\newtheorem{remark}{\it Remark}[section]
\newtheorem{example}{\it Example}[section]
\newtheorem{definition}{\it Definition}[section]
\newtheorem{proposition}{\it Proposition}[section]
\newtheorem{theorem}{\it Theorem}[section]
\newtheorem{lemma}{\it Lemma}[section]
\newtheorem{corollary}{\it Corollary}[section]
\DeclareFontFamily{U}{tipa}{}
\DeclareFontShape{U}{tipa}{m}{n}{<->tipa10}{}
\newcommand{\arc@char}{{\usefont{U}{tipa}{m}{n}\symbol{62}}}%
\newcommand{\arc}[1]{\mathpalette\arc@arc{#1}}
\newcommand{\arc@arc}[2]{%
	\sbox0{$\m@th#1#2$}%
	\vbox{
		\hbox{\resizebox{\wd0}{\height}{\arc@char}}
		\nointerlineskip
		\box0
	}%
}
\newcommand*\curvearc[2][1em]{\overset{\tikz[trim left]\draw[->](0,0)to[bend left]({#1},0);}{#2}}
\algnewcommand{\Inputs}[1]{%
	\State \textbf{Inputs:}
	\Statex \hspace*{\algorithmicindent}\parbox[t]{.8\linewidth}{\raggedright #1}
}
\algnewcommand{\Initialize}[1]{%
	\State \textbf{Initialize:}
	\Statex \hspace*{\algorithmicindent}\parbox[t]{.8\linewidth}{\raggedright #1}
}
\def\ps@pprintTitle{%
	\let\@oddhead\@empty
	\let\@evenhead\@empty
	\def\@oddfoot{}%
	\let\@evenfoot\@oddfoot}
\begin{document}
	
	\title{\textit{Simuorb}: a new method for generating and describing the intersection points of clique-arrangements}
	
	\author[LMPA]{P.~Ryckelynck\corref{cor1}}
	\ead{ryckelyn@lmpa.univ-littoral.fr}
	\author[LMPA]{L.~Smoch}
	\ead{smoch@lmpa.univ-littoral.fr}
	\address[LMPA]{ULCO, LMPA, F-62100 Calais, France\\Univ Lille Nord de France, F-59000 Lille, France. CNRS, FR 2956, France.}
	
	\begin{abstract}
		This work, which may be seen as a companion paper to \cite{RS2}, handles the way the intersection points made by the diagonals of a regular polygon are distributed. It was stated recently by the authors that these points lie exclusively on circles centered on the origin and also the way their respective radii depend on the four indices of the vertices of the initial regular $n$-gon which characterize the two straight lines underlying the intersection points. Because these four vertices are located at preset positions on the  the regular $n$-gon inscribed in the unit circle whose path-length perimeter is constant, it allows the orbits to be characterized by 3 parameters instead of 4, describing roughly the lengths of the paths between the first three vertices, whether the quadrilateral described by these four vertices is simple or complex. 
		This approach enables us to deal with the orbits generated by the clique-arrangement, and to handle their cardinalities as well as the multiplicities of the associated intersection points. A reliable counting-algorithm based on this triplet strategy is provided in order to enumerate the intersection points without generating the associated graph. The orbits being simulated, we call this method \textit{Simuorb}. The procedure is robust, fast and allows a comprehensive understanding of what is happening in a clique-arrangement, whether it contains a large number of points or not.
	\end{abstract}
	
	\begin{keyword}
		Clique-arrangements \sep Geometric graphs \sep Cyclic quadrilaterals \sep Computational methods
		
		\MSC[2020] 51M04 \sep 05C38 \sep 05C12 \sep 05C07 \sep 52-08
		
	\end{keyword}
	
	\maketitle
	
\section{Introduction, notation and historical notice}

The interest aroused by the question of description, distribution and enumeration of intersection points as well as polygons generated from an arrangement of lines (and in particular from the diagonals of a regular $n$-gon) is obvious and many researchers including G. Bol (\cite{Bol}), H. Steinhaus (\cite{St1, St2}), H. Harborth (\cite{Ha1, Ha2}), C.E. Tripp (\cite{Tr}), J.F. Rigby (\cite{Rig}) have focused on this problem.\\
An inspiring paper is that of Poonen and Rubinstein \cite{PR}, in which the authors are the first to provide two remarkable formulas for the number $N_{i,n}$ of \textit{interior intersection points} made by the diagonals of a regular $n$-gon and the associated number of regions.\\
The final outputs of the formulas provided by Poonen and Rubinstein are polynomials on each residue class modulo 2520. Here are the first values of $N_{i,n}$ referenced as A006561 in OEIS \cite{OEIS}.
\begin{center}
	\begin{tabular}{|c|*{13}{c|}}
		\hline
		$n$ & 3 & 4 & 5 & 6 & 7 & 8 & 9 & 10 & 11 & 12 & 13 & 14 & \ldots\\
		\hline
		$N_{i,n}$ & 0 & 1 & 5 & 13 & 35 & 49 & 126 & 161 & 330 & 301 & 715 & 757 & \ldots\\
		\hline
	\end{tabular}
\end{center}
The authors compute naturally the number of regions formed by the diagonals, by using Euler's formula $V-E+F=2$.\\
Nevertheless, the formulas provided by Poonen and Rubinstein in \cite{PR} as remarkable as they are, deal only with the interior intersection points and do not focus on the geometrical way the internal and the external points are distributed. Though, by subtracting the previous values to the total points numbers $N_n$ of the graph (referenced in OEIS as A146212) and the $n$ points on the unit circle, i.e. by calculating $N_{e,n}=N_n-n-N_{i,n}$, we get the sequence of exterior points numbers of the cliques arrangements referenced also in OEIS as A146213 
\begin{center}
	\begin{tabular}{|c|*{13}{c|}}
		\hline
		$n$ & 3 & 4 & 5 & 6 & 7 & 8 & 9 & 10 & 11 & 12 & 13 & 14 & \ldots\\
		\hline
		$N_{e,n}$ & 0 & 0 & 5 & 18 & 49 & 88 & 198 & 300 & 550 & 588 & 1235 & 1414 & \ldots\\
		\hline
	\end{tabular}
\end{center}
This sequence is defined, for $n$ odd given only, through the formula $N_{e,n}=n(2n^3-15n^2+34n-21)/24$. So far, the formula for $n$ even is not known because the starting point of the Poonen and Rubinstein's formula, i.e. the concurrency of diagonals inside the unit circle and the use of similar triangles, does not apply outside the unit circle.\\ 
Concerning the geometrical location of the intersection points, whether interior or exterior, we stated in a recent paper \cite{RS2} that all the intersection points are located on \textit{circular orbits} centered at the origin, and that the radii of these orbits are defined by the function $J_n$ defined below. We propose in this paper to study more precisely the way the points are distributed on these orbits and to explain why their cardinalities and their multiplicities may be different. But before presenting this radius/modulus function, let us provide some notation.\\

Let $n$ be an integer and let $\mathcal{C}_n$ be the regular $n$-gon with vertices $z_m=\exp \left(m\frac{2\pi}{n}I\right)$, where $0\leq m<n$ and $I=\sqrt{-1}$. We use throughout the paper the following definitions and assumptions. For any set $S$, $\sharp(S)$ denotes its cardinality. If $f$ is any mapping,  $\Im(f)$ denotes its range. As usual, if $x\in\mathbb{R}$ then $[x]$ denotes the greatest integer function. Let $i,j,k,\ell$ be four indices in $\llbracket 0,n-1\rrbracket$ with $\sharp\{i,j\}=\sharp\{k,\ell\}=2$. When the two straight-lines $\mathcal{D}_{i,j}$ and $\mathcal{D}_{k,\ell}$ defined respectively by the two couples $(z_i,z_j)$ and $(z_k,z_\ell )$ are secant, we denote $z_{i,j,k,\ell}$ as their intersection point. Let $\mathcal{K}_n$ the \textit{geometric graph} which consists of
the union of the vertices of $\mathcal{C}_n$ and the intersection points $z_{i,j,k,\ell }$, say $V(\mathcal{K}_n)$, together with the sets of straight lines $\mathcal{D}_{i,j}$, say $E(\mathcal{K}_n)$. We partition $\mathcal{K}_n$ in three sets as follows $\mathcal{K}_n=\mathcal{C}_n\cup \mathcal{K}_{e,n}\cup \mathcal{K}_{i,n}$, by defining respectively $z\in \mathcal{K}_{e,n}$ and $z\in\mathcal{K}_{i,n}$ if and only if 
$\left| z\right| >1$ and $\left| z\right| <1$. The arrangement of lines in the plane which consists of the various straight lines $\mathcal{D}_{i,j}$, passing through the points $z_i$ and $z_j $ for all indices $i\neq j$, may be referred to as the \textit{clique-arrangement} constructed from $\mathcal{C}_n$.\\
Let us use the abbreviation $pq_{\pm}=(p\pm q)\frac \pi n$ for convenient integers $p$ and $q$. Although we may use indices modulo $n$, we impose instead the inequalities $0\leq i\leq n-1$ over all subsequent indices.
\\
We will denote by $M_{e,n}$ and $M_{i,n}$ the respective numbers of exterior and interior orbits of the arrangement $\mathcal{K}_n$. We set $M_n=1+M_{e,n}+M_{i,n}$ which stands for the total number of orbits. Likewise, we will denote by $N_{e,n}$ and $N_{i,n}$ the respective cardinalities of the two sets $\mathcal{K}_{e,n}$ and $\mathcal{K}_{i,n}$. Let $N_n=n+N_{e,n}+N_{i,n}$ the whole number of intersection points.\\
In the following table we give for $3\leq n\leq 10$, the values of cardinalities $N_n,N_{e,n},N_{i,n}$, and the numbers of various orbits $M_n,M_{e,n},M_{i,n}$.
\begin{table}[!h]
	{\small\centering
		\begin{tabular}{|c||c|c|c||c|c|c|}
			\hline
			$n$ & $N_n$ & $N_{e,n}$ & $N_{i,n}$ & $M_n$ & $M_{e,n}$ & $M_{i,n}$ \\ \hline
			3 & 3 & 0 & 0 & 1 & 0 & 0 \\ \hline
			4 & 5 & 0 & 1 & 2 & 0 & 1 \\ \hline
			5 & 15 & 5 & 5 & 3 & 1 & 1 \\ \hline
			6 & 37 & 18 & 13 & 6 & 2 & 3 \\ \hline
			7 & 91 & 49 & 35 & 10 & 5 & 4 \\ \hline
			8 & 145 & 88 & 49 & 14 & 7 & 6 \\ \hline
			9 & 333 & 198 & 126 & 25 & 14 & 10 \\ \hline
			10 & 471 & 300 & 161 & 32 & 18 & 13 \\ \hline
		\end{tabular}
		\caption{Numbers of intersection points and orbits for $3\leq n\leq 10$}}
\end{table}

As shown in Figure \ref{fig1}, it becomes very difficult when $n$ becomes large to understand the way the orbits behave, how are they separated from each other and how the intersection points are distributed. It will also be noted that the number of points on each orbit may be different.
\begin{figure}[!ht]
	\begin{center}
		\includegraphics[width=10cm,height=7cm]{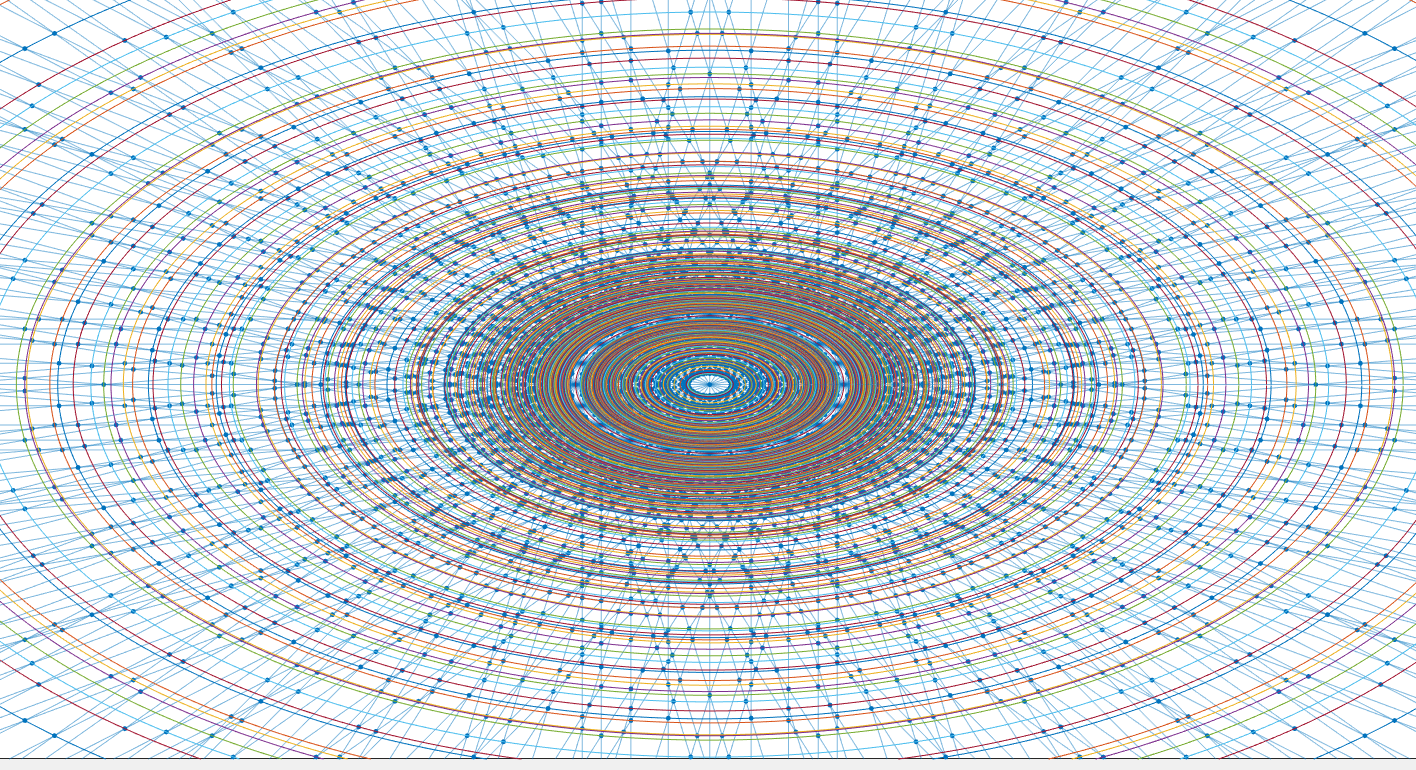}
	\end{center}
	\caption{\centering A partial view of the cyclotomic clique-arrangement for $n=20$ together with its circular orbits}
	\label{fig1}
\end{figure}

Therefore, the aim of this work is to lighten the phenomena underlying the intersection of straight lines generated from the vertices of a regular $n$-gon.\\

The rest of this paper is organized as follows. We introduce in Section 2 the fundamental parameters $p,q,r$ and $s$ which characterize the shape of an inscribed quadrilateral whether it is simple or complex as well as some important properties about these integers. Section 3 is devoted to the square radius $J_n$ of the circular orbits containing all the intersection points of the clique-arrangement and its description with respect to the three paramters $p,q,r$. The fourth section is related to the arc length between two points lying on the same orbit in order to state the exact cardinality of each orbit and to determine the multiplicity of their points. The notion of equivalence between the triplets $(p,q,r)$ is defined thus, allowing to work with the orbits without redundancy. Section 5 deals with the different procedures which make up the body of the algorithm called \textit{Simuorb}. The sixth and last section presents some numerical considerations relative to the efficiency of \textit{Simuorb}.

\section{Triplets to characterize cyclic quadrilaterals}

The description of a vertex $z_{i,j,k,\ell}\in\mathcal{K}_n$ (that we assume of multiplicity 2 for instance) help to $\mathcal{D}_{i,j}$ and $\mathcal{D}_{k,\ell}$ or help to the pair of pairs $\{\{i,j\},\{k,\ell\}\}$ is not unambiguously defined unlike that provided by the quadrilateral $\mathcal{Q}$ formed from the two edges $(z_i,z_j)$ and $(z_k,z_\ell)$, and which may write under 16 different ways by enforcing the vertices $z_i$ and $z_j$ to hold two successive positions in the quadruplet. From now on, throughout the rest of this paper, \textit{we constraint the first two components of any quadrilateral to be associated to a first segment among $[z_i,z_j]$ and $[z_k,z_\ell]$ and the last two components to the remaining one}, which halves the previous number of possibilities. Any inscribed quadrilateral generates in general (if the straight lines are not parallel) two intersection points located outside the unit circle if $\mathcal{Q}$ is simple and on either side of it otherwise. The nature of $\mathcal{Q}$ depends obviously on the way the points $z_i,z_j,z_k$ and $z_\ell$ are distributed on the unit circle but in each case, the 4 quadrilaterals $(z_i,z_j,z_k,z_\ell)$, $(z_k,z_\ell,z_i,z_j)$, $(z_j,z_i,z_\ell,z_k)$ and $(z_\ell,z_k,z_j,z_i)$ are equal. 
\begin{figure}[!ht]
  \tabcolsep=0.2\linewidth
\divide\tabcolsep by 8
\begin{tabular*}{\textwidth}{@{\extracolsep{\fill}}ccc}
	\includegraphics[width=0.30\textwidth]{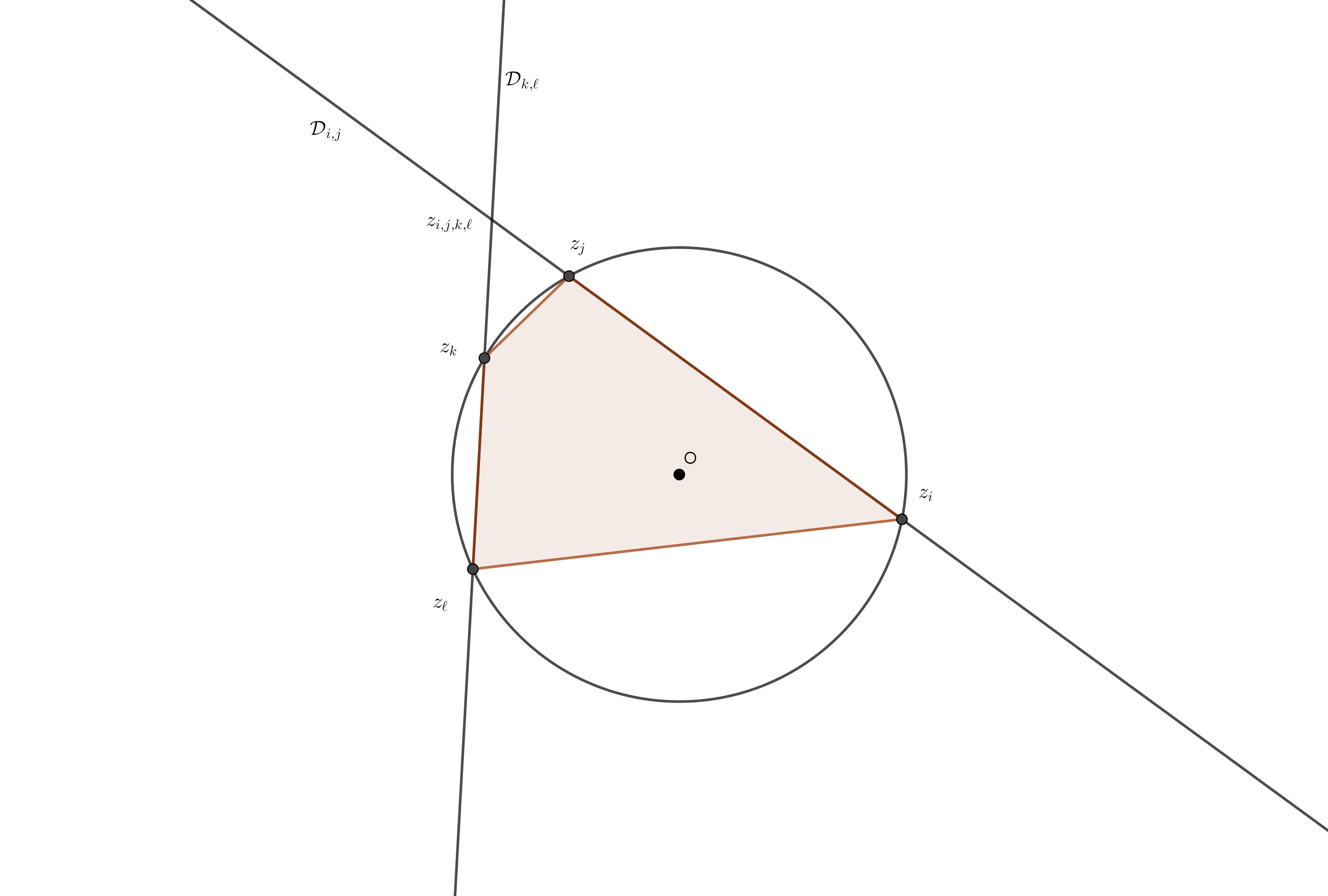} &
	\includegraphics[width=0.30\linewidth]{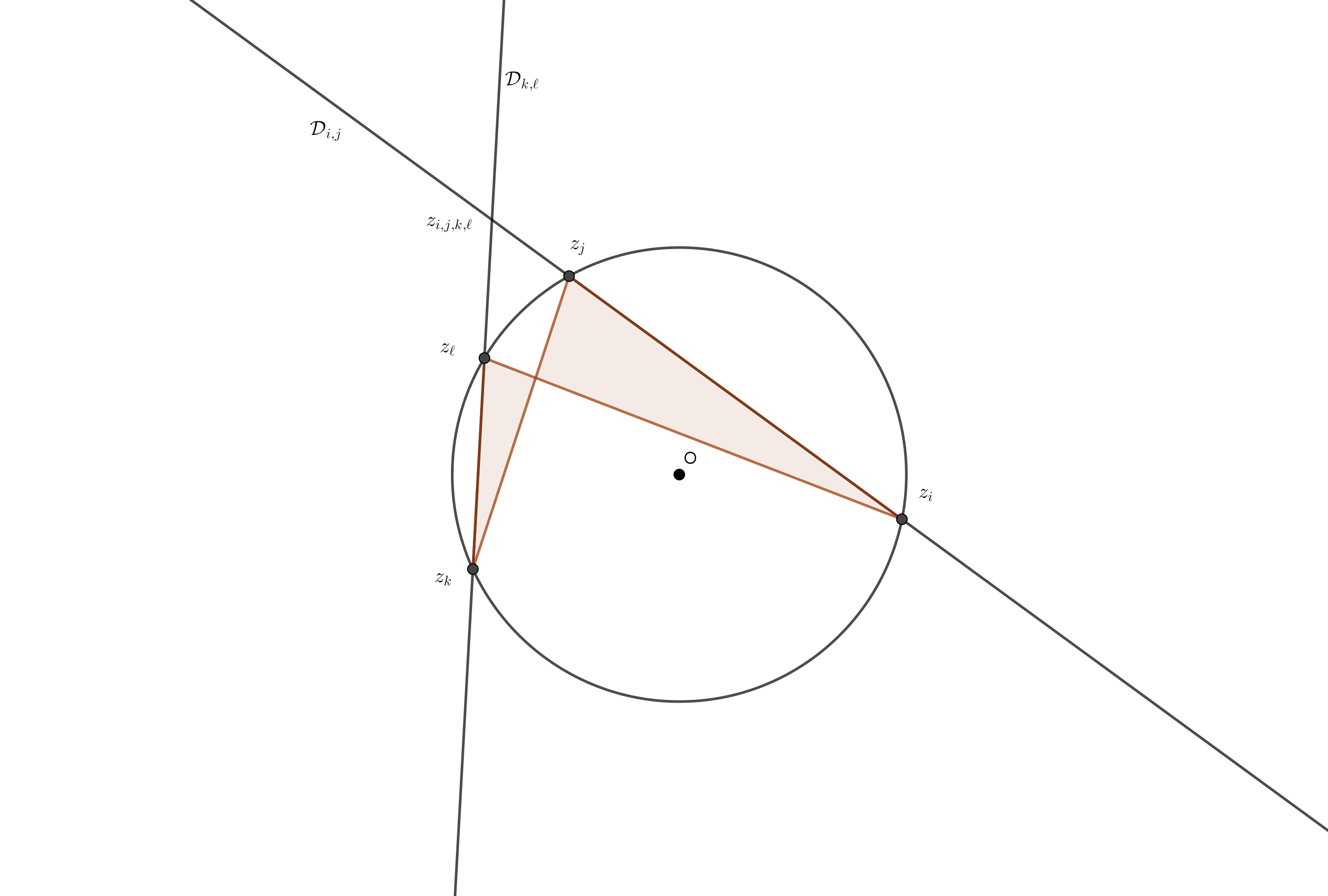} &
	\includegraphics[width=0.30\linewidth]{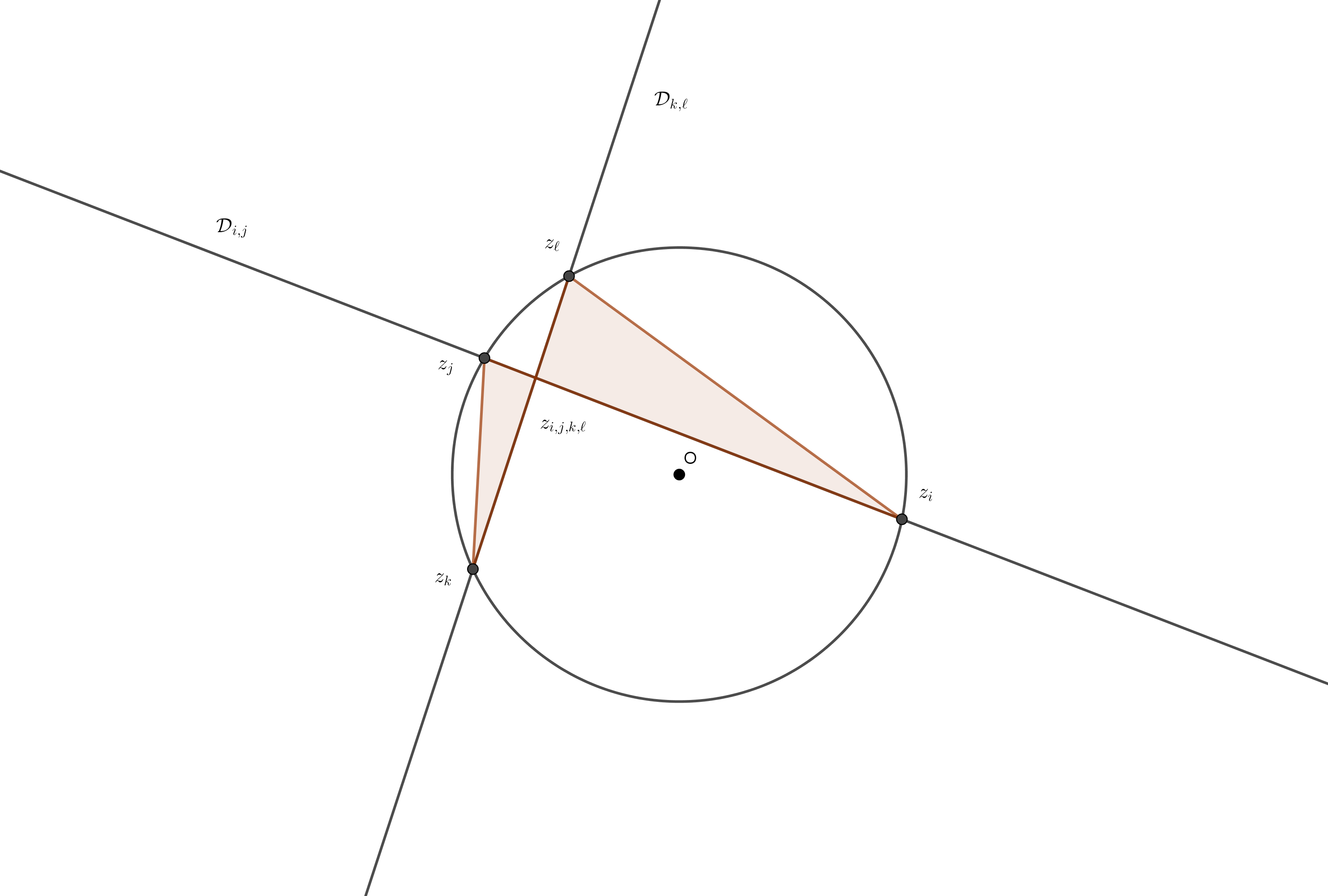}
	\\
	\footnotesize Case 1 & \footnotesize Case 2 & \footnotesize Case 3
\end{tabular*}
\caption{The three configurations generating $z_{i,j,k,\ell}=\mathcal{D}_{i,j}\cap\mathcal{D}_{k,\ell}$ either outside or inside the unit circle}
\label{figquad}
\end{figure}

We present in Figure \ref{figquad} the three ways to generate  $z_{i,j,k,\ell}=\mathcal{D}_{i,j}\cap\mathcal{D}_{k,\ell}$ whether it is outside or inside the unit circle, by considering carefully the condition mentioned previously relative to the location of $z_i$ and $z_j$ in the quadruplet. There is no other possible configuration. Therefore, if we generate all the possible inscribed quadrilaterals $(z_i,z_j,z_k,z_\ell)$, $i,j,k,\ell\in\{0,\ldots,n-1\}$ according to the 3 configurations described in Figure \ref{figquad}, we generate all the intersection points of the clique-arrangement even if there shall be redundancy.\\

Now, in order to get an effective correspondence between the intersection points and the different quadrilaterals, we need to characterize a quadrilateral in another way than just using an ordered constrained sequence of vertices. Because the quadrilateral is inscribed in the unit circle and its vertices are located at preset positions on the regular cyclic $n$-gon, we consider the lengths of the paths between the vertices rather than the vertices themselves or what amounts to the same thing, we consider a shape of quadrilaterals rather than the quadrilateral itself. Because the sum of the 4 path-lengths is equal to $n$, a quadrilateral may be characterized by a triplet of path-lengths whose constrained amplitude will allow to provide help to a triple loop algorithm a process for generating all the orbits of intersection points. \\  

We equip the regular $n$-gon $\mathcal{C}_n$ with the structure of a directed geometric graph, say $\widetilde{\mathcal{C}_n}$, by considering the set of vertices $\{z_m=e^{m\frac{2\pi}{n}I}\}_{m\in\{0,\ldots,n-1\}}$ all located on the unit circle, and the set of directed edges $\{(z_m,z_{m+1})\}_{m\in\{0,\ldots,n-1\}}$ connecting every vertex $z_m$ to its neighbour $z_{m+1}$, indices taken modulo $n$. We shall denote by the way $\curvearc{z_iz_j}$ the polygonal directed path from the vertex $z_i$ to the vertex $z_j$. Next, we define as $\delta$ the mapping which associates to the pair $(i,j)$, $i\neq j$, the length of the path $\curvearc{z_iz_j}$ according to the graph $\widetilde{\mathcal{C}_n}$, i.e.
\begin{equation}
	\delta(i,j)=\left\{\begin{array}{rcl}j-i & \mbox{ if }i<j\\ n+j-i & \mbox{ otherwise}\end{array}\right..
	\label{delta}
\end{equation} 
Clearly, $1\leq\delta(i,j)\leq n-1$. We remark also that $\delta(i,j)+\delta(j,i)=n$ and then, $\delta(i,j)$ and $-\delta(j,i)$ are congruent modulo $n$ without having in general the same absolute value. Consequently, we may define as $\Delta$ the mapping
\begin{equation}
(i,j)\rightarrow\Delta(i,j)=\underset{\alpha\in\{\delta(i,j),-\delta(j,i)\}}{\mbox{argmin}}\{|\alpha|\}.
\label{deltaretrtilde}
\end{equation}
Unlike the mapping $\delta$ which provides only positive path lengths, the mapping $\Delta$ provides the smallest path length whether positive or negative, according to the graph $\widetilde{\mathcal{C}_n}$, between two vertices $z_i$ and $z_j$.\\
\\
Let us recall from \cite{RS2} that we may suppose that the straight lines $\mathcal{D}_{i,j}$ and $\mathcal{D}_{k,\ell}$ are not vertical nor parallel and thus, that none of the three integers $i+j$, $k+\ell$ and $i+j-(k+\ell)$ are equal to 0 modulo $n$. Moreover, $i\neq j$, $k\neq\ell$ and $\sharp(\{i,j,k,\ell\})\geq 3$.\\
We have the following result, the proof of which is obvious.
\begin{lemma}
	The vertex $z_{i,j,k,\ell}\in V(\mathcal{K}_n)$ does not lie on $\mathcal{C}_n$ if and only if $\sharp(\{i,j,k,\ell\})=4$.
	\label{lemCard3}
\end{lemma}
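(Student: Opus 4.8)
The plan is to dispatch both implications by elementary incidence geometry, using only that the four vertices $z_i,z_j,z_k,z_\ell$ lie on the unit circle $\mathcal{C}_n$ and that a line meets a circle in at most two points. Recall that under the standing assumptions we always have $\sharp(\{i,j,k,\ell\})\geq 3$, so proving ``$z_{i,j,k,\ell}\notin\mathcal{C}_n\iff\sharp(\{i,j,k,\ell\})=4$'' amounts to proving the two implications $\sharp=4\Rightarrow z_{i,j,k,\ell}\notin\mathcal{C}_n$ and $\sharp=3\Rightarrow z_{i,j,k,\ell}\in\mathcal{C}_n$.

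For the first implication I would argue by contradiction: suppose $\sharp(\{i,j,k,\ell\})=4$ yet $z_{i,j,k,\ell}=z_m$ for some $m\in\llbracket 0,n-1\rrbracket$. Since $z_m$ lies both on the unit circle and on the chord-line $\mathcal{D}_{i,j}$, and since $\mathcal{D}_{i,j}$ is the line through the two distinct points $z_i\neq z_j$ of $\mathcal{C}_n$ — hence meets $\mathcal{C}_n$ in exactly $\{z_i,z_j\}$, a line and a circle having at most two common points — we get $m\in\{i,j\}$. The same reasoning applied to $\mathcal{D}_{k,\ell}$ yields $m\in\{k,\ell\}$, so $m\in\{i,j\}\cap\{k,\ell\}$; but $\sharp(\{i,j,k,\ell\})=4$ forces $\{i,j\}\cap\{k,\ell\}=\emptyset$, a contradiction.

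For the converse, if $\sharp(\{i,j,k,\ell\})=3$ then, because $i\neq j$ and $k\neq\ell$, exactly one index of $\{i,j\}$ coincides with one index of $\{k,\ell\}$; relabelling within the allowed rewritings of the quadruplet, we may take this common vertex to be $z_i$. Then both lines $\mathcal{D}_{i,j}$ and $\mathcal{D}_{k,\ell}$ pass through $z_i$, and since they are assumed secant (hence distinct) their unique intersection point must be $z_i$, so $z_{i,j,k,\ell}=z_i\in\mathcal{C}_n$. There is essentially no obstacle in this argument; the only point deserving a word of care is the observation that $\mathcal{D}_{i,j}$ meets $\mathcal{C}_n$ in no point other than $z_i$ and $z_j$, which is immediate, and this is precisely why the authors describe the proof as obvious.
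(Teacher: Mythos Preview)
Your argument is correct and is precisely the elementary line--circle incidence reasoning the authors have in mind when they declare the proof obvious; indeed the paper gives no proof at all beyond that remark. There is nothing to add or amend.
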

From now on, we suppose that $\sharp(\{i,j,k,\ell\})=4$. Given a pair of pairs $\mathcal{P}=\{\{i,j\},\{k,\ell\}\}$, we may consider the directed cycles along the regular $n$-gon $\mathcal{C}_n$ preserving the connectivity encoded by $\mathcal{P}$. By using the definitions of simple and complex quadrilaterals, we have the following lemma which is straightforward.
\begin{proposition}
	Let us consider $\mathcal{P}$ as well as the quadrilateral $\mathcal{Q}$ associated to the path $\Gamma=(z_i,z_j,z_k,z_\ell,z_i)$ in $\widetilde{\mathcal{C}}_n$. Then, $\mathcal{Q}$ is simple if and only 
	\begin{center}
	$\sharp\left(\curvearc{z_i z_j}\cap\{z_k,z_\ell\}\right)=\sharp\left(\curvearc{z_k z_\ell}\cap\{z_i,z_j\}\right)=0\mbox{ or }2$
	\end{center}
	while $\mathcal{Q}$ is complex if and only if 
	\begin{center}
		$\sharp\left(\curvearc{z_i z_j}\cap\{z_k,z_\ell\}\right)=\sharp\left(\curvearc{z_k z_\ell}\cap\{z_i,z_j\}\right)=1$ or $\sharp\left(\curvearc{z_i z_j}\cap\{z_k,z_\ell\}\right)\neq \sharp\left(\curvearc{z_k z_\ell}\cap\{z_i,z_j\}\right)$.
	\end{center} 
	Moreover,
	\begin{itemize}
		\item If $\sharp\left(\curvearc{z_i z_j}\cap\{z_k,z_\ell\}\right)=\sharp\left(\curvearc{z_k z_\ell}\cap\{z_i,z_j\}\right)=1$, the point $z_{i,j,k,\ell}$ is inside the unit circle.
		\item If $\sharp\left(\curvearc{z_i z_j}\cap\{z_k,z_\ell\}\right)\neq \sharp\left(\curvearc{z_k z_\ell}\cap\{z_i,z_j\}\right)$, the point $z_{i,j,k,\ell}$ is outside the unit circle.
	\end{itemize}
	\label{prop2.1}
\end{proposition}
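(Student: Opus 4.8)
The plan is to translate the combinatorial hypotheses on the directed arcs into the classical notion of a simple versus self-intersecting (complex) inscribed quadrilateral, and then to read off the position of the intersection point relative to the unit circle from the crossing pattern of the two chords. First I would fix the cyclic order of the four distinct indices $i,j,k,\ell$ around $\mathcal{C}_n$; since $\sharp(\{i,j,k,\ell\})=4$, the arc $\curvearc{z_iz_j}$ either contains none of $z_k,z_\ell$, exactly one of them, or both, and similarly for $\curvearc{z_kz_\ell}$ with respect to $\{z_i,z_j\}$. The key observation is that $\curvearc{z_iz_j}$ and $\curvearc{z_jz_i}$ partition the remaining two points, so $\sharp(\curvearc{z_iz_j}\cap\{z_k,z_\ell\})\in\{0,1,2\}$, and the value $1$ is exactly the situation in which the chord $[z_i,z_j]$ separates $z_k$ from $z_\ell$, i.e. the two chords $[z_i,z_j]$ and $[z_k,z_\ell]$ genuinely cross inside the disk.

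Next I would establish the equivalence for simplicity. The path $\Gamma=(z_i,z_j,z_k,z_\ell,z_i)$ is a simple closed polygon precisely when consecutive edges $[z_i,z_j]$ and $[z_k,z_\ell]$ (the two ``opposite'' edges of $\Gamma$ are $[z_i,z_j]$–$[z_k,z_\ell]$ and $[z_j,z_k]$–$[z_\ell,z_i]$) do not cross in their interiors. Using the separation criterion above, $[z_i,z_j]$ and $[z_k,z_\ell]$ fail to cross iff $z_k,z_\ell$ lie on the same side of chord $[z_i,z_j]$, i.e. iff $\sharp(\curvearc{z_iz_j}\cap\{z_k,z_\ell\})\in\{0,2\}$; and a short check of the cyclic orderings shows that in those two cases the other pair of opposite edges also does not cross, whereas the value $1$ forces a crossing of one pair of opposite edges. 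Hence $\mathcal{Q}$ is simple iff both cardinalities equal $0$ or both equal $2$ (and these two cardinalities are automatically equal in the simple case by a parity/cyclic-order argument), and $\mathcal{Q}$ is complex in the complementary cases, namely both equal $1$, or the two cardinalities differ.

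For the ``Moreover'' part I would argue geometrically on which chords cross. If $\sharp(\curvearc{z_iz_j}\cap\{z_k,z_\ell\})=\sharp(\curvearc{z_kz_\ell}\cap\{z_i,z_j\})=1$, then by the separation criterion the chord $[z_i,z_j]$ separates $z_k$ from $z_\ell$ and vice versa, so the two chords $[z_i,z_j]$ and $[z_k,z_\ell]$ themselves intersect, and that intersection point is an interior point of the disk since it is an interior point of both chords; as $z_{i,j,k,\ell}=\mathcal{D}_{i,j}\cap\mathcal{D}_{k,\ell}$ it coincides with this point, so $|z_{i,j,k,\ell}|<1$. If the two cardinalities differ, then the chords $[z_i,z_j]$ and $[z_k,z_\ell]$ do not cross inside the disk, but the quadrilateral is still complex, which means the self-intersection of $\Gamma$ occurs on the other pair of opposite edges, $[z_j,z_k]$ and $[z_\ell,z_i]$; consequently the lines $\mathcal{D}_{i,j}$ and $\mathcal{D}_{k,\ell}$ meet at a point lying beyond an endpoint of one of the chords, hence outside the disk, giving $|z_{i,j,k,\ell}|>1$. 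Here a clean way to see the ``outside'' conclusion is the power-of-a-point identity: for a point $w$ on line $\mathcal{D}_{i,j}$ written via the secant through $z_i,z_j$, one has $|w|^2-1 = \overline{w z_i}\cdot\overline{w z_j}$ up to sign, so $|w|>1$ iff $w$ lies outside the chord segment $[z_i,z_j]$, and likewise for $[z_k,z_\ell]$; the differing-cardinality case is exactly the one in which $z_{i,j,k,\ell}$ lies outside both chord segments.

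The main obstacle I expect is the bookkeeping in the ``complex with differing cardinalities'' case: one has to be careful that the two edges that actually cross are $[z_j,z_k]$ and $[z_\ell,z_i]$ rather than the pair we started from, and to verify in all the cyclic sub-orderings that this is consistent with $z_{i,j,k,\ell}$ landing outside the unit circle. I would handle this by enumerating the essentially distinct cyclic arrangements of $i,j,k,\ell$ (there are only a few up to the rotational and reflective symmetries and up to the allowed relabelings $(z_i,z_j,z_k,z_\ell)\sim(z_k,z_\ell,z_i,z_j)\sim(z_j,z_i,z_\ell,z_k)\sim(z_\ell,z_k,z_j,z_i)$ noted before the lemma) and checking each against Figure \ref{figquad}, which already exhibits the three representative configurations and thereby shortcuts most of the casework.
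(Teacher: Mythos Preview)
Your proposal is correct and follows essentially the same geometric route as the paper: both arguments hinge on the separation criterion that the chord $[z_i,z_j]$ separates $z_k$ from $z_\ell$ exactly when $\sharp(\curvearc{z_iz_j}\cap\{z_k,z_\ell\})=1$, and then read off the interior/exterior position of $z_{i,j,k,\ell}$ from whether the two chords actually cross. Your write-up is in fact more careful than the paper's (which is quite terse and contains what appears to be a slip in the last line, writing ``inside'' where ``outside'' is meant), and your power-of-a-point remark is a clean extra justification that the paper does not invoke.
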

\begin{proof}
	When $\mathcal{Q}=(z_i,z_j,z_k,z_\ell)$ is simple, the straight lines $\mathcal{D}_{i,j}$ and $\mathcal{D}_{k,\ell}$ cross outside the unit circle  (as well as the straight lines $\mathcal{D}_{j,k}$ and $\mathcal{D}_{\ell,i}$). Then it excludes the case $\sharp\left(\curvearc{z_i z_j}\cap\{z_k,z_\ell\}\right)=\sharp\left(\curvearc{z_k z_\ell}\cap\{z_i,z_j\}\right)=1$. Let us assume that  $\sharp\left(\curvearc{z_i z_j}\cap\{z_k,z_\ell\}\right)\neq \sharp\left(\curvearc{z_k z_\ell}\cap\{z_i,z_j\}\right)$. If $\sharp\left(\curvearc{z_i z_j}\cap\{z_k,z_\ell\}\right)=0$ then $\sharp\left(\curvearc{z_k z_\ell}\cap\{z_i,z_j\}\right)=2$ which means that $\mathcal{Q}$ is complex which is impossible, the same conclusion holding if $\sharp\left(\curvearc{z_i z_j}\cap\{z_k,z_\ell\}\right)=2$. 
	By contraposition we get the second result. In this last case, if $\sharp\left(\curvearc{z_i z_j}\cap\{z_k,z_\ell\}\right)=\sharp\left(\curvearc{z_k z_\ell}\cap\{z_i,z_j\}\right)=1$ then $z_k$ and $z_\ell$ are on both sides of the straight line formed by $z_i$ and $z_j$. Then $\mathcal{D}_{i,j}$ and $\mathcal{D}_{k,\ell}$ cross inside the unit circle. If $\sharp\left(\curvearc{z_i z_j}\cap\{z_k,z_\ell\}\right)\neq \sharp\left(\curvearc{z_k z_\ell}\cap\{z_i,z_j\}\right)$, $\mathcal{D}_{i,j}$ and $\mathcal{D}_{k,\ell}$ cross inside the unit circle which ends the proof.
\end{proof}

\begin{definition}
	When the quadrilateral is simple, we may rearrange (if necessary) the path $\Gamma=(z_i,z_j,z_k,z_\ell,z_i)$ as a new path $\Gamma=(z_{i'},z_{j'},z_{k'},z_{\ell'},z_{i'})$ such that
	\begin{center}
		$\{i,j\}=\{i',j'\},~~\{k,\ell\}=\{k',\ell'\},~~\sharp\left(\curvearc{z_{i'}z_{j'}}\cap\{z_{k'},z_{\ell'}\}\right)=\sharp\left(\curvearc{z_{k'}z_{\ell'}}\cap\{z_{i'},z_{j'}\}\right)=0$.
	\end{center}
	Such a quadruplet $(i',j',k',\ell')$ is called admissible. 
	\label{defadm}
\end{definition}
\noindent This being done, the four directed paths $\curvearc{z_{i'}z_{j'}}$, $\curvearc{z_{j'}z_{k'}}$, $\curvearc{z_{k'}z_{\ell'}}$ and $\curvearc{z_{\ell'}z_{i'}}$ have trivial intersections at the vertices. No such a criterium for quadruplets associated to complex quadrangles does exist and we will say in that case that such any quadruplet is naturally admissible. Then, when the quadrilateral is complex, we define somewhat arbitrarily $(i',j',k',\ell')=(i,j,k,\ell)$.
\begin{remark}
	For any admissible quadruplet, with notation of Figure \ref{figquad}, we have the following correspondences:
	\begin{itemize}
		\item Case 1: $\sharp\left(\curvearc{z_i z_j}\cap\{z_k,z_\ell\}\right)=\sharp\left(\curvearc{z_k z_\ell}\cap\{z_i,z_j\}\right)=0$,
		\item Case 2: $\sharp\left(\curvearc{z_i z_j}\cap\{z_k,z_\ell\}\right)\neq \sharp\left(\curvearc{z_k z_\ell}\cap\{z_i,z_j\}\right)$,
		\item Case 3: $\sharp\left(\curvearc{z_i z_j}\cap\{z_k,z_\ell\}\right)=\sharp\left(\curvearc{z_k z_\ell}\cap\{z_i,z_j\}\right)=1$.
	\end{itemize}
	\label{remcase}
\end{remark}
\begin{lemma}
Let $(i,j,k,\ell)$ be an admissible quadruplet and $\mathcal{Q}$ its associated quadrilateral. Then $\mathcal{Q}$ is simple if and only if
	\begin{equation}
		\delta(i,j)+\delta(j,k)+\delta(k,\ell)+\delta(\ell,i)=n 
		\label{longueursimple}
	\end{equation}
	while $\mathcal{Q}$ is a complex quadrilateral if and only if
	\begin{equation}
		\delta(i,j)+\delta(j,k)+\delta(k,\ell)+\delta(\ell,i)=2n.
		\label{longueurcomplexe}
	\end{equation}
\label{lemma2.2}
\end{lemma}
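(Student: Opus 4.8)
The plan is to translate the two claimed equalities into statements about closed walks in the directed cycle $\widetilde{\mathcal{C}}_n$. First I would observe that concatenating the four directed paths $\curvearc{z_iz_j}\,\curvearc{z_jz_k}\,\curvearc{z_kz_\ell}\,\curvearc{z_\ell z_i}$ produces a closed directed walk based at $z_i$; since every vertex of $\widetilde{\mathcal{C}}_n$ has the single out-edge $z_m\to z_{m+1}$, a walk of length $L$ starting at $z_i$ ends at $z_{i+L}$, hence is closed exactly when $n\mid L$. Therefore $L:=\delta(i,j)+\delta(j,k)+\delta(k,\ell)+\delta(\ell,i)$ is a positive multiple of $n$, and because $1\le\delta(\cdot,\cdot)\le n-1$ we have $4\le L\le 4n-4<4n$, so $L\in\{n,2n,3n\}$. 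It then suffices to pin down which value occurs for simple and which for complex quadrilaterals.

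The key intermediate fact I would isolate is: $L=n$ if and only if the vertices $z_i,z_j,z_k,z_\ell$ occur in exactly this order around the positively oriented unit circle. Indeed, a closed walk of length $n$ based at $z_i$ in $\widetilde{\mathcal{C}}_n$ must be $z_i\to z_{i+1}\to\cdots\to z_{i+n}=z_i$, so it reaches $z_j$, $z_k$, $z_\ell$ after $\delta(i,j)$, $\delta(i,j)+\delta(j,k)$ and $\delta(i,j)+\delta(j,k)+\delta(k,\ell)$ steps, three strictly increasing integers in $(0,n)$, which is precisely the assertion on the cyclic order; conversely, if the cyclic order is $i,j,k,\ell$, the four directed arcs are the four consecutive gaps and their lengths add up to $n$. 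Moreover, when this cyclic order holds, $\mathcal{Q}$ is the convex quadrilateral inscribed in the unit circle with vertices taken in convex-position order, hence simple.

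Next I would treat the two directions. If $\mathcal{Q}$ is simple, the admissibility hypothesis (Definition~\ref{defadm}) gives $\sharp(\curvearc{z_iz_j}\cap\{z_k,z_\ell\})=\sharp(\curvearc{z_kz_\ell}\cap\{z_i,z_j\})=0$, and a short argument turns this into the cyclic order being $i,j,k,\ell$: the positive arc from $z_i$ to $z_j$ contains neither $z_k$ nor $z_\ell$, so both lie on the complementary arc, and if there $z_\ell$ preceded $z_k$ then $\curvearc{z_kz_\ell}$ would sweep past both $z_i$ and $z_j$, contradicting the second equality; hence $L=n$ by the intermediate fact. For the complex case, $L\ne n$ (else $\mathcal{Q}$ would be convex, hence simple), and $L\ne 3n$, because the reversed walk $\curvearc{z_iz_\ell}\,\curvearc{z_\ell z_k}\,\curvearc{z_kz_j}\,\curvearc{z_jz_i}$ has length $4n-L$ (using $\delta(a,b)+\delta(b,a)=n$ on each term), so $L=3n$ would force $z_i,z_\ell,z_k,z_j$ into positive cyclic order, making the quadrilateral with edge set $\{z_iz_\ell,z_\ell z_k,z_kz_j,z_jz_i\}$ convex; but that edge set equals $\{z_iz_j,z_jz_k,z_kz_\ell,z_\ell z_i\}$, i.e. it is $\mathcal{Q}$ itself, so $\mathcal{Q}$ would be simple. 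Hence $L=2n$. The converses are then immediate: every inscribed quadrilateral is simple or complex, these cases yield $L=n$ and $L=2n$ respectively, and $n\ne 2n$, so $L=n$ forces simplicity and $L=2n$ forces complexity (the value $3n$ never occurring under the admissibility hypothesis).

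The main obstacle, and the only step requiring genuine care, is the bookkeeping that converts the combinatorial equalities of Definition~\ref{defadm} into the clean statement ``$z_i,z_j,z_k,z_\ell$ are in positive cyclic order''; everything else is either the closed-walk observation or the convexity of a cyclic polygon. A convenient device there is to normalise $i=0$ using the rotation invariance $\delta(i+t,j+t)=\delta(i,j)$ and reason directly with the integers $j,k,\ell\in\{1,\dots,n-1\}$.
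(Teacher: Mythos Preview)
Your argument is correct and takes a genuinely different route from the paper. You first observe that the concatenated walk is closed in $\widetilde{\mathcal{C}}_n$, forcing $L\in\{n,2n,3n\}$, and then eliminate the unwanted values: admissibility in the simple case pins down the cyclic order $i,j,k,\ell$ (hence $L=n$), while in the complex case you rule out $L=n$ and $L=3n$ by the reflection $L\mapsto 4n-L$ together with the fact that either would force $\mathcal{Q}$ to be convex, hence simple. The paper, by contrast, does not invoke the congruence $L\equiv 0\pmod n$ at all: for simple $\mathcal{Q}$ it appeals (as you do, in different words) to the Hamiltonian nature of the path, but for complex $\mathcal{Q}$ it performs a direct case-by-case computation, decomposing each $\delta(\cdot,\cdot)$ into sums along the actual cyclic order and checking by hand that the total is $2n$ in both subcases of Proposition~\ref{prop2.1}. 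Your approach is cleaner and more symmetric, and it makes transparent why admissibility is needed (to exclude $L=3n$, which does occur for non-admissible simple quadruplets); the paper's approach, while more laborious, has the advantage of exhibiting explicitly how the paths overlap, which feeds directly into the proof of the next lemma (Lemma~\ref{lemma2.3}) on the cardinality of $\mathcal{S}$.
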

\begin{proof}
	In the case of an admissible quadruplet $(i,j,k,\ell)$ and its associated simple quadrilateral, the path $\Gamma=(z_i,z_j,z_k,z_\ell,z_i)$ is a  Hamiltonian cycle since it consists of a sequence of adjacent and distinct vertices in $\mathcal{C}_n$, which starts and ends at the same vertex $z_{i}$ without repeated edges. Therefore, according to $\widetilde{\mathcal{C}_n}$, its length is equal to $n$ and the equality (\ref{longueursimple}) holds. When $\mathcal{Q}$ is a complex quadrilateral, $\Gamma$ is not anymore a  Hamiltonian cycle. Two cases may occur. First, if $\sharp\left(\curvearc{z_i z_j}\cap\{z_k,z_\ell\}\right)=0$ and $\sharp\left(\curvearc{z_k z_\ell}\cap\{z_i,z_j\}\right)=2$, then 
	\begin{center}
		$\delta(i,j)+\delta(j,k)+\delta(k,\ell)+\delta(\ell,i)=\delta(i,j)+(\delta(j,\ell)+\delta(\ell,k))+(\delta(k,i)+\delta(i,j)+\delta(j,\ell))+(\delta(\ell,k)+\delta(k,i))$\\
		$=2(\delta(i,j)+\delta(j,\ell)+\delta(\ell,k)+\delta(k,i))=2n$.
	\end{center}
	The same result holds when $\sharp\left(\curvearc{z_i z_j}\cap\{z_k,z_\ell\}\right)=2$ and $\sharp\left(\curvearc{z_k z_\ell}\cap\{z_i,z_j\}\right)=0$.
	Second, the vertices $z_{k}$ and $z_{\ell}$ are on both sides of the straight line formed by $z_{i}$ and $z_{j}$. If $z_{k}\in\curvearc{z_{i}z_{j}}$ and $z_{\ell}\in\curvearc{z_{j}z_{i}}$ we have $\delta(i,j)=\delta(i,k)+\delta(k,j)=\delta(i,k)+n-\delta(j,k)$ and $\delta(k,\ell)=n-\delta(\ell,k)=n-(\delta(\ell,i)+\delta(i,k))$. Thus, 
	\begin{center}
		$\delta(i,j)+\delta(j,k)+\delta(k,\ell)+\delta(\ell,i)=(\delta(i,k)+n-\delta(j,k))+\delta(j,k)+(n-(\delta(\ell,i)+\delta(i,k)))+\delta(\ell,i)=2n$.
	\end{center}
	If $z_{k}\in\curvearc{z_{j}z_{i}}$ and $z_{\ell}\in\curvearc{z_{i}z_{j}}$, we proceed exactly in the same way which ends the proof.
\end{proof}
As a consequence of this lemma, we may state the following result.
\begin{lemma}
	Let $(i,j,k,\ell)$ be an admissible quadruplet and $\mathcal{Q}=(z_{i},z_{j},z_{k},z_{\ell})$ its associated quadrilateral. We set 
	\begin{center}
		$\mathcal{S}=\{(a,b)\in\{(i,j),(j,k),(k,\ell),(\ell,i)\},~\delta(a,b)=n+b-a\}$.
	\end{center}
	 Then $\mathcal{Q}$ is simple (resp. complex) if and only if $\sharp(\mathcal{S})=1$ (resp. $\sharp(\mathcal{S})=2$).
	\label{lemma2.3}
\end{lemma}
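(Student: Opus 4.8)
The plan is to reduce everything to the arithmetic identity established in Lemma \ref{lemma2.2}. Recall that for any pair $(a,b)$ with $a\neq b$, the definition \eqref{delta} gives $\delta(a,b)=b-a$ if $a<b$ and $\delta(a,b)=n+b-a$ otherwise; thus $\mathcal{S}$ is precisely the set of consecutive pairs in the cyclic sequence $(i,j,k,\ell,i)$ at which the index ``wraps around'' (i.e. decreases). First I would write the telescoping sum
\begin{equation*}
(j-i)+(k-j)+(\ell-k)+(i-\ell)=0,
\end{equation*}
and observe that $\delta(i,j)+\delta(j,k)+\delta(k,\ell)+\delta(\ell,i)$ is obtained from this sum by adding $n$ once for each pair $(a,b)$ belonging to $\mathcal{S}$. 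Hence
\begin{equation*}
\delta(i,j)+\delta(j,k)+\delta(k,\ell)+\delta(\ell,i)=n\cdot\sharp(\mathcal{S}).
\end{equation*}

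Next I would invoke Lemma \ref{lemma2.2}: since $(i,j,k,\ell)$ is admissible, $\mathcal{Q}$ is simple if and only if this sum equals $n$, and complex if and only if it equals $2n$. Combining with the displayed identity, $\mathcal{Q}$ is simple $\iff \sharp(\mathcal{S})=1$ and complex $\iff \sharp(\mathcal{S})=2$. To make the argument airtight I would also note the two extreme cases cannot occur: $\sharp(\mathcal{S})=0$ would force all four inequalities $i<j<k<\ell<i$, which is impossible for a cyclic chain, and $\sharp(\mathcal{S})\in\{3,4\}$ is ruled out symmetrically (its complement would again force an impossible monotone chain); alternatively, these cases are excluded directly because $\delta$ takes values in $\llbracket 1,n-1\rrbracket$, so the sum lies strictly between $0$ and $4n$, and Lemma \ref{lemma2.2} already tells us it is exactly $n$ or $2n$, so $\sharp(\mathcal{S})\in\{1,2\}$.

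There is essentially no obstacle here: the statement is a direct repackaging of Lemma \ref{lemma2.2}, the only content being the elementary counting observation that each occurrence of the ``otherwise'' branch of \eqref{delta} contributes exactly one extra $n$ to the cyclic sum while the linear parts telescope to zero. The one point deserving a word of care is confirming that $\mathcal{S}$ is well-defined as a \emph{set} of pairs (not a multiset) — this is immediate since the four pairs $(i,j),(j,k),(k,\ell),(\ell,i)$ are pairwise distinct because $\sharp(\{i,j,k,\ell\})=4$ — and that for each such pair the membership condition $\delta(a,b)=n+b-a$ is unambiguous, which holds because $a\neq b$ so exactly one of the two branches of \eqref{delta} applies. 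With these remarks the proof is complete in a few lines.
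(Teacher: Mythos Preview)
Your proof is correct and follows essentially the same route as the paper: both reduce the statement to Lemma~\ref{lemma2.2} by recognising that each pair in $\mathcal{S}$ corresponds to one ``wrap-around'' contributing an extra $n$ to the cyclic sum of the $\delta$'s. The paper phrases this geometrically (each pair $(a,b)\in\mathcal{S}$ is one for which the directed arc $\curvearc{z_az_b}$ passes through $z_0$, so $\sharp(\mathcal{S})$ counts how many times the closed path traverses $z_0$), whereas you give the equivalent and slightly more explicit arithmetic formulation via the telescoping identity $\sum\delta = n\cdot\sharp(\mathcal{S})$; the content is the same.
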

\begin{proof}
	The proof is straightforward. Let us mention first that $\delta(a,b)=n+b-a$ if and only if $z_a$ and $z_b$ are on either side of $z_0$. Then, by using the results of Lemma \ref{lemma2.2}, we pass through the point $z_0$ once when $\mathcal{Q}$ is simple, while $z_0$ is crossed twice when $\mathcal{Q}$ is complex.
\end{proof}
\begin{example} Let us consider the case $n=12$ and the vertices $z_3$, $z_4$, $z_5$ and $z_7$. For example, the vertex $z_{3,4,5,7}$ located outside the unit circle may be characterized through the quadruplets $(3,4,5,7)$, $(4,3,7,5)$, $(5,7,3,4)$, $(7,5,4,3)$, $(3,4,7,5)$, $(4,3,5,7)$, $(5,7,4,3)$ and $(7,5,3,4)$, the first four being associated to simple quadrilaterals and the last four to complex ones. Among those associated to simple quadrilaterals, the quadruplets $(3,4,5,7)$ and $(5,7,3,4)$ are the only ones admissible. Indeed, for these two quadruplets, $\sharp(\mathcal{S})=1$, $\sharp\left(\curvearc{z_{3}z_{4}}\cap\{z_5,z_7\}\right)=\sharp\left(\curvearc{z_{5}z_{7}}\cap\{z_3,z_4\}\right)= 0$ and  $\sharp\left(\curvearc{z_{5}z_{7}}\cap\{z_3,z_4\}\right)=\sharp\left(\curvearc{z_{3}z_{4}}\cap\{z_5,z_7\}\right)= 0$. The length of the paths associated to these two quadruplets is equal to $n=12$. On the other hand, the vertex $z_{4,7,3,5}$ located inside the unit circle is characterized by the quadruplets $(4,7,3,5)$, $(4,7,5,3)$, $(7,4,3,5)$, $(7,4,5,3)$, $(3,5,4,7)$, $(3,5,7,4)$, $(5,3,4,7)$ and $(5,3,7,4)$ which are all necessarily associated to complex quadrilaterals and are all admissible. Regarding the quadrangle $(z_4,z_7,z_3,z_5)$ for example, we note that $\delta(4,7)+\delta(7,3)+\delta(3,5)+\delta(5,4)=3+8+2+11=24$ and $\sharp(\mathcal{S})=2$.
\end{example}
Now that we have detailed some characteristics of quadrilaterals, we may propose a way to characterize them other than by their vertices.
\begin{definition}
If $(i,j,k,\ell)$ denotes an admissible quadruplet and $\mathcal{Q}=(z_{i},z_{j},z_{k},z_{\ell})$ is its associated quadrilateral, we set
\begin{equation}
	p=\delta(i,j),\qquad q=\delta(k,\ell),\qquad
	r=\left\{\begin{array}{rcl}
		\delta(j,k) & \mbox{if} & \sharp(\mathcal{S})=1\\
		\Delta(j,k) & \mbox{if} & \sharp(\mathcal{S})=2
		\end{array}\right.,\qquad s=n-(p+q+r).
	\label{defpqrs}		
\end{equation}
\label{ddefpqrs}
\end{definition}
While $p$ and $q$ inform us about the path-lengths of the segments $[z_i,z_j]$ and $[z_k,z_\ell]$, the parameter $r$ provides the relative smallest path-length between $z_j$ and $z_k$ and therefore an indication on the gap between the two previous segments. As a consequence of these definitions we have the following result.
\begin{lemma}
Let be given 4 integers $i,p,q,r$ with $1\leq p,q\leq n-1$. The solution of equation (\ref{defpqrs}) is as follows
\begin{center}
	$(i,j,k,\ell)=(i,i+p,i+p+r,i+p+q+r)$.
\end{center}
\label{lemma2.4}
\end{lemma}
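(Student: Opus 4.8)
The plan is to verify directly that the quadruplet $(i,j,k,\ell)=(i,i+p,i+p+r,i+p+q+r)$ (indices understood modulo $n$) satisfies the defining system (\ref{defpqrs}), and then to argue uniqueness. First I would treat the case $\sharp(\mathcal{S})=1$, i.e.\ the simple quadrilateral. Setting $j=i+p$, we immediately get $\delta(i,j)=p$ since $1\le p\le n-1$, as required. Then setting $k=j+r=i+p+r$ gives $\delta(j,k)=r$ (again using the range of $r$, which by Lemma~\ref{lemma2.2} and the admissibility hypothesis is a genuine path-length summing with the others to $n$), and setting $\ell=k+q=i+p+q+r$ gives $\delta(k,\ell)=q$. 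Finally $\delta(\ell,i)=n-(p+q+r)=s$ because the four path-lengths around the Hamiltonian cycle $\Gamma$ must sum to $n$ by (\ref{longueursimple}). So the proposed quadruplet solves the system, and conversely, since $\delta(i,j)=p$ forces $j\equiv i+p$, then $\delta(j,k)=r$ forces $k\equiv j+r$, and $\delta(k,\ell)=q$ forces $\ell\equiv k+q$, the solution is unique.

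For the case $\sharp(\mathcal{S})=2$ (complex quadrilateral), the only subtlety is that $r=\Delta(j,k)$ rather than $\delta(j,k)$, so $r$ may be negative. Here I would note that $\Delta(j,k)\in\{\delta(j,k),-\delta(k,j)\}$, and in either sub-case we still have $k\equiv j+r\pmod n$: if $r=\delta(j,k)$ this is immediate, and if $r=-\delta(k,j)$ then $k\equiv j-\delta(k,j)=j+r\pmod n$ as well, since $\delta(k,j)\equiv -(j-k)\pmod n$. Hence again $j=i+p$, $k=i+p+r$, $\ell=i+p+q+r$ modulo $n$, and the formula for $s=n-(p+q+r)$ is just the definition in (\ref{defpqrs}), so nothing further needs checking. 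Uniqueness follows exactly as before from reading off $j$, $k$, $\ell$ successively.

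The main (and really only) obstacle worth flagging is bookkeeping of the mod-$n$ reductions: one must be careful that the ranges imposed on $p,q,r$ in Definition~\ref{ddefpqrs} together with admissibility guarantee that $\delta$ returns exactly $p$, $q$ (and $r$ in the simple case) rather than their complements $n-p$, $n-q$, and that in the complex case the two branches of $\Delta$ are reconciled correctly with congruence. Once that is settled the argument is a one-line substitution, which is presumably why the authors state the solution without belaboring it. I would therefore keep the proof short: state the substitution, verify each of the four $\delta$-values, invoke Lemma~\ref{lemma2.2} for the value of $s$, and remark that uniqueness is forced by solving for $j,k,\ell$ one at a time.
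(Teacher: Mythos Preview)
Your proposal is correct and follows the only natural route: direct verification from the definitions of $\delta$ and $\Delta$, together with the mod-$n$ bookkeeping. The paper in fact gives no proof at all for this lemma, presenting it simply as an immediate consequence of Definition~\ref{ddefpqrs}; your write-up is therefore more careful than the original, and your handling of the two branches of $\Delta$ in the complex case (showing $k\equiv j+r\pmod n$ regardless of which branch is selected) is exactly the small point that deserves to be made explicit.
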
 
Clearly, to a triplet $(p,q,r)$ correspond $n$ quadrilaterals which are obtained from each other by rotation of center the origin and of angle $\frac{2\pi}{n}$. For each of them, only the intersection point of $\mathcal{D}_{i,i+p}$ and $\mathcal{D}_{i+p+r,i+p+q+r}$ is highlighted since $p$ and $q$ are necessarily associated to these two specific straight lines. If we are concerned with the second intersection point $\mathcal{D}_{i+p,i+p+r}\cap\mathcal{D}_{i+p+q+r,i}$, we will have to consider another triplet $(\tilde{p},\tilde{q},\tilde{r})$, $\tilde{p}$ and $\tilde{q}$ being necessarily connected to these two straight lines.\\
We note also that the condition of Lemma \ref{lemCard3} and definitions (\ref{delta}), (\ref{deltaretrtilde}) and (\ref{defpqrs}) imply naturally that $1\leq|r|,|s|\leq n-1$. We will show that the sign of the product $rs$ allows to distinguish among simple and complex quadrilaterals as shown in Lemma \ref{lemma2.5}. 
\begin{remark}
	Let us note that $r=0$ (resp. $s=0$) if and only if $z_j=z_k$ (resp. $z_i=z_\ell$), which yields $\sharp(\{i,j,k,\ell\})=3$. 
\end{remark}
\begin{remark}
	The equality $r=s$, which is equivalent to $p+q+2r=n$, indicates naturally that the two straight lines $\mathcal{D}_{i,j}$ and $\mathcal{D}_{k,\ell}$ are parallel and do not generate any intersection point. Of course, as shown in Lemma \ref{lemma2.5}, this situation will never occur when we deal with complex quadrilaterals since $rs<0$.	
\end{remark}
\begin{lemma}
	Let us consider an admissible quadruplet $(i,j,k,\ell)$ and its associated complex quadrilateral $\mathcal{Q}=(z_i,z_j,z_k,z_\ell)$. Then
	\begin{itemize}
		\item $r=\delta(j,k)>0$ if and only if $\delta(j,k)\leq \left[\frac{n-1}{2}\right]$ and $r=\delta(j,k)-n<0$ otherwise.
		\item When $n$ is even and $\delta(j,k)=\frac{n}{2}$, we may choose indifferently $r=\frac{n}{2}$ or $r=-\frac{n}{2}$.
		\item $\mathcal{Q}$ is simple (resp. complex) if and only if $r,s>0$ (resp. $rs<0$).
	\end{itemize}  	
	\label{lemma2.5}
\end{lemma}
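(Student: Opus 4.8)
The plan is to dispatch the three items in order, using only Definition \ref{ddefpqrs}, the elementary identity $\delta(a,b)+\delta(b,a)=n$, Lemma \ref{lemma2.2} and Lemma \ref{lemma2.3}; no new construction is needed. For the first two bullets, I would begin by noting that for a complex quadrilateral $\sharp(\mathcal{S})=2$ by Lemma \ref{lemma2.3}, so $r=\Delta(j,k)$ by (\ref{defpqrs}). By the definition (\ref{deltaretrtilde}), $\Delta(j,k)$ is whichever of $\delta(j,k)$ and $-\delta(k,j)=\delta(j,k)-n$ has smaller absolute value; since $1\le\delta(j,k)\le n-1$ we have $|\delta(j,k)|=\delta(j,k)$ and $|\delta(j,k)-n|=n-\delta(j,k)$. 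Hence $r=\delta(j,k)$ exactly when $\delta(j,k)\le n-\delta(j,k)$, i.e. $2\delta(j,k)\le n$, which for $n$ odd reads $\delta(j,k)\le\left[\frac{n-1}{2}\right]$, and for $n$ even reads $\delta(j,k)\le\frac n2$, the equality $\delta(j,k)=\frac n2$ being precisely the tie in the argmin that leaves both $r=\frac n2$ and $r=-\frac n2$ available; in every other case $2\delta(j,k)>n$ and $r=\delta(j,k)-n<0$. That settles the first two items.

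For the third bullet I would use $s=n-(p+q+r)$ together with the path-length equalities of Lemma \ref{lemma2.2}. If $\mathcal{Q}$ is simple, then $\sharp(\mathcal{S})=1$, so $r=\delta(j,k)$, and (\ref{longueursimple}) gives $p+q+r=\delta(i,j)+\delta(k,\ell)+\delta(j,k)=n-\delta(\ell,i)$, whence $s=\delta(\ell,i)$; both $r=\delta(j,k)\ge1$ and $s=\delta(\ell,i)\ge1$ are then positive. If $\mathcal{Q}$ is complex, then $\sharp(\mathcal{S})=2$, $r=\Delta(j,k)$, and (\ref{longueurcomplexe}) gives $p+q+\delta(j,k)=2n-\delta(\ell,i)$. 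I would then split on the sign of $r$: if $r=\delta(j,k)>0$ then $s=n-(p+q+\delta(j,k))=\delta(\ell,i)-n<0$; if $r=\delta(j,k)-n<0$ then $s=2n-(p+q+\delta(j,k))=\delta(\ell,i)>0$; in both cases $rs<0$, and the even tie $\delta(j,k)=\frac n2$ falls under these two subcases and again yields $rs<0$.

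Finally I would upgrade the two implications to equivalences: an admissible quadruplet with $\sharp(\{i,j,k,\ell\})=4$ always yields a quadrilateral that is either simple or complex, and the predicates ``$r,s>0$'' and ``$rs<0$'' are mutually exclusive, so ``$r,s>0$'' forces the non-complex, hence simple, case, and ``$rs<0$'' forces the non-simple, hence complex, case. I do not anticipate any real difficulty here; it is a bookkeeping argument, and the only points needing attention are the even boundary value $\delta(j,k)=\frac n2$ (where $r$ is genuinely non-unique but the sign of $rs$ is not) and keeping straight which branch $r=\delta(j,k)$ or $r=\delta(j,k)-n$ is substituted into $s=n-(p+q+r)$ in the complex case.
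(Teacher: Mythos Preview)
Your proposal is correct and follows essentially the same route as the paper: the first two items are handled exactly as in the paper via the argmin in (\ref{deltaretrtilde}) and the comparison $\delta(j,k)$ versus $n-\delta(j,k)$, and the third item rests on the same ingredients (Lemma~\ref{lemma2.2}, Lemma~\ref{lemma2.3}, and $s=n-(p+q+r)$). The only cosmetic difference is that in the complex case you compute $s$ directly as $\delta(\ell,i)-n$ or $\delta(\ell,i)$ (which is exactly the content of Remark~\ref{remdelta}), whereas the paper phrases the same computation as a short contradiction argument; your direct version is arguably cleaner but not a different idea.
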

\begin{proof}
	Regarding the definition (\ref{deltaretrtilde}) of $\displaystyle r=\Delta(j,k)$, we get 
	\begin{center}
		$r=\delta(j,k)>0\Leftrightarrow |\delta(j,k)|<|-\delta(k,j)|=|\delta(j,k)-n|\Leftrightarrow 0<\delta(j,k)<n-\delta(j,k)\Leftrightarrow 0<\delta(j,k)<\frac{n}{2}$.
	\end{center}
	Similarly, $r=-\delta(k,j)<0 \Leftrightarrow \delta(j,k)>\frac{n}{2}$. Since $\left[\frac{n-1}{2}\right]$ equals to $\frac{n}{2}-1$ when $n$ is even and $\frac{n-1}{2}$ otherwise, we get the first result. The second point is obvious since when $n$ is even and $\delta(j,k)=\frac{n}{2}$, $\delta(k,j)=\frac{n}{2}$ which means that $r=\frac{n}{2}$ or $r=-\frac{n}{2}$. In order to prove the third point, we use Proposition \ref{prop2.1} and Definitions \ref{defadm} and \ref{ddefpqrs}. When $\mathcal{Q}$ is simple, $r$ and $s$ respectively equal to $\delta(j,k)$ and $\delta(\ell,i)$, are both necessarily positive. When $\mathcal{Q}$ is complex, two cases may occur. If $|\delta(j,k)|>|-\delta(k,j)|$ then $r=-\delta(k,j)<0$. Let us assume that $s=n-(p+q+r)<0$ thus by using (\ref{longueurcomplexe}), which rewrites as $p+(n+r)+q+\delta(\ell,i)=2n$, we get $\delta(\ell,i)<0$ which is impossible and we deduce $rs<0$. The same conclusion holds when  $|\delta(j,k)|<|-\delta(k,j)|$ since in this case, $r=\delta(j,k)>0$ and by assuming that $s>0$, $2n=p+q+r+\delta(\ell,i)<n+\delta(\ell,i)<2n$ which is contradictory.
\end{proof}
The third point of Lemma \ref{lemma2.5} could also have been proven using the following remark.
\begin{remark} Let $(i,j,k,\ell)$ denote an admissible quadruplet and $\mathcal{Q}=(z_i,z_j,z_k,z_\ell)$ its associated quadrilateral.
	\begin{itemize}
		\item When $\mathcal{Q}$ is simple, $s=\delta(\ell,i)$.
		\item When $\mathcal{Q}$ is complex, $s=\left\{\begin{array}{ccl}-\delta(i,\ell)=\delta(\ell,i)-n & \mbox{ if } & r=\delta(j,k)\\
			\delta(\ell,i) & \mbox{ if } & r=-\delta(k,j)=\delta(j,k)-n\end{array}\right..$
	\end{itemize}
	\label{remdelta}
\end{remark}
These results follow immediately from Lemma \ref{lemma2.5}.
\begin{example}
	We consider the case of the clique-arrangement with $n=8$ as well as the simple quadrilateral $(z_2,z_4,z_7,z_1)$ and the complex one $(z_1,z_4,z_2,z_7)$. 
	In the first case, we have $(p,q,r)=(2,2,1)$, while in the second case, $(p,q,r)=(3,3,-1)$, $s$ being equal to 3 in both cases. If we consider the complex quadrilateral $(z_2,z_1,z_4,z_7)$, $\sharp\left(\curvearc{z_2z_1}\cap\{z_4,z_7\}\right)=2$ and $(p,q,r)=(7,3,3)$ while for $(z_1,z_2,z_7,z_4)$, $\sharp\left(\curvearc{z_7z_4}\cap\{z_2,z_1\}\right)=2$ and $(p,q,r)=(1,5,-3)$.
\end{example}
As mentioned previously, to any admissible quadruplet $(i,j,k,\ell)$ (and then to its associated quadrilateral $\mathcal{Q}$) corresponds only one triplet $(p,q,r)$ and only one intersection point $z_{i,j,k,\ell}$. Nevertheless, there is no 1-to-1 correspondance between $z_{i,j,k,\ell}$ and $(p,q,r)$. Indeed, a triplet $(p,q,r)$, unlike an admissible quadruplet $(i,j,k,\ell)$, characterizes a specific shape of a quadrilateral (and then a specific orbit) as well as a set of intersection points, but not a quadrilateral or an intersection point in particular.\\ 
%

\section{Orbits of clique-arrangements}

The circular orbits generated by the clique-arrangement may be defined by their distance from the origin and which has been characterized in \cite{RS2} help to the square modulus function $J_n$. 
For various integers $n,i,j,k,\ell $, such that $i+j-(k+\ell)$ is not a multiple of $n$ we set 
\begin{equation}
	J_n(i,j,k,\ell )=\frac{\cos ^2(ij_{-})+\cos ^2(k\ell _{-})-2\cos (ij_{-})\cos (k\ell _{-})\cos (ij_{+}-k\ell _{+})}{\sin ^2(ij_{+}-k\ell _{+})}. 
	\label{Jn}
\end{equation}
When $k+\ell \not\equiv 0\mbox{ mod }n$ and $i+j\equiv 0\mbox{ mod }n$, we set
\begin{equation*}
	\widetilde J_n(i,j,k,\ell )=\frac{\cos ^2(ii_{+})+\cos ^2(k\ell _{-})-2\cos (ii_{+})\cos (k\ell _{-})\cos (k\ell _{+})}{\sin ^2(k\ell _{+})}.
	\label{Jnprime}
\end{equation*}
The range of $\widetilde{J_n}$ is a subset of the range of $J_n$ which are sets of algebraic numbers and nothing but the squares of the radii of the orbits containing the vertices of $\mathcal{K}_n$. We have $\Im(J_n)\subset [0,\left(\sin\left(\frac{\pi}{n}\right)\right)^{-2}]$.\\
The two previous formulas look strangely like to some normalized Al-Kashi's formulas combining angles and side lengths characterizing the quadrilateral $(z_i,z_j,z_k,z_\ell)$. Let us note at this point the important role played by the three integers $i-j$, $k-\ell$ and $i+j-(k+\ell)$ in the formula (\ref{Jn}).\\

First we note that the problem of computing the modulii of the orbits does not necessarily require a clique-arrangement. Indeed, if we consider four points $z_A=R\exp(\alpha_A I)$, $z_B=R\exp(\alpha_B I)$, $z_C=R\exp(\alpha_C I)$ and $z_D=R\exp(\alpha_D I)$ on a circle centered at the origin and of radius $R$, they form a cyclic or inscribed quadrilateral. The six segments $[z_A,z_B]$, $[z_B,z_C]$, $[z_C,z_D]$, $[z_D,z_A]$, $[z_B,z_D]$ and $[z_A,z_C]$ endlessy extended respectively to $\mathcal{D}_{A,B}$, $\mathcal{D}_{B,C}$, $\mathcal{D}_{C,D}$, $\mathcal{D}_{D,A}$, $\mathcal{D}_{B,D}$ and $\mathcal{D}_{A,C}$ generate two external and one internal intersection points, respectively $z_{A,B,C,D}$, $z_{A,D,B,C}$ and $z_{A,C,B,D}$, provided no straight line is parallel to another (see Figure \ref{fig2}).
\begin{figure}[!ht]
	\begin{center}
		\includegraphics[width=10cm,height=7cm]{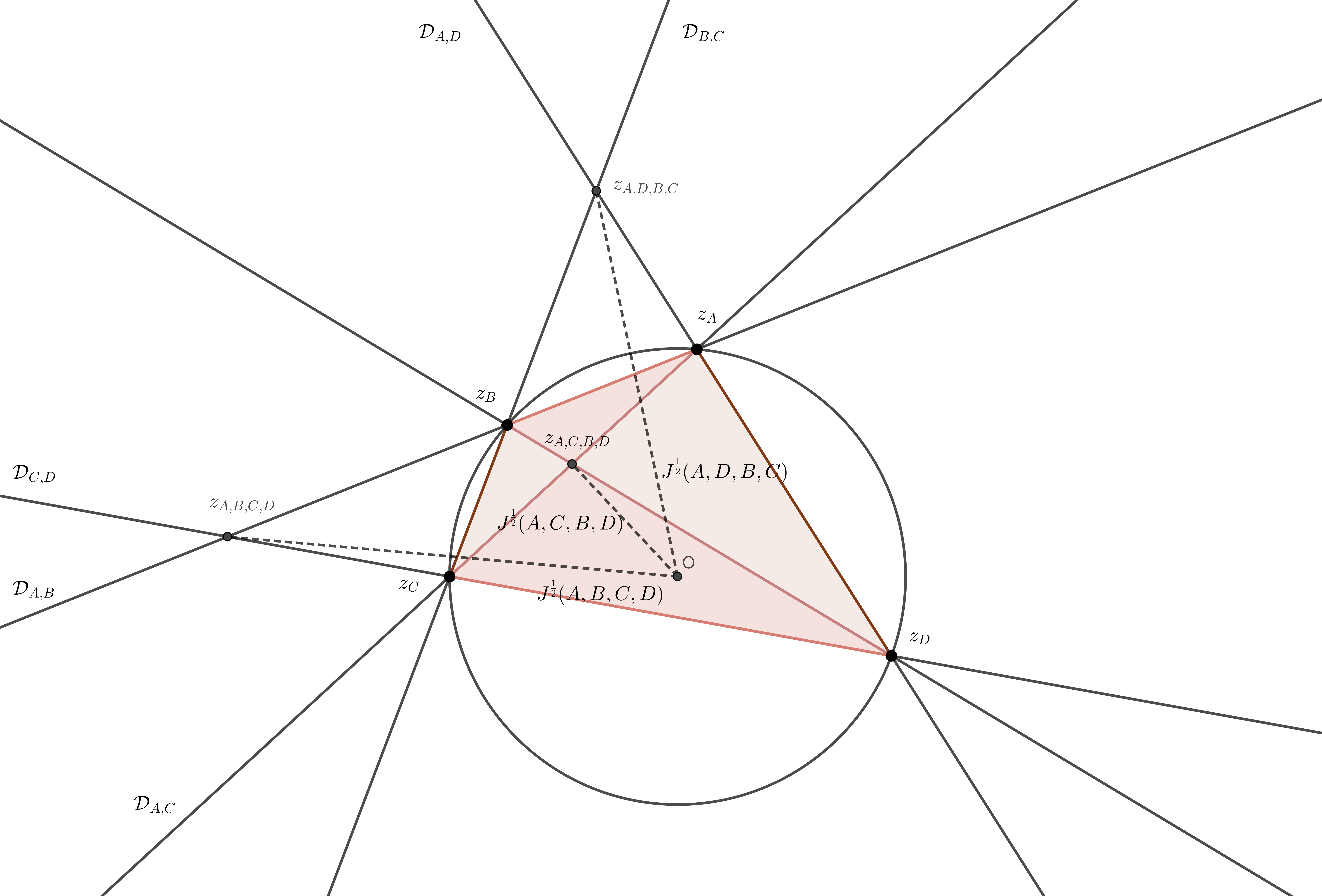}
	\end{center}
	\caption{\centering The 3 cyclic quadrilaterals generated from 4 vertices $z_A,z_B,z_C,z_D$ and their associated 3 intersection points.}
	\label{fig2}
\end{figure}
\noindent Clearly, the way the four points are distributed on the circle has a crucial importance on the location of the intersection point and its distance from the origin of the unit circle. Regarding $J(A,B,C,D)$ for example, which is defined as a natural extension of (\ref{Jn}) to four vertices $z_A,z_B,z_C,z_D$ lying on a circle of radius $R$ centered at $0$ but not necessarily located on preset positions as following
\begin{equation}
	\footnotesize{J(A,B,C,D)=R\frac{\cos ^2(\alpha_A-\alpha_B)+\cos ^2(\alpha_C-\alpha_D)-2\cos (\alpha_A-\alpha_B)\cos (\alpha_C-\alpha_D)\cos (\alpha_A+\alpha_B-(\alpha_C+\alpha_D))}{\sin ^2(\alpha_A+\alpha_B-(\alpha_C+\alpha_D))},}
	\label{Jncont}
\end{equation}
the segments $[z_A,z_B]$ and $[z_C,z_D]$, or $[z_B,z_C]$ and $[z_A,z_D]$, need to be furthest from each other and nearly parallel in order to make the value of $J$ as large as possible. On the other hand, the segments $[z_A,z_B]$ and $[z_C,z_D]$ need to intersect inside the circle and to be (almost) diameters in order to make the value of $J$ (almost) equal to 0. The formula (\ref{Jncont}) which provides the distances to the origin of the two intersection points generated in this way, seems to be new.\\

Next we come back to the formula occuring in \cite{RS1} and \cite{RS2} for $J_n(i,j,k,\ell)$ and we express it help to the three parameters $p,q,r$.

\begin{theorem}
	With the previous notation, the formula (\ref{Jn}) for $J_n$ occuring in \cite{RS1} and \cite{RS2} redefines as
	\begin{equation}
		J_n(p,q,r)=\frac{\cos ^2(p\frac{\pi}{n})+\cos ^2(q\frac{\pi}{n})-2\cos (p\frac{\pi}{n})\cos (q\frac{\pi}{n})\cos ((p+q+2r)\frac{\pi}{n})}{\sin ^2((p+q+2r)\frac{\pi}{n})},
		\label{Jnmod}
	\end{equation}
	which may rewrite matricially as
	\[J_n(p,q,r)=\frac{1}{\det(A(p,q,r))}v(p,q)^TA(p,q,r)v(p,q)\]
	where $A(p,q,r)=\begin{pmatrix} 1 & -\cos((p+q+2r)\frac{\pi}{n})\\-\cos((p+q+2r)\frac{\pi}{n}) & 1\end{pmatrix}$ and $v(p,q)=\begin{pmatrix}\cos(p\frac{\pi}{n})\\\cos(q\frac{\pi}{n})\end{pmatrix}$.
	\label{thmJn}
\end{theorem}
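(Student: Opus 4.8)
The plan is to reduce the four-index expression (\ref{Jn}) to the three-parameter one by feeding it the explicit admissible quadruplet produced by Lemma \ref{lemma2.4}. Given a triplet $(p,q,r)$ with $1\le p,q\le n-1$, that lemma provides $(i,j,k,\ell)=(i,i+p,i+p+r,i+p+q+r)$, the indices being understood modulo $n$. So before substituting I would record the auxiliary fact that $J_n(i,j,k,\ell)$ is insensitive to shifting any one of its indices by $n$: such a shift changes each of the angles $ij_-$, $k\ell_-$ and $ij_+-k\ell_+$ by an integer multiple of $\pi$, which leaves $\cos^2(ij_-)$, $\cos^2(k\ell_-)$ and $\sin^2(ij_+-k\ell_+)$ untouched and flips an even number of signs in the product $\cos(ij_-)\cos(k\ell_-)\cos(ij_+-k\ell_+)$ (one checks the four cases $i,j,k,\ell\mapsto\cdot+n$ separately). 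This legitimises plugging the integer representatives $i$, $i+p$, $i+p+r$, $i+p+q+r$ directly into (\ref{Jn}).

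Then I would carry out the substitution. One has $i-j=-p$ and $k-\ell=-q$, so by evenness of the cosine $\cos^2(ij_-)=\cos^2\!\big(p\tfrac{\pi}{n}\big)$ and $\cos^2(k\ell_-)=\cos^2\!\big(q\tfrac{\pi}{n}\big)$; and $i+j=2i+p$ while $k+\ell=2i+2p+q+2r$, so $ij_+-k\ell_+=-(p+q+2r)\tfrac{\pi}{n}$, whence $\cos(ij_+-k\ell_+)=\cos\!\big((p+q+2r)\tfrac{\pi}{n}\big)$ and $\sin^2(ij_+-k\ell_+)=\sin^2\!\big((p+q+2r)\tfrac{\pi}{n}\big)$. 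Substituting these four identities into (\ref{Jn}) gives (\ref{Jnmod}) verbatim; in particular the free index $i$ cancels out, which is precisely the statement that the value depends only on $(p,q,r)$ and so attaches to an orbit rather than to a single point.

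For the matricial reformulation I would simply note that $\det\big(A(p,q,r)\big)=1-\cos^2\!\big((p+q+2r)\tfrac{\pi}{n}\big)=\sin^2\!\big((p+q+2r)\tfrac{\pi}{n}\big)$ is the denominator of (\ref{Jnmod}), and that expanding the quadratic form gives $v(p,q)^{T}A(p,q,r)\,v(p,q)=\cos^2\!\big(p\tfrac{\pi}{n}\big)+\cos^2\!\big(q\tfrac{\pi}{n}\big)-2\cos\!\big(p\tfrac{\pi}{n}\big)\cos\!\big(q\tfrac{\pi}{n}\big)\cos\!\big((p+q+2r)\tfrac{\pi}{n}\big)$, which is the numerator; dividing yields the displayed identity.

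There is no genuinely deep step here — the content is the bookkeeping inherited from Lemma \ref{lemma2.4} and Definition \ref{ddefpqrs}. The one point I would be careful to state rather than leave implicit is the mod-$n$ invariance of (\ref{Jn}) in each argument, since Lemma \ref{lemma2.4} only recovers the quadruplet up to such shifts; a related minor point is that the two admissible conventions for $r$ in (\ref{defpqrs}) (namely $\delta(j,k)$ versus $\Delta(j,k)$, and the ambiguity $r=\pm n/2$ for even $n$) differ by a multiple of $n$, hence change $p+q+2r$ by a multiple of $2n$ and leave (\ref{Jnmod}) unchanged, so the reduced formula is genuinely well posed.
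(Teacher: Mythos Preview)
Your argument is correct and considerably more direct than the paper's. The paper proceeds by exhaustive case analysis: since the indices $i,j,k,\ell$ are forced into $\{0,\ldots,n-1\}$, it distinguishes which of the four consecutive pairs $(i,j),(j,k),(k,\ell),(\ell,i)$ wraps past $z_0$ (one pair if $\mathcal{Q}$ is simple, two if complex, per Lemma~\ref{lemma2.3}), builds two tables ($4+6$ rows) recording the signs picked up by $\cos(ij_-)$, $\cos(k\ell_-)$, $\cos(ij_+-k\ell_+)$ in each case, and verifies that the sign pattern in the product is always $(-1)^{\sharp(\mathcal{S})}$, which combines with the relation between $r-s$ and $\delta(j,k)-\delta(\ell,i)$ to yield (\ref{Jnmod}).

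Your single observation that $J_n(i,j,k,\ell)$ is invariant under shifting any argument by $n$ absorbs all ten of these cases at once: it lets you feed the unreduced representatives $(i,i+p,i+p+r,i+p+q+r)$ from Lemma~\ref{lemma2.4} directly into (\ref{Jn}), after which the substitution is a two-line computation and the index $i$ visibly drops out. What the paper's longer route buys is an explicit record of how each trigonometric factor behaves in every configuration, which may be useful background for the later arc-length computations in Section~4; what your route buys is a proof that fits in a paragraph and makes the orbit-dependence transparent. Your closing remark that the two conventions for $r$ in (\ref{defpqrs}) differ by a multiple of $n$, hence shift $p+q+2r$ by a multiple of $2n$, is a nice well-definedness check that the paper leaves implicit.
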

\begin{proof}
As a consequence of Lemma \ref{lemma2.3}, and if $(a,b)$ denotes the single pair of indices such that $z_a$ and $z_b$ are on either sides of $z_0$, the different components occuring in (\ref{Jn}) when $\mathcal{Q}$ is a simple quadrilateral may rewrite as follows:
\begin{table}[!ht]
	\centering
	\begin{tabular}{|c||c|c|c|c|}
		\hline
		$(a,b)$ & $\cos(ij_{-})$ & $\cos(k\ell_{-})$ & $ij_+-k\ell_+$ & $\cos(ij_+-k\ell_+)$\\
		\hline
		\hline
		$(i,j)$ & $-\cos(\delta(i,j)\frac{\pi}{n})$ & $\cos(\delta(k,\ell)\frac{\pi}{n})$ & $(\delta(\ell,i)-\delta(j,k))\frac{\pi}{n}$ & $\cos((\delta(j,k)-\delta(\ell,i))\frac{\pi}{n})$\\		
		\hline
		$(j,k)$ & $\cos(\delta(i,j)\frac{\pi}{n})$ & $\cos(\delta(k,\ell)\frac{\pi}{n})$ & $(n+\delta(\ell,i)-\delta(j,k))\frac{\pi}{n}$ & $-\cos((\delta(j,k)-\delta(\ell,i))\frac{\pi}{n})$\\		
		\hline
		$(k,\ell)$ & $\cos(\delta(i,j)\frac{\pi}{n})$ & $-\cos(\delta(k,\ell)\frac{\pi}{n})$ & $(\delta(\ell,i)-\delta(j,k))\frac{\pi}{n}$ & $\cos((\delta(j,k)-\delta(\ell,i))\frac{\pi}{n})$\\
		\hline
		$(\ell,i)$ & $\cos(\delta(i,j)\frac{\pi}{n})$ & $\cos(\delta(k,\ell)\frac{\pi}{n})$ & $(-n+\delta(\ell,i)-\delta(j,k))\frac{\pi}{n}$ & $-\cos((\delta(j,k)-\delta(\ell,i))\frac{\pi}{n})$\\		
		\hline		
	\end{tabular}
	\caption{Components of $J_n$ with respect to (w.r.t.) $\delta$ when $\mathcal{Q}$ is simple}
\end{table}

\noindent When $\mathcal{Q}$ is a complex quadrilateral, we have $\sharp(\mathcal{S})=2$. Then, if $(a_1,b_1)$ and $(a_2,b_2)$ denote the two pairs of indices such that $z_{a_1}$ and $z_{b_1}$ as well as $z_{a_2}$ and $z_{b_2}$ are on either sides of $z_0$, the following table holds
\begin{table}[!ht]
	\centering
	\begin{tabular}{|c|c||c|c|c|c|}
		\hline
		$(a_1,b_1)$ & $(a_2,b_2)$ & $\cos(ij_{-})$ & $\cos(k\ell_{-})$ & $ij_+-k\ell_+$ & $\cos(ij_+-k\ell_+)$\\
		\hline
		\hline
		$(i,j)$ & $(j,k)$ & $-\cos(\delta(i,j)\frac{\pi}{n})$ & $\cos(\delta(k,\ell)\frac{\pi}{n})$ & $(n+\delta(\ell,i)-\delta(j,k))\frac{\pi}{n}$ & $-\cos((\delta(j,k)-\delta(\ell,i))\frac{\pi}{n})$\\
		\hline
		$(i,j)$ & $(k,\ell)$ & $-\cos(\delta(i,j)\frac{\pi}{n})$ & $-\cos(\delta(k,\ell)\frac{\pi}{n})$ & $(\delta(\ell,i)-\delta(j,k))\frac{\pi}{n}$ & $\cos((\delta(j,k)-\delta(\ell,i))\frac{\pi}{n})$\\
		\hline
		$(i,j)$ & $(\ell,i)$ & $-\cos(\delta(i,j)\frac{\pi}{n})$ & $\cos(\delta(k,\ell)\frac{\pi}{n})$ & $(n+\delta(\ell,i)-\delta(j,k))\frac{\pi}{n}$ &  $-\cos((\delta(j,k)-\delta(\ell,i))\frac{\pi}{n})$ \\		
		\hline
		$(j,k)$ & $(k,\ell)$ & $\cos(\delta(i,j)\frac{\pi}{n})$ & $-\cos(\delta(k,\ell)\frac{\pi}{n})$ & $(n+\delta(\ell,i)-\delta(j,k))\frac{\pi}{n}$ & $-\cos((\delta(j,k)-\delta(\ell,i))\frac{\pi}{n})$\\
		\hline
		$(j,k)$ & $(\ell,i)$ & $\cos(\delta(i,j)\frac{\pi}{n})$ & $\cos(\delta(k,\ell)\frac{\pi}{n})$ & $(\delta(\ell,i)-\delta(j,k))\frac{\pi}{n}$ & $\cos((\delta(j,k)-\delta(\ell,i))\frac{\pi}{n})$\\			
		\hline
		$(k,\ell)$ & $(\ell,i)$ & $\cos(\delta(i,j)\frac{\pi}{n})$ & $-\cos(\delta(k,\ell)\frac{\pi}{n})$ & $(-n+\delta(\ell,i)-\delta(j,k))\frac{\pi}{n}$ & $-\cos((\delta(j,k)-\delta(\ell,i))\frac{\pi}{n})$\\
		\hline		
	\end{tabular}
	\caption{Components of $J_n$ w.r.t. $\delta$ when $\mathcal{Q}$ is complex}
\end{table}
Therefore, in any situation, we may write that
\begin{itemize}
	\item[.] $\cos ^2(ij_{-})+\cos ^2(k\ell _{-})=\cos^2(p\frac{\pi}{n})+\cos^2(q\frac{\pi}{n})$,  
	\item[.] $\sin ^2(ij_{+}-k\ell _{+})=\sin^2((\delta(j,k)-\delta(\ell,i))\frac{\pi}{n})$ and 
	\item[.] $\cos (ij_{-})\cos (k\ell _{-})\cos (ij_{+}-k\ell _{+})=(-1)^{\sharp(\mathcal{S})}\cos(p\frac{\pi}{n})\cos(q\frac{\pi}{n})\cos((\delta(j,k)-\delta(\ell,i))\frac{\pi}{n})$.
	\end{itemize} 
When $\mathcal{Q}$ is simple, 
\[\cos((\delta(j,k)-\delta(\ell,i))\frac{\pi}{n})=\cos((r-s)\frac{\pi}{n})=\cos((p+q+2r-n)\frac{\pi}{n})=-\cos((p+q+2r)\frac{\pi}{n}),\]
while in the case when $\mathcal{Q}$ is complex, using Remark \ref{remdelta}, $r-s=\pm n+\delta(j,k)-\delta(\ell,i)$ and consequently,
\[-\cos((\delta(j,k)-\delta(\ell,i))\frac{\pi}{n})=\cos((r-s)\frac{\pi}{n})=-\cos((p+q+2r)\frac{\pi}{n}).\]
Therefore, we get in any case, $\cos(ij_{-})\cos(k\ell_{-})\cos(ij_+-k\ell_+)=\cos(p\frac{\pi}{n})\cos(q\frac{\pi}{n})\cos((p+q+2r)\frac{\pi}{n})$.
\end{proof}
\begin{example} \rm
	As an example, we consider the case of the clique-arrangement with $n=20$. The different values of $\sqrt{J_n}$ w.r.t. the $410$ external orbits and the $334$ internal orbits are given in Figures \ref{figureJnext} and \ref{figureJnint} respectively. We note that the behaviour of $\sqrt{J_n}$ is different outside and inside the unit circle as well as the number of orbits. 
	\begin{figure}[!ht]
		\begin{minipage}[c]{.46\linewidth}
			\centering\includegraphics[width=5.5cm,height=3.7cm]{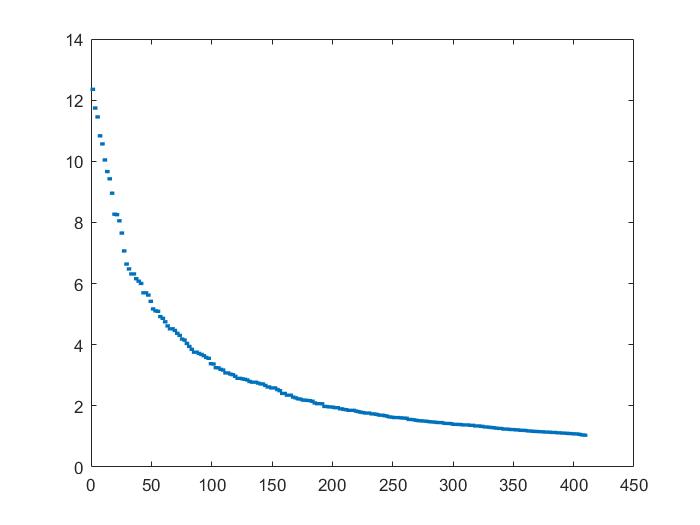}
			\caption{Values of $\sqrt{J_{20}}$ outside the unit circle}
			\label{figureJnext}
		\end{minipage} \hfill
		\begin{minipage}[c]{.46\linewidth}
			\centering\includegraphics[width=5.5cm,height=3.7cm]{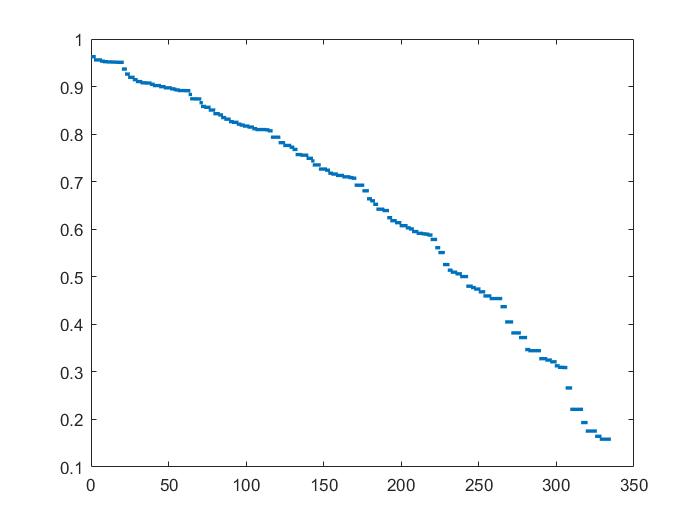}
			\caption{Values of $\sqrt{J_{20}}$ inside the unit circle}
			\label{figureJnint}
		\end{minipage}
	\end{figure}
\end{example}
\begin{proposition}
	For all triplet $(p,q,r)$ associated to any inscribed quadrangle, we have 
	\begin{equation}
		J_n(p,q,r)=J_n(q,p,r)\qquad\mbox{and}\qquad J_n(p,q,r)=J_n(p,q,s).
		\label{propJn}
	\end{equation}
\label{prop3.1}
\end{proposition}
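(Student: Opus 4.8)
Both identities are immediate consequences of the closed form (\ref{Jnmod}), which I would take as the starting point. Write $\theta = p\frac{\pi}{n}$, $\varphi = q\frac{\pi}{n}$, and $\sigma = (p+q+2r)\frac{\pi}{n}$, so that
\[
J_n(p,q,r)=\frac{\cos^2\theta+\cos^2\varphi-2\cos\theta\cos\varphi\cos\sigma}{\sin^2\sigma}.
\]

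For the first equality $J_n(p,q,r)=J_n(q,p,r)$, I would simply observe that swapping $p\leftrightarrow q$ fixes $\sigma$ (since $p+q+2r$ is symmetric in $p,q$) and merely interchanges $\theta\leftrightarrow\varphi$; the numerator $\cos^2\theta+\cos^2\varphi-2\cos\theta\cos\varphi\cos\sigma$ and the denominator $\sin^2\sigma$ are both symmetric under this interchange. Hence the value is unchanged. (One should remark that $(q,p,r)$ is again a legitimate triplet, corresponding to the same quadrilateral read with the roles of the two chords exchanged, so the statement is meaningful.)

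For the second equality $J_n(p,q,r)=J_n(p,q,s)$, the point is that replacing $r$ by $s=n-(p+q+r)$ changes only the third argument of $J_n$, and there it enters only through $p+q+2r$. Since
\[
p+q+2s = p+q+2\bigl(n-(p+q+r)\bigr)=2n-(p+q+2r),
\]
we get $(p+q+2s)\frac{\pi}{n}=2\pi-\sigma$. Using $\cos(2\pi-\sigma)=\cos\sigma$ and $\sin^2(2\pi-\sigma)=\sin^2\sigma$, every occurrence of the $\sigma$-dependent quantities in (\ref{Jnmod}) is left invariant, while $\cos^2(p\frac{\pi}{n})$ and $\cos^2(q\frac{\pi}{n})$ do not involve $r$ at all. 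Therefore $J_n(p,q,s)=J_n(p,q,r)$.

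There is essentially no obstacle here: the whole content is the arithmetic identity $p+q+2s=2n-(p+q+2r)$ together with the $2\pi$-periodicity and evenness of cosine. The only thing worth a sentence of care is that $s$ satisfies $1\le|s|\le n-1$ (already noted after Lemma~\ref{lemma2.4}), so $(p,q,s)$ is a valid triplet and $\sin\sigma\neq 0$ keeps the denominator nonzero, exactly as for $(p,q,r)$; geometrically, passing from $r$ to $s$ corresponds to recording the gap on the other side of the two chords, which produces the same pair of lines and hence the same intersection point and the same orbit radius.
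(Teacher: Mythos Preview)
Your proof is correct and follows essentially the same route as the paper: both identities are read off directly from the closed form (\ref{Jnmod}), the first by the manifest symmetry in $p$ and $q$, the second by computing $p+q+2s=2n-(p+q+2r)$ and using the evenness/periodicity of cosine. Your additional remarks on the validity of $(p,q,s)$ and the geometric interpretation are sound but not needed for the argument.
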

\begin{proof}
	We notice first that because of the symmetry of $J_n$ with respect to its two first components, $J_n(p,q,r)=J_n(q,p,r)$. Next, because $s=n-(p+q+r)$, we have $\cos((p+q+2s)\frac{\pi}{n})=\cos((p+q+2(n-(p+q+r)))\frac{\pi}{n})=\cos((-p-q-2r+2n)\frac{\pi}{n})$, and consequently, $J_n(p,q,s)=J_n(p,q,r)$. 
\end{proof}
Some other triplets may ensure the invariance of $J_n(p,q,r)$ as for example $(q,p,s)$, $(n-p,q,p+r)$ and $(n-p,q,-q-r)$, and all of them are deduced from involutions of the set of triplets satisfying (\ref{defpqrs}). Nevertheless, they do not characterize the same intersection points as shown in the following example.
\begin{example}
	We consider the case $n=9$ and the triplets $(p,q,r)=(3,2,-1)$ and $(p,q,r)=(2,3,-1)$ which generate respectively the internal points $\{P_i\}_{i\in\{0,8\}}$ and $\{Q_i\}_{i\in\{0,8\}}$. As Figure \ref{fig_pqr_andqpr} shows, they are all located on the circle of radius $J_9(3,2,-1)^\frac12=J_9(2,3,-1)^\frac12$ but they are all distinct.
	\begin{figure}[!ht]
		\begin{center}
			\includegraphics[width=6.5cm,height=5cm]{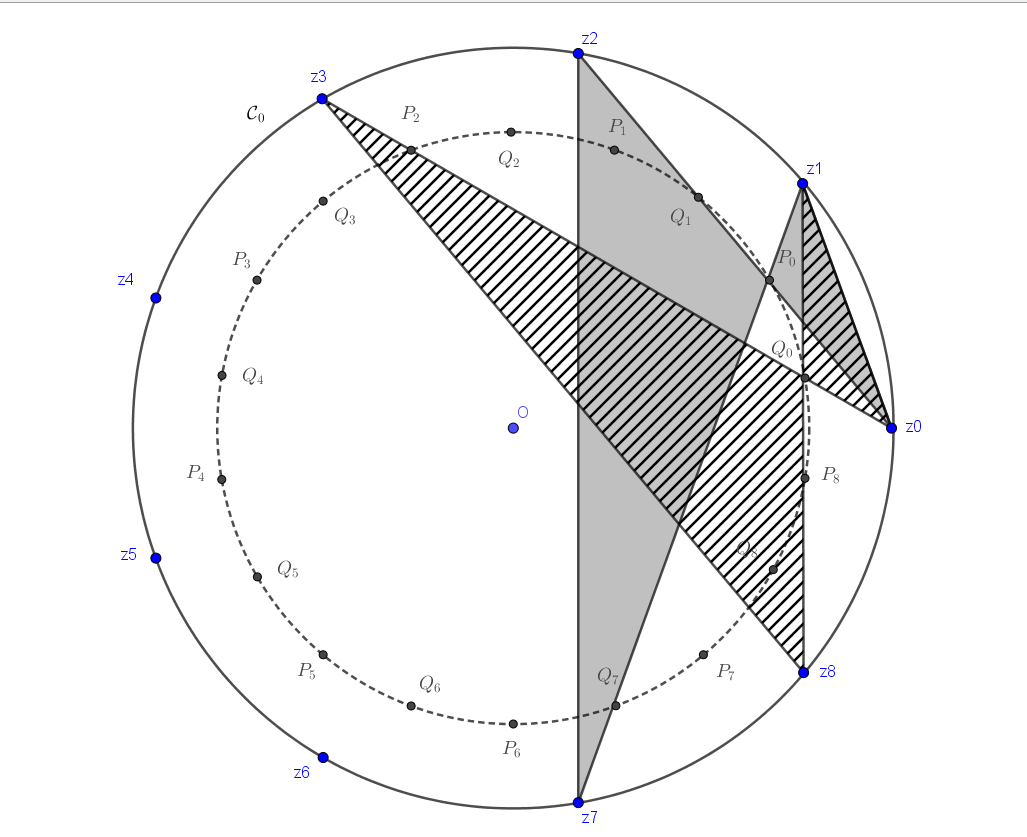}
		\end{center}
	\caption{Distinct cocyclic vertices of $\mathcal{K}_9$ generated from inequivalent triplets.}
	\label{fig_pqr_andqpr}
	\end{figure}
\end{example}
\begin{definition}
	Two triplets $(p_1,q_1,r_1)$ and $(p_2,q_2,r_2)$ are said equivalent if and only if the two sets of cardinality $n$ consisting of intersection points associated to quadrilaterals constructed from $(p_1,q_1,r_1)$ and $(p_2,q_2,r_2)$ are congruent modulo the group of order $n$ of central rotations of angle $\frac{2k\pi}{n}$, $k\in\{0,\ldots,n-1\}$.
	\label{eqtriplets}
\end{definition}
\begin{remark}
	A necessary condition for $(p_1,q_1,r_1)$ and $(p_2,q_2,r_2)$ to be equivalent is that $J_n(p_1,q_1,r_1)=J_n(p_2,q_2,r_2)$. The integers $p_1,q_1,r_1,p_2,q_2$ being given, solving $J_n(p_1,q_1,r_1)=J_n(p_2,q_2,r_2)$ allows to determine the admissible values of $r_2$ such that $(p_1,q_1,r_1)\equiv(p_2,q_2,r_2)$. 
\end{remark}

It is therefore important to develop a way, for a given radius $J_n^\frac12$, to distinguish the points which are associated to some specific triplets $(p,q,r)$ to those which are not. This is the aim of the next section.
	
\section{About the arc length between two points lying on the same orbit}

Let us assume that $P_1=z_{i_1,j_1,k_1,\ell_1}=(x_1,y_1)$ and $P_2=z_{i_2,j_2,k_2,\ell_2}=(x_2,y_2)$ are two intersection points generated respectively by the two triplets $(p_1,q_1,r_1)$ and $(p_2,q_2,r_2)$ such that $J_n(p_1,q_1,r_1)=J_n(p_2,q_2,r_2)=J$. Since the two points $P_1$ and $P_2$ lie on the same circle whose center point is $(0,0)$ and radius equals to $J^\frac12$, the circular arc length $d_{P_1,P_2}$ between these two points is defined as follows
\begin{equation}
	d_{P_1,P_2}=J^\frac12\cos^{-1}\left(\frac{x_1x_2+y_1y_2}{\sqrt{(x_1^2+y_1^2)(x_2^2+y_2^2)}}\right)=J^{\frac12}\cos^{-1}\left(\frac{x_1x_2+y_1y_2}{J}\right).
	\label{distang}
\end{equation}
Let us remind (see \cite{RS2}) that the coordinates of $z_{i,j,k,\ell}=\mathcal{D}_{i,j}\cap\mathcal{D}_{k,\ell}$ are given by
\small{\begin{eqnarray}
		x &=&\frac{\sin (ij_{+})\cos (k\ell _{-})-\cos (ij_{-})\sin (k\ell_{+})}{\sin (ij_{+})\cos (k\ell _{+})-\cos (ij_{+})\sin (k\ell _{+})}=
		\frac{\sin (ij_{+})\cos (k\ell _{-})-\cos (ij_{-})\sin (k\ell _{+})}{\sin (\theta )}, \label{xijkl1}\nonumber\\
		y &=&\frac{\cos (ij_{-})\cos (k\ell _{+})-\cos (ij_{+})\cos (k\ell_{-})}{\sin (ij_{+})\cos (k\ell _{+})-\cos (ij_{+})\sin (k\ell _{+})}=
		\frac{\cos (ij_{-})\cos (k\ell _{+})-\cos (ij_{+})\cos (k\ell _{-})}{\sin (\theta )},\label{yijkl1}\nonumber
\end{eqnarray}}
where $\theta =ij_{+}-k\ell _{+}=\left(i+j-(k+\ell)\right)\frac\pi n$. Therefore, if $\theta_1 =i_1j_{1+}-k_1\ell _{1+}$ and $\theta_2=i_2j_{2+}-k_2\ell _{2+}$, 
\begin{equation} 
	\begin{split}
	d_{P_1,P_2}=J^\frac12\cos^{-1}\left(\frac{1}{J\sin(\theta_1)\sin(\theta_2)}\left[\cos(k_2\ell_{2-})\left(\cos(k_1\ell_{1-})\cos((i_1j_{1+})-(i_2j_{2+}))-\cos(i_1j_{1-})\cos((k_1\ell_{1+})-(i_2j_{2+}))\right)+\right.\right.\\
	\left.\cos(i_2j_{2-})(\cos(i_1j_{1-})\cos((k_1\ell_{1+})-(k_2\ell_{2+}))-\cos(k_1\ell_{1-})\cos((i_1j_{1+})-(k_2\ell_{2+})))\right]\biggr).
	\end{split}
\label{dang}
\end{equation}
\[=J^\frac12\cos^{-1}\left(\frac{1}{J\sin(\theta_1)\sin(\theta_2)}\left[\begin{pmatrix}\cos(p_2\frac{\pi}{n}) & \cos(q_2\frac{\pi}{n})\end{pmatrix}A_{P_1,P_2}\begin{pmatrix}\cos(p_1\frac{\pi}{n}) \\ \cos(q_1\frac{\pi}{n})\end{pmatrix}\right]\right)\]
where $A_{P_1,P_2}=\begin{pmatrix}\cos(\alpha_{k,\ell}) & -\cos(\alpha_{k,\ell}+\theta_1)\\-\cos(\alpha_{k,\ell}-\theta_2) & \cos(\alpha_{k,\ell}+\theta_1-\theta_2)\end{pmatrix}$ and $\alpha_{k,\ell}=k_1\ell_{1+}-k_2\ell_{2+}$. 

Without loss of generality, we may assume that $i_1=0$ and $i_2=0$ then by using Lemma \ref{lemma2.4}, $(i_1,j_1,k_1,\ell_1)=(0,p_1,p_1+r_1,p_1+q_1+r_1)$ and $(i_2,j_2,k_2,\ell_2)=(0,p_2,p_2+r_2,p_2+q_2+r_2)$. Hence,  $\theta_1=(p_1-(2p_1+q_1+2r_1))\frac{\pi}{n}=-(p_1+q_1+2r_1)\frac{\pi}{n}$, $\theta_2=(p_2-(2p_2+q_2+2r_2))\frac{\pi}{n}=-(p_2+q_2+2r_2)\frac{\pi}{n}$ and $\alpha_{k,\ell}=(2p_1+q_1+2r_1)\frac{\pi}{n}-(2p_2+q_2+2r_2)\frac{\pi}{n}=(2(p_1-p_2)+(q_1-q_2)+2(r_1-r_2))\frac{\pi}{n}$. Thus, (\ref{dang}) rewrites as
\begin{equation} 
d_{P_1,P_2}=J^\frac12\cos^{-1}\left(\frac{1}{J\sin((p_1+q_1+2r_1)\frac{\pi}{n})\sin((p_2+q_2+2r_2)\frac{\pi}{n})}\left[\begin{pmatrix}\cos(p_2\frac{\pi}{n}) & \cos(q_2\frac{\pi}{n})\end{pmatrix}A_{P_1,P_2}\begin{pmatrix}\cos(p_1\frac{\pi}{n}) \\ \cos(q_1\frac{\pi}{n})\end{pmatrix}\right]\right)
\label{dP1P2}
\end{equation} 
where 
\begin{equation}
	A_{P_1,P_2}=\begin{pmatrix}\cos((2(p_1-p_2)+(q_1-q_2)+2(r_1-r_2))\frac{\pi}{n}) & -\cos((p_1-(2p_2+q_2+2r_2))\frac{\pi}{n})\\
		-\cos(((2p_1+q_1+2r_1)-p_2)\frac{\pi}{n}) & \cos((p_1-p_2)\frac{\pi}{n})\end{pmatrix}.
	\label{AP1P2}
\end{equation}	
\begin{remark}
	With notation of Theorem \ref{thmJn}, we note that $A_{P_1,P_1}=A(p_1,q_1,r_1)$ and consequently, 
	\begin{center}
		$d_{P_1,P_1}=J^\frac12\cos^{-1}\left(\frac{1}{J\det(A(p_1,q_1,r_1))}[v(p_1,q_1)^TA(p_1,q_1,r_1)v(p_1,q_1)]\right)=0$.
	\end{center}
\end{remark}
\begin{proposition}
	Let $J^\frac12$ be the radius of some orbit of the clique-arrangement $\mathcal{K}_n$.
	\begin{enumerate}
		\item The triplets $(p_1,q_1,r_1)$ and $(p_2,q_2,r_2)$ are equivalent if and only if $J_n(p_1,q_1,r_1)=J_n(p_2,q_2,r_2)=J$ and $d_{P_1,P_2}J^{-\frac12}$ is a multiple, say $\rho$ (defined modulo $n$), of $\frac{2\pi}{n}$.
		\item The cardinality of the orbit $J_n(p,q,r)=J$ is equal to $n\nu$ where $\nu$ is the number of unequal angular distances modulo $J^\frac12\frac{2\pi}{n}$
	\end{enumerate}
	\label{prop4.1}
\end{proposition}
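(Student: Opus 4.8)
The plan is to prove the two assertions of Proposition \ref{prop4.1} by exploiting the fact that all the intersection points generated by a fixed triplet $(p,q,r)$ form a single orbit under the cyclic group $G$ of central rotations of angle $\frac{2\pi}{n}$, together with the arc-length formula (\ref{dP1P2}).

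\emph{Proof of (1).} Write $\mathcal{O}(p,q,r)$ for the set of $n$ intersection points associated to the quadrilaterals built from $(p,q,r)$, as described after Lemma \ref{lemma2.4}. By that lemma, for $i=0$ the quadruplet is $(0,p,p+r,p+q+r)$, and the $n$ quadrilaterals are its images under $G$; hence $\mathcal{O}(p,q,r)=G\cdot P$ where $P=z_{0,p,p+r,p+q+r}$. I would first note that, by the remark following Proposition \ref{prop3.1}, $G\cdot P$ lies entirely on the circle of radius $J_n(p,q,r)^{1/2}$, so that a necessary condition for equivalence is $J_n(p_1,q_1,r_1)=J_n(p_2,q_2,r_2)=:J$; I assume this from now on. Let $P_1$ and $P_2$ be the base points associated to the two triplets (taking $i_1=i_2=0$). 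By Definition \ref{eqtriplets}, the triplets are equivalent iff $G\cdot P_1 = G\cdot P_2$, i.e. iff $P_2 = g\cdot P_1$ for some $g\in G$. Since both points lie on the same circle of radius $J^{1/2}$, writing $P_1 = J^{1/2}e^{\psi_1 I}$ and $P_2 = J^{1/2}e^{\psi_2 I}$, the condition $P_2 = g\cdot P_1$ is equivalent to $\psi_2-\psi_1 \in \frac{2\pi}{n}\mathbb{Z}$. On the other hand, the arc-length (\ref{distang}) satisfies $d_{P_1,P_2} = J^{1/2}\,|\psi_2-\psi_1|$ modulo the circumference $2\pi J^{1/2}$, so $d_{P_1,P_2}J^{-1/2}$ is the angular gap $|\psi_2-\psi_1|$ modulo $2\pi$. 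Therefore $G\cdot P_1=G\cdot P_2$ holds iff $d_{P_1,P_2}J^{-1/2}$ is an integer multiple $\rho$ (well-defined modulo $n$) of $\frac{2\pi}{n}$, which is exactly the claim. The explicit value of this angular gap is furnished by (\ref{dP1P2})--(\ref{AP1P2}), so the criterion is computationally effective.

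\emph{Proof of (2).} Fix the orbit circle of radius $J^{1/2}$ and let $T$ be the set of all triplets $(p,q,r)$ (up to the already-known symmetries of Proposition \ref{prop3.1}) with $J_n(p,q,r)=J$. Each such triplet contributes its set $\mathcal{O}(p,q,r)=G\cdot P_t$ of $n$ cocyclic points (some possibly coinciding, which is precisely the multiplicity phenomenon), and by Lemma \ref{lemCard3} and the discussion after Lemma \ref{lemma2.4} these sets cover all intersection points on that circle. Choosing a fixed reference point $P_0$ on the circle (say the base point of one triplet), each point $Q$ on the circle is determined by the angular distance $d_{P_0,Q}J^{-1/2}\in[0,2\pi)$. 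By part (1), two base points $P_{t}$ and $P_{t'}$ generate the same $G$-orbit iff their angular distances to $P_0$ differ by a multiple of $\frac{2\pi}{n}$; hence the distinct $G$-orbits on the circle are in bijection with the distinct values of $d_{P_0,P_t}J^{-1/2}$ modulo $\frac{2\pi}{n}$, i.e. with the $\nu$ ``unequal angular distances modulo $J^{1/2}\frac{2\pi}{n}$'' of the statement. Since each $G$-orbit, viewed as a set of points on the circle, has exactly $n$ elements counted in the geometric graph $\mathcal{K}_n$ (the $n$ rotates of a base point are geometrically distinct as soon as $\sharp\{i,j,k,\ell\}=4$, because a nontrivial rotation of order dividing $n$ cannot fix a point off the origin), the full orbit $J_n(p,q,r)=J$ has cardinality $n\nu$.

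\emph{Main obstacle.} The delicate point is the bookkeeping hidden in part (2): I must be careful that ``the orbit $J_n(p,q,r)=J$'' in the statement means the whole set of intersection points lying on the circle of radius $J^{1/2}$, and that the number $\nu$ is counted with the right notion of ``unequal'' — namely distinctness of angular position modulo $\frac{2\pi}{n}$, \emph{not} modulo $2\pi$. The subtlety is that different triplets can produce the same $G$-orbit (the equivalences of Proposition \ref{prop3.1} and the extra ones mentioned thereafter), so one cannot simply multiply the number of triplets by $n$; the clean way around this is exactly to pass to angular distances modulo $\frac{2\pi}{n}$ as above, which automatically collapses equivalent triplets. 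A secondary technical point is justifying that each $G$-orbit really has $n$ distinct geometric points and not fewer — this uses $\sharp\{i,j,k,\ell\}=4$ (Lemma \ref{lemCard3}), which guarantees the point is off-origin, so that its stabiliser in $G$ is trivial.
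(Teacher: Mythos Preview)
Your proof is correct and follows essentially the same line as the paper's: both argue that a triplet produces a set $G\cdot P$ of $n$ regularly spaced points on the circle of radius $J^{1/2}$, so two such sets coincide iff the angular gap between their base points lies in $\frac{2\pi}{n}\mathbb{Z}$, whence part (2) follows by counting equivalence classes. Your treatment is in fact more explicit than the paper's --- which dispatches part (2) in one line as ``a direct consequence of the previous result'' --- and your group-theoretic bookkeeping (trivial stabiliser for off-origin points, $\nu$ distinct $G$-orbits) makes the cardinality count $n\nu$ fully transparent.
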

\begin{proof}
	\begin{enumerate}
		\item The triplet $(p,q,r)$ spans exactly $n$ distinct quadrangles being rotated from each other around the origin through an angle $\frac{2\pi}{n}$. The suitable opposite sides (imposed by $p$ and $q$) generate $n$ distinct points regularly distributed on the circular orbit of origin $0$ and of radius $J_n(p,q,r)^\frac12$. Then in order to make the sets of the $n$ points generated by $(p_1,q_1,r_1)$ and by $(p_2,q_2,r_2)$, say $S_1$ and $S_2$ respectively, coincide, a necessary condition is that these points lie on the same circular orbit of radius $J_n(p_1,q_1,r_1)^\frac12=J_n(p_2,q_2,r_2)^\frac12$. This condition being verified, $\sharp(S_1\cup S_2)=n$ if and only if the arc length $d_{P_1,P_2}$ between two points $P_1$ and $P_2$ lying respectively in $S_1$ and $S_2$ is a multiple of $J^\frac12\frac{2\pi}{n}$. 
		\item This point is a direct consequence of the previous result.
	\end{enumerate}
\end{proof}
Let us assume that $(p_1,q_1,r_1)\equiv (p_2,q_2,r_2)$ then the intersection points generated by using $(i_1,p_1,q_1,r_1)$ and $(i_2,p_2,q_2,r_2)$ are respectively
\begin{center}
	$z_{i_1,i_1+p_1,i_1+p_1+r_1,i_1+p_1+q_1+r_1}$ and $z_{i_2,i_2+p_2,i_2+p_2+r_2,i_2+p_2+q_2+r_2}$.
\end{center} 
Obviously, a sufficient condition for such two points to be equal is that the corresponding indices are equal, i.e. $i_2=i_1\pm\rho$. In order to determine if $i_2=i_1+\rho$ or $i_2=i_1-\rho$, it suffices to check one of the two equalities
\begin{center} 
	$z_{i_1+\rho,i_1+\rho+p_2,i_1+\rho+p_2+r_2,i_1+\rho+p_2+q_2+r_2}=z_{i_1,i_1+p_1,i_1+p_1+r_1,i_1+p_1+q_1+r_1}$,\\
	$z_{i_1-\rho,i_1-\rho+p_2,i_1-\rho+p_2+r_2,i_1-\rho+p_2+q_2+r_2}=z_{i_1,i_1+p_1,i_1+p_1+r_1,i_1+p_1+q_1+r_1}$.
\end{center} 
Depending on the case, we shall note
\begin{equation}
	 (p_1,q_1,r_1)\equiv(p_2,q_2,r_2)~~[\pm\rho].
	 \label{notation_rho}
\end{equation} 

\noindent Naturally, the following results hold
\begin{proposition}$~$\vspace{-0.5cm}\\
\begin{enumerate}
	\item $(p_1,q_1,r_1)\equiv(p_2,q_2,r_2)~~[\rho]\Leftrightarrow (p_2,q_2,r_2)\equiv(p_1,q_1,r_1)~~[-\rho]$.
	\item If $(p_1,q_1,r_1)\equiv(p_2,q_2,r_2)~~[\rho_1]$ and $(p_2,q_2,r_2)\equiv(p_3,q_3,r_3)~~[\rho_2]$, then $(p_1,q_1,r_1)\equiv(p_3,q_3,r_3)~~[\rho_1+\rho_2]$.
\end{enumerate}
\label{prop4.2}
\end{proposition}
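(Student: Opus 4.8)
The plan is to unwind the notation in \eqref{notation_rho}: the relation $(p_1,q_1,r_1)\equiv(p_2,q_2,r_2)\ [\rho]$ means precisely that for every base index $i$, the intersection point generated by the quadruplet $(i,i+p_1,i+p_1+r_1,i+p_1+q_1+r_1)$ coincides with the one generated by $(i+\rho,i+\rho+p_2,i+\rho+p_2+r_2,i+\rho+p_2+q_2+r_2)$, where the shift $\rho$ is taken modulo $n$. Both assertions of Proposition \ref{prop4.2} then become bookkeeping statements about this "shift-by-$\rho$" correspondence between the two orbits of $n$ points, and the proof amounts to composing these correspondences and reading off the resulting shift.

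For the first item, I would argue as follows. Assume $(p_1,q_1,r_1)\equiv(p_2,q_2,r_2)\ [\rho]$, so $z_{i,i+p_1,i+p_1+r_1,i+p_1+q_1+r_1}=z_{i+\rho,\ldots}$ for all $i$. Since both triplets span the same set $S$ of $n$ cocyclic points and the correspondence is a bijection of $S$ induced by the central rotation of angle $\tfrac{2\pi\rho}{n}$ (by Proposition \ref{prop4.1}(1), $d_{P_1,P_2}J^{-1/2}=\rho\cdot\tfrac{2\pi}{n}$), applying the inverse rotation identifies the point generated by $(i',i'+p_2,\ldots)$ with the one generated by $(i'-\rho,i'-\rho+p_1,\ldots)$; substituting $i=i'-\rho$ (legitimate since indices run over all residues modulo $n$) gives exactly $(p_2,q_2,r_2)\equiv(p_1,q_1,r_1)\ [-\rho]$. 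The converse is the same computation read backwards, so the equivalence follows.

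For the second item, from $(p_1,q_1,r_1)\equiv(p_2,q_2,r_2)\ [\rho_1]$ we have, for every $i$, the point from $(i,\ldots)$ with triplet $1$ equals the point from $(i+\rho_1,\ldots)$ with triplet $2$; and from $(p_2,q_2,r_2)\equiv(p_3,q_3,r_3)\ [\rho_2]$, specialising the base index to $i+\rho_1$, the point from $(i+\rho_1,\ldots)$ with triplet $2$ equals the point from $(i+\rho_1+\rho_2,\ldots)$ with triplet $3$. Chaining these two equalities shows that the point from $(i,\ldots)$ with triplet $1$ equals the point from $(i+(\rho_1+\rho_2),\ldots)$ with triplet $3$ for every $i$, which is the assertion $(p_1,q_1,r_1)\equiv(p_3,q_3,r_3)\ [\rho_1+\rho_2]$, the shift being again understood modulo $n$.

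The main thing to be careful about is not a computational obstacle but a well-definedness point: one must check that the sign $\pm\rho$ in \eqref{notation_rho} is genuinely determined (i.e. that a single $\rho$, defined modulo $n$, works uniformly across all base indices $i$, not a different one for each $i$), so that the composition of shifts makes sense; this is exactly what the discussion preceding \eqref{notation_rho} establishes, since equality of the two point-sets forces the rotation relating them to be a fixed element of the cyclic group of order $n$. Once that is granted, both parts are immediate, which is why the paper labels them "the following results hold" without elaboration.
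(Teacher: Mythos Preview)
Your argument is correct and is precisely the unwinding of \eqref{notation_rho} that the paper has in mind; the paper itself offers no proof beyond the sentence ``The proof of these results is obvious,'' so you have simply made explicit the substitution $i\mapsto i-\rho$ for part~1 and the chaining of equalities for part~2 that this remark is pointing to.
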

The proof of these results is obvious.\\

We know that the number of distinct pairs $\{i,j\}$ characterizing a straight line $\mathcal{D}_{i,j}$ passing through a specific intersection point defines its multiplicity. If $(p_1,q_1,r_1)\equiv(p_2,q_2,r_2)~~[\rho]$, the point $z_{i_1,i_1+p_1,i_1+p_1+r_1,i_1+p_1+q_1+r_1}$, $i_1\in\{0,\ldots,n-1\}$ is common to the four straight lines 
\begin{center}
$\mathcal{D}_{i_1,i_1+p_1}$, $\mathcal{D}_{i_1+p_1+r_1,i_1+p_1+q_1+r_1}$, $\mathcal{D}_{i_1+\rho,i_1+\rho+p_2}$ and $\mathcal{D}_{i_1+\rho+p_2+r_2,i_1+\rho+p_2+q_2+r_2}$.
\end{center}
So, in order to determine the multiplicity of each of the $n$ points defined by $(p_1,q_1,r_1)$ or $(p_2,q_2,r_2)$, it suffices to enumerate the distinct pairs of indices characterizing the straight lines among the $4=2^2$ possible choices. When $k$ triplets are equivalent, we may theoretically have a maximum of $2^k$ distinct pairs. Nevertheless, Poonen and Rubinstein \cite{PR} have proved that the number of $\ell$-tuples of diagonals which meet at a point inside the $n$-gon other than the center never exceed 7 what we will observe in our numerical experiments. Therefore, no matter the number of equivalent triplets, the number of distinct pairs of indices characterizing the straight lines intersecting at an internal intersection point will not exceed 7.   

\begin{proposition}
For all integers $p,q,r$ characterized through Definition \ref{ddefpqrs}, we have
		\begin{equation}
			(p,q,r)\equiv(q,p,\underbrace{n-p-q-r}_{s})\qquad [p+r].
			\label{eqpqr0}
		\end{equation}
	\label{prop4.3}
\end{proposition}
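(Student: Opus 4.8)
The plan is to establish the equivalence directly, by producing — for every starting vertex — a single intersection point that belongs simultaneously to the $n$-point family of $(p,q,r)$ and to the $n$-point family of $(q,p,s)$, and then reading off the shift of starting indices. First I would recall that, by Lemma \ref{lemma2.4} and the discussion following it, the triplet $(p,q,r)$ together with a starting index $i\in\{0,\ldots,n-1\}$ produces the quadruplet $(i,i+p,i+p+r,i+p+q+r)$ and highlights the intersection point $\mathcal{D}_{i,i+p}\cap\mathcal{D}_{i+p+r,i+p+q+r}$, the path-lengths $p$ and $q$ being attached respectively to these two lines. Since $1\le p,q\le n-1$ by Definition \ref{ddefpqrs}, Lemma \ref{lemma2.4} applies equally to the triplet $(q,p,s)$, with $s$ sitting in the (unrestricted) middle slot.

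Then I would apply this description to $(q,p,s)$ with the \emph{shifted} starting index $i+p+r$. The associated quadruplet is $(i+p+r,\ i+p+r+q,\ i+p+r+q+s,\ i+p+r+q+s+p)$, and the point it highlights is $\mathcal{D}_{i+p+r,\,i+p+r+q}\cap\mathcal{D}_{i+p+r+q+s,\,i+p+r+q+s+p}$. Substituting $s=n-p-q-r$ and reducing all indices modulo $n$ gives $i+p+r+q+s\equiv i$ and $i+p+r+q+s+p\equiv i+p$, while $i+p+r+q=i+p+q+r$; hence this point equals $\mathcal{D}_{i+p+r,\,i+p+q+r}\cap\mathcal{D}_{i,\,i+p}$, which is exactly the point highlighted by $(p,q,r)$ at the starting index $i$ (the intersection of two lines being insensitive to the order in which one lists them).

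Finally, letting $i$ run over $\{0,\ldots,n-1\}$ shows that the $n$-point family generated by $(q,p,s)$ is obtained from the one generated by $(p,q,r)$ by the translation $i\mapsto i+p+r$ of the starting vertex; in particular the two $n$-element sets coincide, so the triplets are equivalent in the sense of Definition \ref{eqtriplets} (the necessary condition $J_n(p,q,r)=J_n(q,p,s)$ being in any case already contained in Proposition \ref{prop3.1}, which yields $J_n(p,q,r)=J_n(q,p,r)=J_n(q,p,s)$). Tracking the correspondence of starting indices, the shift is $\rho=p+r$ with the $+$ sign of the convention \eqref{notation_rho}, whence $(p,q,r)\equiv(q,p,s)~[p+r]$. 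The only place where care is needed is the bookkeeping of which path-length occupies which slot when Lemma \ref{lemma2.4} is fed $(q,p,s)$ and the routine reduction modulo $n$ of the last two indices; I do not expect any genuine obstacle beyond that.
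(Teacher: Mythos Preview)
Your argument is correct and is actually more economical than the paper's. You verify directly, via Lemma \ref{lemma2.4}, that the quadruplet produced by $(q,p,s)$ at starting index $i+(p+r)$ is the relabelling $(i+p+r,\,i+p+q+r,\,i,\,i+p)$ of the quadruplet produced by $(p,q,r)$ at starting index $i$; hence the two families of $n$ points coincide and the shift is $p+r$. The paper instead inserts $(p_1,q_1,r_1)=(p,q,r)$ and $(p_2,q_2,r_2)=(q,p,s)$ into the angular-distance formula \eqref{dP1P2}--\eqref{AP1P2}, carries out a trigonometric simplification to obtain $d_{P_1,P_2}=J^{1/2}(p+r)\tfrac{2\pi}{n}$, invokes Proposition \ref{prop4.1} for the equivalence, and only \emph{afterwards} reads off the sign of the shift by the very index computation you use from the start. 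So your route bypasses the trigonometry entirely; what the paper's route buys is an illustration of how the $d_{P_1,P_2}$ machinery works, which is the template later needed for equivalences such as those in Proposition \ref{prop4.6} where no transparent index-matching is available.
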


\begin{proof}
In order to use (\ref{dP1P2}) and (\ref{AP1P2}) we consider $(p_1,q_1,r_1)=(p,q,r)$ and $(p_2,q_2,r_2)=(q,p,r)$ and we compute
	\begin{itemize}
		\item[.] $\sin((p_1+q_1+2r_1)\frac{\pi}{n})=\sin((p+q+2r)\frac{\pi}{n})$,
		\item[.] $\sin((p_2+q_2+2r_2)\frac{\pi}{n})=-\sin((p+q+2r)\frac{\pi}{n})$,
		\item[.] $\cos((p_1-p_2)\frac{\pi}{n})=\cos((p-q)\frac{\pi}{n})$,
		\item[.] $-\cos((p_1-(2p_2+q_2+2r_2))\frac{\pi}{n})=-\cos(2(p+r)\frac{\pi}{n})$,
		\item[.] $-\cos((2p_1+q_1+2r_1)-p_2)\frac{\pi}{n})=-\cos(2(p+r)\frac{\pi}{n})$,
		\item[.] $\cos((2(p_1-p_2)+(q_1-q_2)+2(r_1-r_2))\frac{\pi}{n})=\cos((3p+q+4r)\frac{\pi}{n})$
	\end{itemize}
Then, 
\[d_{P_1,P_2}=J^\frac12\cos^{-1}\left(\frac{-1}{J\sin^2((p+q+2r)\frac{\pi}{n})}\times\right.\]
\[\left(\begin{pmatrix}\cos(q\frac{\pi}{n}) & \cos(p\frac{\pi}{n})\end{pmatrix}
\begin{pmatrix}\cos((3p+q+4r)\frac{\pi}{n}) & -\cos(2(p+r)\frac{\pi}{n})\\-\cos(2(p+r)\frac{\pi}{n}) & \cos((p-q)\frac{\pi}{n})\end{pmatrix}\begin{pmatrix}\cos(p\frac{\pi}{n})\\\cos(q\frac{\pi}{n})\end{pmatrix}\right)\]
\[=J^\frac12\cos^{-1}\left(\frac{-1}{J\sin^2((p+q+2r)\frac{\pi}{n})}\times\right.\]
\[\left.\left(2\cos\left(p\frac{\pi}{n}\right)\cos\left(q\frac{\pi}{n}\right)\cos\left(2(p+r)\frac{\pi}{n}\right)\cos\left((p+q+2r)\frac{\pi}{n}\right)-\cos\left(2(p+r)\frac{\pi}{n}\right)\left(\cos^2\left(p\frac{\pi}{n}\right)+\cos^2\left(q\frac{\pi}{n}\right)\right)\right)\right)\]		
\[=J^\frac12\cos^{-1}\left(\cos\left(2(p+r)\frac{\pi}{n}\right)\right)=J^\frac12(p+r)\frac{2\pi}{n}.\]
Help to Proposition \ref{prop4.1}, we deduce that $(p,q,r)\equiv(q,p,n-(p+q+r))$.
This result means first that the $n$ points generated by $(p,q,r)$ and $(q,p,s)$ coincide and second, that the two intersection points generated by the two quadrangles defined by $i_1=i_2=0$ are distant from each other of an angular distance $d_{P_1,P_2}$. So if we want that $P_1$ and $P_2$ (and the $2n-2$ comparable intersection points) coincide, we have to choose $i_2=i_1+(p+r)$ since  $(i_2,i_2+q,i_2+n-p-r,i_2+n-r)=(i_1+p+r,i_1+p+q+r,i_1,i_1+p)$.
\end{proof}
\begin{proposition}
	If $\mathcal{Q}$ is complex,
	\begin{alignat}{2}
		(p,q,r) & \equiv (q,n-p,-q-r) \quad & & [p+r]\label{eqpqr1}\\
		        & \equiv (n-p,n-q,p+q+r-n) \quad & & [p]\label{eqpqr2}\\
		        & \equiv (n-q,p,-p-r) \quad & & [p+q+r]\label{eqpqr3}
	\end{alignat}
\label{prop4.4}
\end{proposition}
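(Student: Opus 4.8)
The plan is to prove each of the three congruences by the very device already used for Proposition~\ref{prop4.3}: fix $(p_1,q_1,r_1)=(p,q,r)$, take $(p_2,q_2,r_2)$ to be the triplet on the right-hand side, first check that $J_n$ is preserved, then compute the arc length $d_{P_1,P_2}$ between the two ``base'' intersection points (those obtained with $i_1=i_2=0$) from formulas~(\ref{dP1P2})--(\ref{AP1P2}), and finally invoke Proposition~\ref{prop4.1}(1). The equality $J_n(p_2,q_2,r_2)=J_n(p,q,r)$ is immediate from the shape of~(\ref{Jnmod}): in each case the pair $\{\cos^2(p_2\tfrac\pi n),\cos^2(q_2\tfrac\pi n)\}$ equals $\{\cos^2(p\tfrac\pi n),\cos^2(q\tfrac\pi n)\}$ (each occurrence of $n-p$ or $n-q$ merely flips a sign inside a cosine that is then squared), while $\cos((p_2+q_2+2r_2)\tfrac\pi n)=\pm\cos((p+q+2r)\tfrac\pi n)$ and $\sin^2((p_2+q_2+2r_2)\tfrac\pi n)=\sin^2((p+q+2r)\tfrac\pi n)$, so the mixed term matches once the accumulated sign flips are tracked.

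For the arc length, substitute the candidate into~(\ref{dP1P2}) with $i_1=i_2=0$. As in the proof of Proposition~\ref{prop4.3}, the six scalars feeding~(\ref{AP1P2}) reduce, modulo $2\pi$, to cosines of simple combinations of $p,q,r$ and of the target shift $\rho$, where $\rho=p+r$ for~(\ref{eqpqr1}), $\rho=p$ for~(\ref{eqpqr2}) and $\rho=p+q+r$ for~(\ref{eqpqr3}); one also finds $\sin\theta_1\sin\theta_2=\pm\sin^2((p+q+2r)\tfrac\pi n)$, a sign which must be carried carefully since it is exactly what will decide $\rho$ versus $-\rho$ modulo $n$. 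Expanding the bilinear form $\begin{pmatrix}\cos(p_2\frac\pi n)&\cos(q_2\frac\pi n)\end{pmatrix}A_{P_1,P_2}\begin{pmatrix}\cos(p_1\frac\pi n)\\\cos(q_1\frac\pi n)\end{pmatrix}$ and using $2\cos a\cos b=\cos(a-b)+\cos(a+b)$, everything collapses --- through the very definition~(\ref{Jnmod}) of $J_n$ --- to $J\sin\theta_1\sin\theta_2\cos(2\rho\tfrac\pi n)$, so that $d_{P_1,P_2}=J^{\frac12}\cos^{-1}(\cos(2\rho\tfrac\pi n))=J^{\frac12}\,\rho\,\tfrac{2\pi}{n}$. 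Proposition~\ref{prop4.1}(1) then yields $(p,q,r)\equiv(p_2,q_2,r_2)$ with shift $\pm\rho$ modulo $n$.

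It remains to fix the sign, i.e.\ to show the shift is $[+\rho]$, which is done by the index test described right after Proposition~\ref{prop4.1}. With $i_1=0$ the $(p,q,r)$-point is $\mathcal{D}_{0,p}\cap\mathcal{D}_{p+r,\,p+q+r}$; forming, via Lemma~\ref{lemma2.4}, the quadruplet attached to $(p_2,q_2,r_2)$ at index $m=\rho$ and reducing its entries modulo $n$ gives a reordering of $(0,p,p+r,p+q+r)$: concretely $(p+r,\,p+q+r,\,p,\,0)$ for~(\ref{eqpqr1}), $(p,\,0,\,p+q+r,\,p+r)$ for~(\ref{eqpqr2}), and $(p+q+r,\,p+r,\,0,\,p)$ for~(\ref{eqpqr3}). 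In each of these the two highlighted lines (those given by the first two and by the last two components) are exactly $\mathcal{D}_{p+r,p+q+r}$ and $\mathcal{D}_{0,p}$, hence the $(p_2,q_2,r_2)$-point at index $\rho$ coincides with the $(p,q,r)$-point at index $0$; since the $n$ points generated by each triplet form a single orbit of the rotation group of order $n$, rotating this coincidence through the whole group gives the three set-congruences with the stated shifts, Proposition~\ref{prop4.2} keeping the bookkeeping of shifts consistent. (This last paragraph in fact proves the equivalences on its own; I would keep the arc-length computation only because it is the bridge to Proposition~\ref{prop4.1} in the paper's language. The hypothesis that $\mathcal{Q}$ be complex is used, through Lemma~\ref{lemma2.5} (so that $rs<0$), only to guarantee that the three candidate triplets are themselves of complex type and fall in the admissible range, so that~(\ref{eqpqr1})--(\ref{eqpqr3}) are genuine relations between orbit-triplets.)

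The main obstacle is the trigonometric bookkeeping in the arc-length step: three separate rounds, each requiring the six arguments entering $A_{P_1,P_2}$ to be reduced modulo $2\pi$ (with the attendant $-\cos$/$+\cos$ flips) and the resulting quadratic form to be recognised as a multiple of $J_n(p,q,r)$. The genuinely delicate point there is the sign of $\sin\theta_1\sin\theta_2$, which distinguishes $\rho$ from $-\rho$; the index-test route avoids this at the cost of a purely combinatorial check modulo $n$, whose only hypotheses --- $\sharp\{i,j,k,\ell\}=4$ (Lemma~\ref{lemCard3}) and that no two of the four lines are parallel --- are already part of the standing assumptions.
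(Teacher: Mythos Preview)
Your proposal is correct; the index-test paragraph alone already constitutes a complete and clean proof of all three congruences with the stated shifts, and your verification of the reordered quadruplets is accurate in each case.

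The paper's own proof takes a different and more structural route. It establishes only the first equivalence~(\ref{eqpqr1}) by the arc-length computation \`a la Proposition~\ref{prop4.3}, then observes that the map $T:(p,q,r)\mapsto(q,n-p,-q-r)$ satisfies $T^{(4)}=\mathrm{Id}$ and that (\ref{eqpqr2}) and (\ref{eqpqr3}) are exactly $T^{2}$ and $T^{3}$ applied to $(p,q,r)$; the remaining two equivalences then follow from (\ref{eqpqr1}) by transitivity (Proposition~\ref{prop4.2}), the shifts being added as $[p+r]$, $[p+r]+[q+r-q-r]=[p]$, etc. This iteration trick reduces the trigonometric labour to a single case and exhibits a cyclic symmetry of order~$4$ acting on the complex-type triplets, which you do not make explicit. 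Conversely, your index-test argument bypasses all trigonometry and pins down the sign of the shift directly, whereas the paper's route still leaves that sign to be resolved by the same index check you perform. Both approaches use the complexity hypothesis in the same way you describe: the right-hand side triplets have $r'<0$ (or $r's'<0$), so they are only admissible when $\mathcal{Q}$ is complex.
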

\begin{proof}
	We know that when $\mathcal{Q}$ is simple, it is characterized by triplets $(p,q,r)$ such that $r$ and $s$ are positive. Then, when $\mathcal{Q}$ is simple, the equivalences (\ref{eqpqr1}),(\ref{eqpqr2}) and (\ref{eqpqr3}) are impossible. Indeed, if $r>0$, $-q-r<0$, $p+q+r-n<0$ and $-p-r<0$ which is incompatible with the fact that $\mathcal{Q}$ is simple.\\
	The first equivalence (\ref{eqpqr1}) may be proved in the same way as in Proposition \ref{prop4.3}. Next, if we define the mapping $T:(p,q,r)\rightarrow (q,n-p,-q-r)$, the two other equivalences (\ref{eqpqr2}) and (\ref{eqpqr3}) are obtained by considering the composition of $T$ with itself two and three times respectively. We note also that $T^{(4)}=Id$.
\end{proof}
The dimension $n$ being given and the three equivalences (\ref{eqpqr1}), (\ref{eqpqr2}) and (\ref{eqpqr3}) holding, each internal intersection point is generated through only two straight lines which means that its multiplicity is equal to 2. Indeed, for $i\in\{0,\ldots,n-1\}$, the only two straight lines which generate $z_{i,i+p,i+p+r,i+p+q+r}$ are $\mathcal{D}_{i,i+p}$ and $\mathcal{D}_{i+p+r,i+p+q+r}$ no matter which equivalence is chosen, provided the shift $\rho$ mentioned in the equivalence is used wisely. 	
\begin{corollary}
	If $p+q=n$ then $\mathcal{Q}$ is complex and
	\begin{alignat}{2}
		(p,q,r)=(p,n-p,r) & \equiv(n-p,n-p,-n+p-r) \quad & & [p+r] \label{eqppr1a}\\
		 & \equiv (n-p,p,r) \quad & & [p]\label{eqppr2a}\\
		 & \equiv (p,p,-p-r) \quad & &  [r]\label{eqppr3a}		
	 \end{alignat}
\end{corollary}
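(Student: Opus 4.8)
The plan is to obtain the corollary as a direct specialization of Proposition \ref{prop4.4} to the case $p+q=n$, so essentially no new computation is needed. First I would verify that $p+q=n$ forces $\mathcal{Q}$ to be complex: since $s=n-(p+q+r)=-r$, we have $rs=-r^2<0$, and by the third point of Lemma \ref{lemma2.5} this is precisely the criterion for a complex quadrilateral (note $r\neq 0$ by the remark following Lemma \ref{lemma2.4}, since $r=0$ would force $\sharp\{i,j,k,\ell\}=3$). Hence the hypotheses of Proposition \ref{prop4.4} are met.

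Next I would substitute $q=n-p$ into each of the three equivalences of Proposition \ref{prop4.4}. For \eqref{eqpqr1}, the right-hand triplet $(q,n-p,-q-r)$ becomes $(n-p,n-p,-(n-p)-r)=(n-p,n-p,-n+p-r)$, with shift $[p+r]$, giving \eqref{eqppr1a}. For \eqref{eqpqr2}, the triplet $(n-p,n-q,p+q+r-n)$ becomes $(n-p,n-(n-p),n+r-n)=(n-p,p,r)$, with shift $[p]$, giving \eqref{eqppr2a}. For \eqref{eqpqr3}, the triplet $(n-q,p,-p-r)$ becomes $(n-(n-p),p,-p-r)=(p,p,-p-r)$, with shift $[p+q+r]=[n+r]$; reducing the shift modulo $n$ (the shift $\rho$ is defined modulo $n$, as stated after \eqref{notation_rho}) yields $[r]$, giving \eqref{eqppr3a}.

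The only mildly delicate point — and the place I would be most careful — is the bookkeeping on the shifts modulo $n$, particularly in the third line where $[p+q+r]$ collapses to $[r]$; one should confirm this is consistent with Proposition \ref{prop4.2}(2) applied along the chain $T, T^{(2)}, T^{(3)}$ of Proposition \ref{prop4.4}, so that the cumulative shifts $p+r$, then $(p+r)+\text{(shift of }T\text{ at the second triplet)}$, etc., indeed telescope to the values claimed. Since everything is a transparent substitution, the verification is routine; there is no genuine obstacle, and the corollary follows immediately.

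\begin{proof}
	Since $p+q=n$, we have $s=n-(p+q+r)=-r$, hence $rs=-r^2<0$ (and $r\neq 0$), so by the third point of Lemma \ref{lemma2.5} the quadrilateral $\mathcal{Q}$ is complex. Thus Proposition \ref{prop4.4} applies. Substituting $q=n-p$ into \eqref{eqpqr1} gives $(q,n-p,-q-r)=(n-p,n-p,-n+p-r)$ with shift $[p+r]$, which is \eqref{eqppr1a}. Substituting into \eqref{eqpqr2} gives $(n-p,n-q,p+q+r-n)=(n-p,p,r)$ with shift $[p]$, which is \eqref{eqppr2a}. Substituting into \eqref{eqpqr3} gives $(n-q,p,-p-r)=(p,p,-p-r)$ with shift $[p+q+r]=[n+r]\equiv[r]\pmod n$, which is \eqref{eqppr3a}.
\end{proof}
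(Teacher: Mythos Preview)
Your proof is correct and follows essentially the same approach as the paper: observe that $p+q=n$ gives $s=-r$ so $rs<0$ and $\mathcal{Q}$ is complex by Lemma \ref{lemma2.5}, then substitute $q=n-p$ into the three equivalences of Proposition \ref{prop4.4}. Your extra care with the shift reduction $[p+q+r]=[n+r]\equiv[r]$ modulo $n$ is a welcome clarification but does not depart from the paper's line of argument.
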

The proof is obvious by observing first that when $p+q=n$ then $r=-s$, i.e. $\mathcal{Q}$ is complex and next, by replacing $q$ by $n-p$ in Proposition \ref{prop4.4}. This result is not only a corollary of Proposition \ref{prop4.4}, it shows also that under specific conditions, two triplets $(p,q,r)$ and $(q,p,r)$ may be equivalent.
\begin{proposition}
If $n$ is even, we get the particular following equivalences:
\begin{alignat}{2}
	\left(p,\frac{n}{2},r\right) & \equiv \left(\frac{n}{2},p,r\right) \quad & & \left[(p+r)-\frac{n}{2}\right]\label{nsur21}\\	
	& \equiv \left(p,\frac{n}{2},r+\frac{n}{2}\right) \quad & & [0]\label{nsur22}\\
	& \equiv \left(p,p,2r\right) \quad & & \left[0\right]\label{nsur23}
\end{alignat}
\label{prop4.5}	
\end{proposition}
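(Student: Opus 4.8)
The plan is to reduce Proposition \ref{prop4.5} to the results already established in Proposition \ref{prop4.4} (and its corollary) by specializing to the case $q=n/2$, while treating the two degenerate phenomena that appear for this particular value of $q$ with care. First I would observe that taking $q=\frac n2$ forces $n$ even, and that the triplet $\left(p,\frac n2,r\right)$ corresponds to a quadrilateral for which $\delta(j,k)=\frac n2$ (up to the symmetries of $J_n$), so by the second bullet of Lemma \ref{lemma2.5} the sign of $r$ is a free choice; this is exactly what produces the equivalence \eqref{nsur22}, since replacing $r$ by $r+\frac n2$ (equivalently $-\frac n2$ when $r=\frac n2$, or the reflected choice in general) yields the same intersection points with shift $\rho=0$. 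Concretely, $J_n(p,\frac n2,r)$ depends on $r$ only through $\cos\!\big((p+n+2r)\frac\pi n\big)=-\cos\!\big((p+2r)\frac\pi n\big)$, and shifting $r\mapsto r+\frac n2$ leaves this unchanged; combined with an inspection of the coordinate formulas \eqref{xijkl1}--\eqref{yijkl1} (or simply the observation that the underlying pair of lines is literally the same) this gives \eqref{nsur22} with $\rho=0$.

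For \eqref{nsur23} I would again use that $s=n-(p+q+r)=\frac n2-p-r$ when $q=\frac n2$, so the generic equivalence $(p,q,r)\equiv(q,p,s)$ from Proposition \ref{prop4.3}, namely $(p,\frac n2,r)\equiv(\frac n2,p,\frac n2-p-r)$ with shift $[p+r]$, can be post-composed with \eqref{nsur22} applied to the second triplet (legitimately, since its middle entry is $\frac n2$) to turn its $r$-component $\frac n2 - p - r$ into $\frac n2 - p - r + \frac n2 = n-p-r$, and with the reflection symmetry $(p,q,r)\equiv(q,p,r)$-type moves available here; tracking the composed shifts via Proposition \ref{prop4.2} should land on $(p,p,2r)$ with $\rho=0$. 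Alternatively, and perhaps more cleanly, I would start from \eqref{eqppr3a} of the corollary, which reads $(p,n-p,r)\equiv(p,p,-p-r)\ [r]$, specialize nothing yet, and then note that for $q=\frac n2$ we have $n-p=q$ only when $p=\frac n2$; so that route needs the general-$p$ statement. The safest derivation of \eqref{nsur23} is therefore the direct computation: plug $(p_1,q_1,r_1)=(p,\frac n2,r)$ and $(p_2,q_2,r_2)=(p,p,2r)$ into \eqref{dP1P2}--\eqref{AP1P2}, check that $\sin((p_2+q_2+2r_2)\frac\pi n)=\sin((2p+4r)\frac\pi n)$ and that $J_n(p,p,2r)=J_n(p,\frac n2,r)$ (both follow from $\cos((2p+4r)\frac\pi n)$ appearing symmetrically — here one uses $\cos((p+\frac n2+2r)\frac\pi n)^2$ versus the $(p,p,2r)$ version), and verify the matrix product collapses to $\cos(0)$, giving $d_{P_1,P_2}=0$ and hence $\rho=0$.

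For \eqref{nsur21} the natural route is to specialize \eqref{eqpqr1} of Proposition \ref{prop4.4}: with $q=\frac n2$ it gives $(p,\frac n2,r)\equiv(\frac n2,n-p,-\frac n2-r)\ [p+r]$; then apply the middle-entry-$\frac n2$ move \eqref{nsur22}, which I will have already proven, to the target triplet to adjust its $r$-component by $\frac n2$, turning $-\frac n2-r$ into $-r$ and also, via the same freedom, allowing the first component $\frac n2$ and second component $n-p$ to be brought to the form $\left(\frac n2,p,r\right)$ after a further symmetry step; the shift then becomes $[p+r]+[\text{adjustment}]$. The bookkeeping of shifts is where I expect the main obstacle: one must be scrupulous that each application of \eqref{nsur22} contributes shift $0$, that Proposition \ref{prop4.3} contributes $[p+r]$ with the correct sign, and that the composition rule of Proposition \ref{prop4.2} is applied in the right order, since the claimed shift here is the slightly unusual $\left[(p+r)-\frac n2\right]$ rather than $[p+r]$ — the extra $-\frac n2$ must come from precisely one use of the $r\mapsto r+\frac n2$ identity being reinterpreted as a nonzero shift when we instead keep $r$ fixed and move the index. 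In short, the arithmetic on the $r$-components is routine, but getting the three $\rho$-values exactly right, and in particular explaining the $-\frac n2$ in \eqref{nsur21}, is the delicate part; I would verify all three shifts independently by the direct $d_{P_1,P_2}$ computation as a cross-check rather than relying solely on composing earlier equivalences.
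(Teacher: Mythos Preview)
The paper's own proof is a single sentence: ``obvious by using definitions \eqref{dP1P2} and \eqref{AP1P2} and trigonometric identities'', i.e.\ the direct computation of $d_{P_1,P_2}$ that you yourself call the ``safest'' route and the ``cross-check''. So the method you fall back on \emph{is} the intended one; the elaborate reductions you try first are not needed.

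That said, your primary plan has a genuine obstacle you do not flag. Your derivations of \eqref{nsur21} and (initially) \eqref{nsur23} invoke Proposition~\ref{prop4.4}, but that proposition is stated and proved only for \emph{complex} quadrilaterals. The triplet $(p,\tfrac n2,r)$ has $s=\tfrac n2-p-r$, and when $0<r$ and $0<s$ (equivalently $0<r<\tfrac n2-p$) the quadrilateral is simple, so \eqref{eqpqr1}--\eqref{eqpqr3} are unavailable. In that regime your chain for \eqref{nsur21} breaks at the very first step. The Corollary you cite for \eqref{nsur23} is likewise restricted to the complex situation $p+q=n$, which here would force $p=\tfrac n2$; it does not cover general $p$.

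Your handling of \eqref{nsur22} is correct and is in fact cleaner than a trigonometric verification: for $(p,\tfrac n2,r)$ the second line $\mathcal{D}_{i+p+r,\,i+p+r+n/2}$ is a diameter, and replacing $r$ by $r+\tfrac n2$ simply swaps its two endpoints, so the pair of lines is literally unchanged and $\rho=0$. A similarly clean geometric argument is available for \eqref{nsur23}: the chord $\mathcal{D}_{p+2r,\,2p+2r}$ is the mirror image of $\mathcal{D}_{0,p}$ across that same diameter, so their intersection lies on the diameter and hence coincides with the $(p,\tfrac n2,r)$ intersection point; this gives $\rho=0$ without any computation. For \eqref{nsur21}, however, no such shortcut is apparent, and the honest route---as you eventually concede---is to plug $(p_1,q_1,r_1)=(p,\tfrac n2,r)$ and $(p_2,q_2,r_2)=(\tfrac n2,p,r)$ into \eqref{AP1P2}, use $\cos(\tfrac{\pi}{2})=0$ to kill half the terms, and read off $\cos^{-1}\!\big(\cos(2((p+r)-\tfrac n2)\tfrac{\pi}{n})\big)$.
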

The proof of these results is obvious by using definitions (\ref{dP1P2}) and (\ref{AP1P2}) and trigonometric identities. 
\begin{remark}
	The dimension $n$ even being given and the two equivalences (\ref{nsur21}) and (\ref{nsur23}) holding, each intersection point is generated through at most three straight lines which means that its multiplicity is less or equal to 3. Indeed, for $i\in\{0,\ldots,n-1\}$, the only three straight lines which may generate $z_{i,i+p,i+p+r,i+p+\frac{n}{2}+r}$ are $\mathcal{D}_{i,i+p}$, $\mathcal{D}_{i+p+r,i+p+\frac{n}{2}+r}$ and $\mathcal{D}_{i+p+2r,i+2p+2r}$ no matter which equivalence is chosen, provided the mentioned shift is used. Nevertheless, it may happen that $i+p+2r=i\Leftrightarrow p+2r\equiv 0\mbox{ mod }n$ and in this case, only two straight lines generate the intersection point. Finally $z_{i,i+p,i+p+r,i+p+\frac{n}{2}+r}$ is of multiplicity $3$ iff $p+2r\neq 0$ modulo $n$.	
\end{remark}
So far, the relationships occuring in Propositions \ref{prop4.2}, \ref{prop4.3} and especially \ref{prop4.4} state that the multiplicity of any intersection point generated through these equivalences does not exceed 3. Nevertheless, Poonen and Rubinstein have proved in \cite{PR}, for example when $n=12$, that the multiplicity of $12$ intersection points is equal to $4$, which suggests that there exist other equivalences between the triplets $(p,q,r)$. However they look very particular and specific to some particular values of $n$. For example, by studying carefully the cases when $n$ is a multiple of $12$, we may state the following results.
\begin{proposition}
	If $n$ is a multiple of $12$, additionally to the equivalences generated through Propositions \ref{prop4.2}, \ref{prop4.3} and \ref{prop4.4}, we have
	\begin{itemize}
		\item if $\mathcal{Q}$ is simple
	\begin{alignat}{2}
		\left(\frac{n}{12},\frac{5n}{12},\frac{2n}{12}\right) & \equiv \left(\frac{7n}{12},\frac{2n}{12},\frac{n}{12}\right)\quad & & \left[\frac{-4n}{12}\right]\label{n12a}\\
		& \equiv \left(\frac{n}{12},\frac{2n}{12},\frac{3n}{12}\right)\quad & & \left[0\right]\label{n12b}\\
		\left(\frac{n}{12},\frac{2n}{12},\frac{2n}{12}\right) & \equiv \left(\frac{n}{12},\frac{7n}{12},\frac{n}{12}\right)\quad & & \left[0\right]\label{n12c}\\
		& \equiv \left(\frac{2n}{12},\frac{5n}{12},\frac{4n}{12}\right)\quad & & \left[\frac{3n}{12}\right]\label{n12d}\\
		\left(\frac{n}{12},\frac{n}{12},\frac{3n}{12}\right) & \equiv \left(\frac{n}{12},\frac{4n}{12},\frac{2n}{12}\right)\quad & & \left[0\right]\label{n12e}\\
		& \equiv \left(\frac{n}{12},\frac{8n}{12},\frac{n}{12}\right)\quad & & \left[0\right]\label{n12f}\\
		& \equiv \left(\frac{4n}{12},\frac{4n}{12},\frac{n}{12}\right)\quad & & \left[\frac{-2n}{12}\right]\label{n12g}				
	\end{alignat}
		\item if $\mathcal{Q}$ is complex
	\begin{alignat}{2}			
		\left(\frac{4n}{12},\frac{4n}{12},\frac{-n}{12}\right) & \equiv \left(\frac{4n}{12},\frac{3n}{12},\frac{-2n}{12}\right)\quad & & \left[0\right]\label{n12h}\\
		& \equiv \left(\frac{3n}{12},\frac{4n}{12},\frac{-2n}{12}\right)\quad & & \left[\frac{2n}{12}\right]\label{n12i}\\
		\left(\frac{7n}{12},\frac{7n}{12},\frac{-3n}{12}\right) & \equiv \left(\frac{8n}{12},\frac{8n}{12},\frac{-7n}{12}\right) \quad & & \left[\frac{n}{12}\right]\label{n12j}\\
		& \equiv \left(\frac{5n}{12},\frac{8n}{12},\frac{-2n}{12}\right)\quad & & \left[-\frac{n}{12}\right]\label{n12k}\\
		& \equiv \left(\frac{8n}{12},\frac{5n}{12},\frac{-2n}{12}\right)\quad & & \left[\frac{n}{12}\right]\label{n12l}\\		
		& \equiv \left(\frac{7n}{12},\frac{8n}{12},\frac{-5n}{12}\right)\quad & & \left[0\right]\label{n12m}\\
		& \equiv \left(\frac{8n}{12},\frac{7n}{12},\frac{-5n}{12}\right)\quad & & \left[\frac{n}{12}\right]\label{n12n}		
	\end{alignat}	
\end{itemize}
\label{prop4.6}
\end{proposition}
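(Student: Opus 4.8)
The plan is to prove each of the fourteen displayed equivalences by exactly the mechanism already used for Propositions \ref{prop4.3} and \ref{prop4.4}. For the two triplets $(p_1,q_1,r_1)$ and $(p_2,q_2,r_2)$ occurring on a given line I would first check that they lie on a common orbit, i.e. $J_n(p_1,q_1,r_1)=J_n(p_2,q_2,r_2)=J$; this is frequently immediate from Proposition \ref{prop3.1} (which already gives $J_n(p,q,r)=J_n(q,p,r)=J_n(p,q,s)$) and otherwise amounts to the elementary observations $\cos((p_1+q_1+2r_1)\frac\pi n)=\pm\cos((p_2+q_2+2r_2)\frac\pi n)$ and $\{\pm\cos(p_1\frac\pi n),\pm\cos(q_1\frac\pi n)\}=\{\pm\cos(p_2\frac\pi n),\pm\cos(q_2\frac\pi n)\}$. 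Then one takes $i_1=i_2=0$, writes the two representative points via Lemma \ref{lemma2.4}, and evaluates the angular distance $d_{P_1,P_2}$ using formulas (\ref{dP1P2}) and (\ref{AP1P2}); by Proposition \ref{prop4.1} the equivalence $(p_1,q_1,r_1)\equiv(p_2,q_2,r_2)\;[\rho]$ holds precisely when this distance equals $J^\frac12\rho\frac{2\pi}{n}$ for the integer $\rho$ shown in brackets.

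The feature that makes the statement clean, and the computation uniform, is the hypothesis $12\mid n$: every entry $p$, $q$, $r$ appearing in the proposition is an integer multiple of $\frac n{12}$, hence every angle $p\frac\pi n$, $q\frac\pi n$, $(p+q+2r)\frac\pi n$, $(p_1-p_2)\frac\pi n$, $(2(p_1-p_2)+(q_1-q_2)+2(r_1-r_2))\frac\pi n$, etc.\ is an integer multiple of $\frac\pi{12}$. Therefore in each case the verification of the $J_n$-equality and the evaluation of the argument of $\cos^{-1}$ in (\ref{dP1P2}) collapse to a trigonometric identity among cosines and sines of multiples of $\frac\pi{12}$, valid for every such $n$ — the actual value of $n$ drops out. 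Each such identity is confirmed either by inserting the closed forms of $\cos\frac{k\pi}{12}$ and $\sin\frac{k\pi}{12}$, or more economically by the same run of product-to-sum manipulations that closes the proof of Proposition \ref{prop4.3}, where one ends on $d_{P_1,P_2}=J^\frac12\cos^{-1}(\cos(2(p+r)\frac\pi n))=J^\frac12(p+r)\frac{2\pi}{n}$.

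Two organizational devices reduce the workload. First, within each of the five families I would prove only the consecutive equivalences and then chain them with Proposition \ref{prop4.2}(2), the bracketed shifts adding along the chain, which also fixes the signs $\pm\rho$. Second, before invoking (\ref{dP1P2}) one should record for each family that the signs of $r$ and $s$ match Lemma \ref{lemma2.5} — all the entries in the simple families have $r,s>0$, while in the complex families $rs<0$ — so that the triplets genuinely parametrize admissible quadrilaterals of the asserted type. Finally, to confirm the stated $\rho$ one checks directly, as in the last paragraph of the proof of Proposition \ref{prop4.3}, that the quadruplet $(i_2,i_2+p_2,i_2+p_2+r_2,i_2+p_2+q_2+r_2)$ with $i_2=\rho$ is, modulo $n$, a reordering of $(0,p_1,p_1+r_1,p_1+q_1+r_1)$ carrying the same pair of pairs $\{\{i,j\},\{k,\ell\}\}$, i.e.\ the same two straight lines through the point.

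The main obstacle is not conceptual but one of organization and volume: unlike Proposition \ref{prop4.4}, where the single cyclic map $T$ of order $4$ generates everything, there is no apparent group action binding these fourteen relations, so each of the five families must be treated separately and the case analysis is unavoidable. The delicate bookkeeping issue is keeping the convention $r=\Delta(j,k)$ versus $r=\delta(j,k)$ straight when passing between simple and complex configurations, and tracking the shifts $\rho$ consistently modulo $n$: an error there would preserve the $J_n$-equality but destroy the claimed coincidence of intersection points. Once the five families are checked, one should also remark — this is the motivation behind the proposition — that combining, say, the three equivalences (\ref{n12e})–(\ref{n12g}) at a common point of $\mathcal{K}_n$ yields four distinct concurrent lines, accounting for the multiplicity-$4$ points found by Poonen and Rubinstein for $n$ a multiple of $12$.
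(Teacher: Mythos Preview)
Your approach is essentially identical to the paper's: the authors verify only (\ref{n12a}) in detail---checking $J_n(p_1,q_1,r_1)=J_n(p_2,q_2,r_2)=4+\sqrt{3}$, then plugging the explicit values of $\cos\frac{k\pi}{12}$, $\sin\frac{k\pi}{12}$ into (\ref{dP1P2}) and (\ref{AP1P2}) to obtain $d_{P_1,P_2}=J^{1/2}\cos^{-1}(-\tfrac12)$---and declare the remaining thirteen ``proved similarly''. Your observation that every angle is an integer multiple of $\pi/12$, so the computation is independent of $n$, is exactly what makes the paper's single-case verification legitimate.

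One point to correct: your final verification step, checking that the shifted quadruplet $(i_2,i_2+p_2,i_2+p_2+r_2,i_2+p_2+q_2+r_2)$ is ``a reordering of $(0,p_1,p_1+r_1,p_1+q_1+r_1)$ carrying the same pair of pairs'', works for Proposition~\ref{prop4.3} but \emph{cannot} work here. The entire content of Proposition~\ref{prop4.6} is that these equivalences are \emph{additional} to those of Propositions~\ref{prop4.3}--\ref{prop4.5}: the two quadruplets determine genuinely distinct pairs of lines through the same point, which is precisely how the multiplicity rises to $3$ or $4$. So to fix the sign of $\rho$ you must instead check numerically (or via coordinates) which of $z_{0,p_1,p_1+r_1,p_1+q_1+r_1}=z_{\pm\rho,\pm\rho+p_2,\ldots}$ holds, as the paper describes just after Proposition~\ref{prop4.1}; a combinatorial reordering check will simply fail.
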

Note that all these equivalences are still true when we permute the two first components in each triplet provided the shift is adapted. For example, $\left(\frac{5n}{12},\frac{n}{12},\frac{2n}{12}\right)\equiv \left(\frac{2n}{12},\frac{7n}{12},\frac{n}{12}\right)\quad\left[\frac{2n}{12}\right]$.
\begin{proof}
	Let us show that (\ref{n12a}) holds, the other relationships being proved similarly. We set $(p_1,q_1,r_1)=\left(\frac{n}{12},\frac{5n}{12},\frac{2n}{12}\right)$, $(p_2,q_2,r_2)=\left(\frac{7n}{12},\frac{2n}{12},\frac{n}{12}\right)$ and then, we may easily show that $J_{n}(p_1,q_1,r_1)=J_{n}(p_2,q_2,r_2)=4+\sqrt{3}$. Next, we compute 
		\begin{itemize}
		\item[.] $\sin((p_1+q_1+2r_1)\frac{\pi}{n})=\sin(\frac{10\pi}{12})=\frac12$,
		\item[.] $\sin((p_2+q_2+2r_2)\frac{\pi}{n})=\sin(\frac{11\pi}{12})=\frac{\sqrt{2-\sqrt{3}}}{2}$,
		\item[.] $\cos((p_1-p_2)\frac{\pi}{n})=\cos(-\frac{6\pi}{12})=0$,
		\item[.] $-\cos((p_1-(2p_2+q_2+2r_2))\frac{\pi}{n})=-\cos(-\frac{17\pi}{12})=\cos(\frac{5\pi}{12})=\frac{\sqrt{2-\sqrt{3}}}{2}$,
		\item[.] $-\cos((2p_1+q_1+2r_1)-p_2)\frac{\pi}{n})=-\cos(\frac{4\pi}{12})=-\frac12$,
		\item[.] $\cos((2(p_1-p_2)+(q_1-q_2)+2(r_1-r_2))\frac{\pi}{n})=\cos(-\frac{7\pi}{12})=-\cos(\frac{5\pi}{12})=-\frac{\sqrt{2-\sqrt{3}}}{2}$.
	\end{itemize}
Then it holds
\[d_{P_1,P_2}=J^\frac12\cos^{-1}\left(\frac{1}{(4+\sqrt{3})\times\frac12\times\frac{\sqrt{2-\sqrt{3}}}{2}}\times\begin{pmatrix}-\frac{\sqrt{2-\sqrt{3}}}{2} & \frac{\sqrt{3}}{2}\end{pmatrix}
\begin{pmatrix}-\frac{\sqrt{2-\sqrt{3}}}{2} & \frac{\sqrt{2-\sqrt{3}}}{2}\\-\frac12 & 0\end{pmatrix}\begin{pmatrix}\frac{\sqrt{2+\sqrt{3}}}{2}\\\frac{\sqrt{2-\sqrt{3}}}{2}\end{pmatrix}\right)\]
\[=J^\frac12\cos^{-1}\left(\frac{1}{(4+\sqrt{3})\times\frac12\times\frac{\sqrt{2-\sqrt{3}}}{2}}\times\left(-\frac{\sqrt{2-\sqrt{3}}}{2}\left(-\frac14+\frac{2-\sqrt{3}}{4}\right)-\frac{\sqrt{3}}{2}\times\frac12\frac{\sqrt{2+\sqrt{3}}}{2}\right)\right)\]
\[=J^\frac12\cos^{-1}\left(\frac{1}{4(4+\sqrt{3})\sqrt{2-\sqrt{3}}}\left(-3\sqrt{6}+\sqrt{2}\right)\right)=J^\frac12\cos^{-1}\left(-\frac12\right)=J^\frac12\times4k\left(\frac{2\pi}{n}\right)\]
and equivalence (\ref{n12a}) is proved since $\rho=4k=\frac{4n}{12}$.
\end{proof}
These equivalences highlight several interesting points. First, help to the theory developed previously and as the following section will show, we have an easy access to the different quadrangle configurations giving rise to some particular situations such as an orbit containg only $n$ points or an orbit on which points have multiplicities greater than 2. Second, our method is able obviously to detect the orbits where the multiplicities are greater or equal to 3 and which are not due to Proposition \ref{prop4.5}. To our knowledge, the equations occuring in Proposition \ref{prop4.6} do not depend on more general relationships and explain why situations inducing multiplicities greater than 3 occur while being sporadic.\\
So if we look carefully the equations in Proposition \ref{prop4.6} when $\mathcal{Q}$ is simple, we note that (\ref{n12a}) and (\ref{n12b}) yield intersection points of multiplicity 3 implying $\mathcal{D}_{i,i+\frac{n}{12}}$, $\mathcal{D}_{i+\frac{3n}{12},i+\frac{8n}{12}}$ and $\mathcal{D}_{i+\frac{4n}{12},i+\frac{6n}{12}}$, $i\in\{0,\ldots,n-1\}$, (\ref{n12c}) and (\ref{n12d}) yield intersection points of multiplicity 3 implying $\mathcal{D}_{i,i+\frac{n}{12}}$, $\mathcal{D}_{i+\frac{3n}{12},i+\frac{5n}{12}}$ and $\mathcal{D}_{i+\frac{2n}{12},i+\frac{9n}{12}}$, $i\in\{0,\ldots,n-1\}$, while (\ref{n12e}), (\ref{n12f}) and (\ref{n12g}) yield intersection points of multiplicity 4 implying $\mathcal{D}_{i,i+\frac{n}{12}}$, $\mathcal{D}_{i+\frac{4n}{12},i+\frac{5n}{12}}$, $\mathcal{D}_{i+\frac{3n}{12},\frac{7n}{12}}$ and $\mathcal{D}_{i+\frac{2n}{12},i+\frac{10n}{12}}$, $i\in\{0,\ldots,n-1\}$. When $\mathcal{Q}$ is complex, we note that (\ref{n12h}) and (\ref{n12i}) yield intersection points of multiplicity 3 implying $\mathcal{D}_{i,i+\frac{4n}{12}}$, $\mathcal{D}_{i+\frac{3n}{12},i+\frac{7n}{12}}$ and $\mathcal{D}_{i+\frac{2n}{12},i+\frac{5n}{12}}$, $i\in\{0,\ldots,n-1\}$, while (\ref{n12j}), (\ref{n12k}), (\ref{n12l}), (\ref{n12m}) and (\ref{n12n}) yield intersection points of multiplicity 4 implying $\mathcal{D}_{i,i+\frac{7n}{12}}$, $\mathcal{D}_{i+\frac{4n}{12},i+\frac{11n}{12}}$, $\mathcal{D}_{i+\frac{n}{12},\frac{9n}{12}}$ and $\mathcal{D}_{i+\frac{2n}{12},i+\frac{10n}{12}}$, $i\in\{0,\ldots,n-1\}$.
These results indicate that external points may also have multiplicities greater than 2 and that for $n=12$ the maximal multiplicities of internal and external points are equal.
%

\section{\textit{Simuorb}, an algorithm based on a triple loop for generating all the orbits and all the intersection points}

This section is devoted to presenting the algorithm \textit{Simuorb} used to make the orbits and the intersection points generated by a clique-arrangement available as fast and as reliable as possible. Consecutively to the introduction of the three parameters $p,q,r$ in a previous section, it seems natural help to a triple loop to generate all the possible triplets $(p,q,r)$ associated to any cyclotomic clique-arrangement first to determine all the distinct orbits and next to provide their cardinalities as well as the multiplicities of the points lying on these orbits.

The intent of the following result is to provide a combinatorial description of the quadrangles $\mathcal{Q}$ with respect to triplets $(p,q,r)$, $s$ being known because of its definition (\ref{defpqrs}). In this way we substitute to the admissible quadruplets $(i,j,k,\ell)$ the triplets $(p,q,r)$ lying in the conbinatorial set defined as follows. 
\begin{theorem}
	Let us consider the admissible quadruplet $(i,j,k,\ell)$ and $\mathcal{Q}=(z_{i},z_{j},z_{k},z_{\ell})$ its associated quadrilateral.
	\begin{itemize}
		\item If $\mathcal{Q}$ is simple, we have $1\leq p\leq n-3$, $1\leq r\leq n-p-2$, $1\leq q\leq n-p-r-1$.
		\item If $\mathcal{Q}$ is complex (Case 2),
		\begin{itemize}
			\item[.] if $\sharp\left(\curvearc{z_iz_j}\cap\{z_k,z_\ell\}\right)=0$, we have $1\leq p\leq n-3$,
			\small{\[r\in\left\{\begin{array}{rcl}
					\llbracket 2,n-p-1\rrbracket & \mbox{ if } & p\geq\left[\frac{n-1}{2}\right]\vspace{0.1cm}\\
					\llbracket-\left[\frac{n-1}{2}\right],-p-1\rrbracket\cup\llbracket 2,\left[\frac{n}{2}\right]\rrbracket & \mbox{ if } & p<\left[\frac{n-1}{2}\right]
				\end{array}\right.,~q\in\left\{\begin{array}{rcl}\llbracket n-r+1,n-1\rrbracket & \mbox{ if } & r>0\\
					\llbracket -r+1,n-1\rrbracket & \mbox{ if } & r<0 \end{array}\right.,\]}
			\item[.] if $\sharp\left(\curvearc{z_iz_j}\cap\{z_k,z_\ell\}\right)=2$, we have $3\leq p\leq n-1$,
			\small{\[r\in\left\{\begin{array}{rcl}
					\llbracket -p+1,-2\rrbracket & \mbox{ if } & p\leq\left[\frac{n}{2}\right]+1\vspace{0.1cm}\\
					\llbracket-\left[\frac{n}{2}\right],-2\rrbracket\cup\llbracket n-p+1,\left[\frac{n-1}{2}\right]\rrbracket & \mbox{ if } & p>\left[\frac{n}{2}\right]+1
				\end{array}\right.,~q\in\left\{\begin{array}{rcl}\llbracket 1,-r-1\rrbracket & \mbox{ if } & r<0\\
					\llbracket 1,n-r-1\rrbracket & \mbox{ if } & r>0 \end{array}\right.,\]}			
		\end{itemize}
		\item If $\mathcal{Q}$ is complex (Case 3), we have $2\leq p\leq n-2$,
		\begin{itemize}
			\item[.] if $z_k\in\curvearc{z_iz_j}$, 
			\small{\[r\in\left\{\begin{array}{rcl}
					\llbracket -p+1,-1\rrbracket & \mbox{ if } & p\leq\left[\frac{n}{2}\right]+1\vspace{0.1cm}\\
					\llbracket-\left[\frac{n}{2}\right],-1\rrbracket\cup\llbracket n-p+1,\left[\frac{n-1}{2}\right]\rrbracket & \mbox{ if } & p>\left[\frac{n}{2}\right]+1
				\end{array}\right.,~q\in\left\{\begin{array}{rcl}\llbracket -r+1,n-r-p-1\rrbracket & \mbox{ if } & r<0\\
					\llbracket n-r+1,2n-r-p-1\rrbracket & \mbox{ if } & r>0 \end{array}\right.,\]}
			\item[.] if $z_k\in\curvearc{z_jz_i}$,
			\small{\[r\in\left\{\begin{array}{rcl}
					\llbracket 1,\left[\frac{n}{2}\right]\rrbracket\cup\llbracket -p-1,-\left[\frac{n-1}{2}\right]\rrbracket & \mbox{ if } & p<\left[\frac{n-1}{2}\right]\vspace{0.1cm}\\
					\llbracket 1,n-p-1\rrbracket & \mbox{ if } & p\geq\left[\frac{n-1}{2}\right]
				\end{array}\right.,~q\in\left\{\begin{array}{rcl}\llbracket -r-p+1,-r-1\rrbracket & \mbox{ if } & r<0\\
					\llbracket n-p-r+1,n-r-1\rrbracket & \mbox{ if } & r>0 \end{array}\right.,
				\]}		
		\end{itemize}
	\end{itemize}
	\label{thm5.1}
\end{theorem}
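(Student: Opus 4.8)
The plan is to normalise $i=0$, which is harmless since rotating an admissible quadruplet by $\tfrac{2\pi}{n}$ preserves admissibility, the configuration type, and the triplet $(p,q,r)$; by Lemma~\ref{lemma2.4} the quadruplet is then $(i,j,k,\ell)=(0,\,p,\,p+r,\,p+q+r)$ with all indices read modulo $n$. Placing $z_0,\dots,z_{n-1}$ counterclockwise, the directed arc $\curvearc{z_az_b}$ meets the vertex set exactly in $\{a,a+1,\dots,b\}$ modulo $n$, so $z_c\in\curvearc{z_az_b}$ iff $\delta(a,c)\le\delta(a,b)$. The argument then proceeds configuration by configuration as in Figure~\ref{figquad}: in each case I would use Remark~\ref{remcase} to read off how many of $z_k,z_\ell$ sit on $\curvearc{z_iz_j}$ and conversely, translate this into membership conditions for $k$ and $\ell$ relative to the two arcs cut by $z_0$ and $z_p$, and finally rewrite $k\equiv p+r$ and $\ell\equiv p+q+r\ (\mathrm{mod}\ n)$ to extract, in order, the intervals for $p$, for $r$, and for $q$.

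The simple case (Case~1) is immediate and needs no geometry: since $(0,p,p+r,p+q+r)$ is admissible with $\mathcal{Q}$ simple, Lemma~\ref{lemma2.2} gives $p+r+q+s=n$ and Lemma~\ref{lemma2.5} gives $r,s\ge 1$; together with $p,q\ge 1$ this yields $p\le n-3$, then $r=n-p-q-s\le n-p-2$ with $r\ge 1$, then $q\le n-p-r-1$ with $q\ge 1$, which is the asserted range.

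For complex Case~2 with $\sharp(\curvearc{z_iz_j}\cap\{z_k,z_\ell\})=0$, that condition forces $k,\ell\in\llbracket p+1,n-1\rrbracket$ (the endpoints $0,p$ are excluded since $\sharp\{i,j,k,\ell\}=4$), and then $\mathcal{Q}$ being complex of Case~2 forces $\sharp(\curvearc{z_kz_\ell}\cap\{z_i,z_j\})=2$, i.e. (unwinding the wrap-around of $\curvearc{z_kz_\ell}$) the cyclic order $k>\ell$, hence $k\ge p+2$. Writing $k\equiv p+r$ and using Lemma~\ref{lemma2.5} to fix the sign of $r$ — namely $r=\delta(p,k)=k-p>0$ exactly when $k-p\le[\tfrac{n-1}{2}]$, which, comparing $k-p$ against its largest value $n-1-p$, splits according to whether $p\ge[\tfrac{n-1}{2}]$ — produces the two displayed alternatives for $r$, the lower bound $|r|\ge 2$ coming precisely from $k\ge p+2$; the interval for $q$ then follows from $\ell\equiv p+q+r$ together with $p+1\le\ell<k$. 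The second branch of Case~2, $\sharp(\curvearc{z_iz_j}\cap\{z_k,z_\ell\})=2$, is the mirror situation: now $k,\ell\in\llbracket 1,p-1\rrbracket$ with $k<\ell$, forcing $p\ge 3$ and $k\le p-2$; and Case~3 ($\sharp=\sharp=1$) splits according to whether $z_k\in\curvearc{z_iz_j}$, giving $k\in\llbracket 1,p-1\rrbracket$, $\ell\in\llbracket p+1,n-1\rrbracket$ (the remaining incidence count $\sharp(\curvearc{z_kz_\ell}\cap\{z_i,z_j\})=1$ being then automatic), or $z_k\in\curvearc{z_jz_i}$, the symmetric case $k\in\llbracket p+1,n-1\rrbracket$, $\ell\in\llbracket 1,p-1\rrbracket$; in both, one gets $2\le p\le n-2$ and the stated intervals for $r$ (from $k\equiv p+r$) and $q$ (from $\ell\equiv p+q+r$).

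The only genuine difficulty I anticipate is bookkeeping: one must keep the reductions $k\equiv p+r$, $\ell\equiv p+q+r\ (\mathrm{mod}\ n)$ consistent with the sign convention for $r=\Delta(j,k)$ imposed by Lemma~\ref{lemma2.5}, which is exactly where the thresholds $[\tfrac{n-1}{2}]$ and $[\tfrac n2]$ appear and where, in each configuration, one decides whether a true wrap-around occurs (the borderline even-$n$ value $\delta(j,k)=n/2$, for which Lemma~\ref{lemma2.5} permits $r=\pm n/2$, absorbs the apparent off-by-one at $p=[\tfrac{n-1}{2}]$); likewise the lower bounds $|r|\ge 2$ and $p\ge 3$ in the Case~2 branches must be traced back to $\sharp\{i,j,k,\ell\}=4$ combined with the prescribed incidence pattern, since $r=\pm 1$ (resp. $p=2$) would place $z_k$ adjacent to $z_j$ inside the very arc it is required to avoid. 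Finally, each implication above is reversible — given any triplet in one of the listed sets, $(0,p,p+r,p+q+r)$ is an admissible quadruplet of the corresponding type — so the same analysis yields the converse inclusions as well, which is what makes the triple loop of \textit{Simuorb} exhaustive.
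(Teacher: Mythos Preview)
Your proposal is correct and follows essentially the same route as the paper's proof: both handle the simple case by the pigeonhole $p+q+r+s=n$ with each term $\ge 1$, and both treat the complex cases by locating $z_k,z_\ell$ on the two arcs cut by $z_i,z_j$, then invoking Lemma~\ref{lemma2.5} to decide the sign of $r$ and deriving the range of $q$ from the admissible positions of $z_\ell$. Your normalisation $i=0$ and explicit use of $k\equiv p+r$, $\ell\equiv p+q+r$ make the bookkeeping slightly more transparent than the paper's version; one small slip is the closing remark that ``$r=\pm 1$ would place $z_k$ inside the very arc it is required to avoid'' --- the actual obstruction (which you already derived correctly earlier via $k\ge p+2$) is that there would be no room left for $z_\ell$, not that $z_k$ itself lands in the forbidden arc.
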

\begin{proof} 
	When the quadrilateral is simple, we have $p+q+r+s=\delta(i,j)+\delta(j,k)+\delta(k,\ell)+\delta(\ell,i)$. Because each summand is greater than or equal to 1, we get easily $1\leq p\leq n-3$, $1\leq r\leq n-p-2$ and $1\leq q\leq n-p-r-1$.\\
	Next, let us consider when $\mathcal{Q}=(z_i,z_j,z_k,z_\ell)$ is complex. In Case 2, we have to consider if $\sharp\left(\curvearc{z_iz_j}\cap\{z_k,z_\ell\}\right)=0$ or not. In the first case, $1\leq p\leq n-3$ and $r$ is positive if $p\geq\left[\frac{n-1}{2}\right]$ while $r$ may be positive or negative if $p<\left[\frac{n-1}{2}\right]$. Indeed, if $p\geq\left[\frac{n-1}{2}\right]$ then $n-p-1\leq\left[\frac{n}{2}\right]$ and  $2\leq\delta(j,k)\leq n-p-1$, which means that we get in any case $r=\delta(j,k)>0$ by using the two first results of Lemma \ref{lemma2.5}. The values $\left[\frac{n}{2}\right]$ and $\left[\frac{n-1}{2}\right]$ play an essential role in the proof since they allow to address the issue of the parity of $n$ and they avoid somes situations when $r$ may write differently, i.e. when $n$ is even and $r=\frac{n}{2}=-\frac{n}{2}\mbox{ mod }n$. If $p<\left[\frac{n-1}{2}\right]$, $2\leq \delta(j,k)$ may reach the value $\left[\frac{n}{2}\right]$ as well as more distant values by performing necessarily a clock-wise rotation. Because of the definition of  $r=\Delta(j,k)$, the allowed positive values for $-r$ are in this case such that $-r\in\{p+1,\ldots,\left[\frac{n-1}{2}\right]\}$. The possible values for $q$ are those which allow the vertex $z_\ell$ to lie between $z_j$ and $z_k$. The proof when $\sharp\left(\curvearc{z_iz_j}\cap\{z_k,z_\ell\}\right)=2$ is quite similar and just as subtle. In Case 3, a first necessary condition for the straight lines $\mathcal{D}_{i,j}$ and $\mathcal{D}_{k,\ell}$ to intersect inside the unit circle is that $2\leq p\leq n-2$. Indeed, because $z_k$ or $z_l$ lies in $\curvearc{z_iz_j}$ or $\curvearc{z_jz_i}$ (or because $z_i$ or $z_j$ lie in $\curvearc{z_kz_\ell}$ or $\curvearc{z_\ell z_k}$), neither $z_k$ nor $z_\ell$ (or neither $z_i$ nor $z_j$) are direct neighbours, enforcing $p$ (and $q$) to be greater than or equal to 2 and less or equal to $n-2$. By assuming next that $z_k\in\curvearc{z_iz_j}$, the value of $r$ depends necessarily on the value of $p$. Indeed, if $p\leq\left[\frac{n}{2}\right]+1$ then $\delta(j,k)>\left[\frac{n-1}{2}\right]$ and consequently, $r=-\delta(j,k)$ and $-p+1\leq r\leq -1$. On the other hand, if $p>\left[\frac{n}{2}\right]+1$, $r$ may be negative but also positive since $\delta(j,k)$ may be lower than or equal to  $\left[\frac{n-1}{2}\right]$. Then we have $r\in\{-\left[\frac{n}{2}\right],\ldots,-1\}\cup\{n-p+1,\ldots,\left[\frac{n-1}{2}\right]\}$. Once $p$ and $r$ have been chosen, depending on the sign of $r$, we have to reach the vertex $z_\ell$ counter clock-wise, by first passing through $z_j$ which explains the possible values for $q$. The case $z_k\in\curvearc{z_jz_i}$ may be treated in the same manner.		 
\end{proof}

 In the case of external orbits, we shall use the following routine
\begin{algorithm}[!ht]
	\caption{Triple loop for generating triplets $(p,q,r)$ associated to external orbits (Case 1)}
	\begin{algorithmic}[1]
		\Inputs{$n\geq 5$}
		\For{$p = 1$ to $n-3$}
		\For{$r = 1$ to $n-p-2$}
		\For{$q = 1$ to $n-p-q-1$}
		\State{$pq2r=p+q+2*r$}
		\If{$mod(pq2r,n)\sim=0$}
		\State $orb_{-}ext=[orb_{-}ext ; [p,q,r,\sqrt{Jn(p,q,r)}]]$
		\EndIf
		\EndFor
		\EndFor
		\EndFor	
		\State Remove identical quadruplets in $orb_{-}ext$
		\State Sort quadruplets in $orb_{-}ext$ w.r.t. decreasing values of $\sqrt{J_n}$
	\end{algorithmic}
	\label{algo1}
\end{algorithm}

We represent in Figures \ref{figJnext1} and \ref{figJnext2} respectively the values of $\sqrt{J_{20}(p,q,r)}$ with respect to the unfolding of the triple loop given in Algorithm \ref{algo1} and the values of $|r-s|$ (which indicate the level of parallelism between $\mathcal{D}_{i,j}$ and $\mathcal{D}_{k,\ell}$) with respect to the values of $\sqrt{J_{20}(p,q,r)}$.
\begin{figure}[!ht]
	\begin{minipage}[c]{.46\linewidth}
		\centering\includegraphics[width=5.5cm,height=4cm]{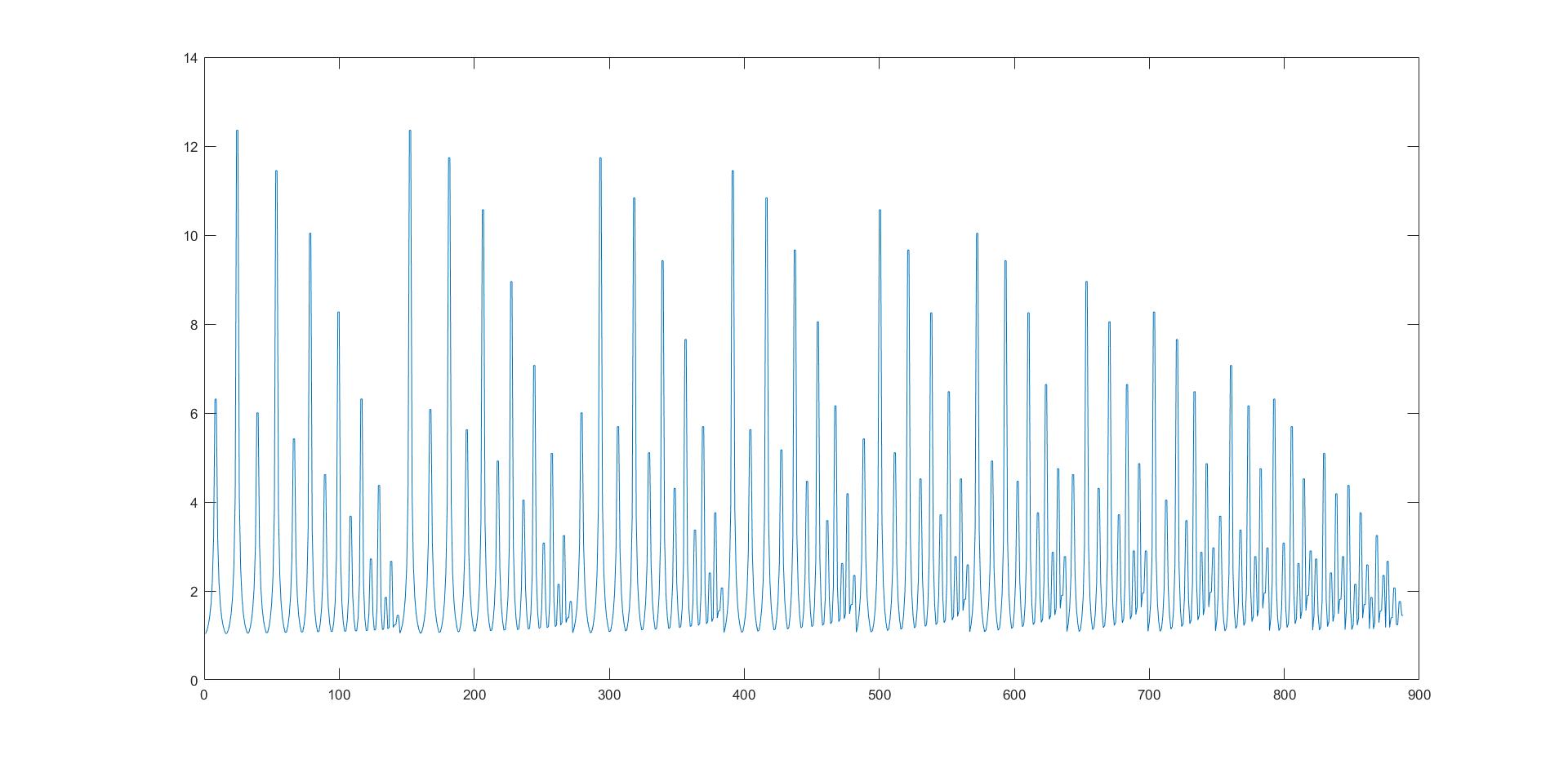}
		\caption{\centering Values of $\sqrt{J_{20}(p,q,r)}$ w.r.t. the unfolding of the triple loop (Algorithm \ref{algo1})}
		\label{figJnext1}
	\end{minipage} \hfill
	\begin{minipage}[c]{.46\linewidth}
		\centering\includegraphics[width=5.5cm,height=4cm]{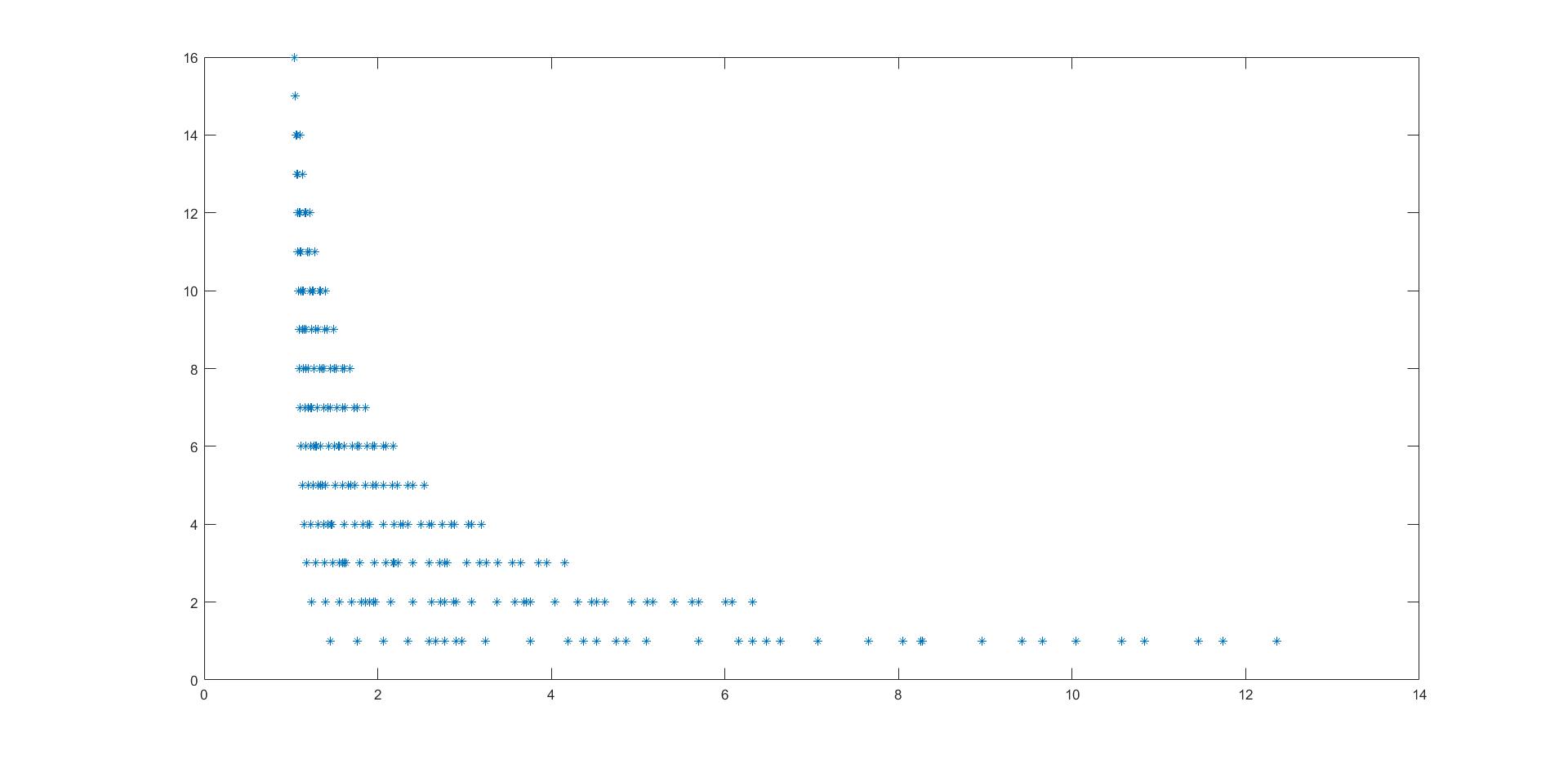}
		\caption{\centering Values of $|r-s|$ w.r.t. $\sqrt{J_{20}(p,q,r)}$}		
		\label{figJnext2}		
	\end{minipage}
\end{figure}

%
%

The greatest values of $\sqrt{J_n}$ are obtained when $\mathcal{D}_{i,j}$ and $\mathcal{D}_{k,\ell}$ are almost parallel and the most distant from each other, i.e. when $r$ ans $s$ are distinct but almost equal and as large as possible. For example, when $n=20$, $\max(\sqrt{J_n(p,q,r)})\simeq 12,3551$ is obtained through the triplets $(1,2,8)\equiv (2,1,9)\equiv (1,2,9)\equiv (2,1,8)$ where $|r-s|=1$. Next, the values of $\sqrt{J_n}$ close but greater to $1$ are obtained when $\mathcal{D}_{i,j}$ and $\mathcal{D}_{k,\ell}$ are almost identical which occurs in many situations. When $n=20$, $\min_{\sqrt{J_n}>1}(\sqrt{J_n(p,q,r)})\simeq 1,0385$ is obtained through the triplets $(1,1,1)\equiv (1,1,17)$ where $|r-s|=14$. Nevertheless, as Figure \ref{figJnext2} shows, the monotonicity of $|r-s|$ is not a sufficient condition to ensure the monotonicity of $\sqrt{J_n}$.\\

Help to Theorem \ref{thm5.1} we may use also Algorithms \ref{algo1a} and \ref{algo1b} which are mathematically equivalent to Algorithm \ref{algo1}.\\

\begin{algorithm}[!ht]
	\caption{Triple loop for generating triplets $(p,q,r)$ associated to external orbits  (Case 2 and $\sharp\left(\curvearc{z_iz_j}\cap\{z_k,z_\ell\}\right)=0$)}
		\begin{algorithmic}[1]
		\Inputs{$n\geq 5$}
		\State $n1=\left[\frac{n}{2}\right]$, $n2=\left[\frac{n-1}{2}\right]$
		\For{$p = 1$ to $n-3$}
			\If{$p\geq n2$}
				\State $rtmp=2:n-p-1$
			\Else
				\State $rtmp=[-n2:-p-1,2:n1]$
			\EndIf
			\For{$i=1:length(rtmp)$}
				\State $r=rtmp(i)$
				\If{$r<0$}
					\For{$q=-r+1:n-1$}
						\State $pq2r=p+q+2*r$
						\If{$mod(pq2r,n)\sim=0$}
							\State $orb_{-}ext=[orb_{-}ext ; [p,q,r,\sqrt{Jn(p,q,r)}]]$
						\EndIf
					\EndFor
				\Else
					\For{$q=n-r+1:n-1$}
						\State $pq2r=p+q+2*r$
						\If{$mod(pq2r,n)\sim=0$}
							\State $orb_{-}ext=[orb_{-}ext ; [p,q,r,\sqrt{Jn(p,q,r)}]]$
						\EndIf
					\EndFor
				\EndIf
			\EndFor
		\EndFor	
		\State Remove identical quadruplets in $orb_{-}ext$
		\State Sort quadruplets in $orb_{-}ext$ w.r.t. decreasing values of $\sqrt{J_n}$
		\end{algorithmic}
	\label{algo1a}
\end{algorithm}
\begin{algorithm}[!ht]
	\caption{Triple loop for generating triplets $(p,q,r)$ associated to external orbits  (Case 2 and $\sharp\left(\curvearc{z_iz_j}\cap\{z_k,z_\ell\}\right)=2$)}
	\begin{algorithmic}[1]
		\Inputs{$n\geq 5$}
		\State $n1=\left[\frac{n}{2}\right]$, $n2=\left[\frac{n-1}{2}\right]$
		\For{$p = 3$ to $n-1$}
			\If{$p\geq n1+1$}
				\State $rtmp=-p+1:-2$
			\Else
				\State $rtmp=[-n1:-2,n-p+1:n2]$
			\EndIf
			\For{$i=1:length(rtmp)$}
				\State $r=rtmp(i)$
				\If{$r<0$}
					\For{$q=1:-r-1$}
						\State $pq2r=p+q+2*r$
						\If{$mod(pq2r,n)\sim=0$}
							\State $orb_{-}ext=[orb_{-}ext ; [p,q,r,\sqrt{Jn(p,q,r)}]]$
						\EndIf
					\EndFor
				\Else
					\For{$q=1:n-r-1$}
						\State $pq2r=p+q+2*r$
						\If{$mod(pq2r,n)\sim=0$}
							\State $orb_{-}ext=[orb_{-}ext ; [p,q,r,\sqrt{Jn(p,q,r)}]]$
						\EndIf
					\EndFor
				\EndIf
			\EndFor
		\EndFor	
		\State Remove identical quadruplets in $orb_{-}ext$
		\State Sort quadruplets in $orb_{-}ext$ w.r.t. decreasing values of $\sqrt{J_n}$
	\end{algorithmic}
	\label{algo1b}
\end{algorithm}
		
In the case of internal orbits (Case 3), we shall use the Algorithms \ref{algo2a} or \ref{algo2b}.\\

\begin{algorithm}[!ht]
	\caption{Triple loop for generating triplets $(p,q,r)$ associated to internal orbits (Case 3 and $z_k\in\curvearc{z_iz_j}$)}
	\begin{algorithmic}[1]
		\Inputs{$n\geq 5$}
		\State $n1=\left[\frac{n}{2}\right]$, $n2=\left[\frac{n-1}{2}\right]$
		\For{$p = 2$ to $n-2$}
			\If{$p\geq n1+1$}
				\State $rtmp=-p+1:-1$
			\Else
				\State $rtmp=[-n1:-1,n-p+1:n2]$
			\EndIf
			\For{$i=1:length(rtmp)$}
				\State $r=rtmp(i)$
				\If{$r<0$}
					\For{$q=-r+1:n-r-p-1$}
						\If{$p\sim=n/2~||~q\sim=n/2$}
							\State $orb_{-}int=[orb_{-}int ; [p,q,r,\sqrt{Jn(p,q,r)}]]$
						\EndIf
					\EndFor
				\Else
					\For{$q=n-r+1:2*n-r-p-1$}
						\If{$p\sim=n/2~||~q\sim =n/2$}
							\State $orb_{-}int=[orb_{-}int ; [p,q,r,\sqrt{Jn(p,q,r)}]]$
						\EndIf
					\EndFor
				\EndIf
			\EndFor
		\EndFor	
		\State Remove identical quadruplets in $orb_{-}int$
		\State Sort quadruplets in $orb_{-}int$ w.r.t. decreasing values of $\sqrt{J_n}$
	\end{algorithmic}
	\label{algo2a}
\end{algorithm}
\begin{algorithm}[!ht]
		\caption{Triple loop for generating triplets $(p,q,r)$ associated to internal orbits (Case 3 and $z_k\in\curvearc{z_jz_i}$)}
\begin{algorithmic}[1]
	\Inputs{$n\geq 5$}
	\State $n1=\left[\frac{n}{2}\right]$, $n2=\left[\frac{n-1}{2}\right]$
	\For{$p = 2$ to $n-2$}
		\If{$p\geq n2$}
			\State $rtmp=1:n-p-1$
		\Else
			\State $rtmp=[1:n1,-p-1:n2]$
		\EndIf
		\For{$i=1:length(rtmp)$}
			\State $r=rtmp(i)$
			\If{$r<0$}
				\For{$q=-r-p+1:-r-1$}
					\If{$p\sim=n/2~||~q\sim=n/2$}
						\State $orb_{-}int=[orb_{-}int ; [p,q,r,\sqrt{Jn(p,q,r)}]]$
					\EndIf
				\EndFor
			\Else
				\For{$q=n-p-r+1:n-r-1$}
					\If{$p\sim=n/2~||~q\sim=n/2$}
						\State $orb_{-}int=[orb_{-}int ; [p,q,r,\sqrt{Jn(p,q,r)}]]$
					\EndIf
				\EndFor
			\EndIf
		\EndFor
	\EndFor	
	\State Remove identical quadruplets in $orb_{-}int$
	\State Sort quadruplets in $orb_{-}int$ w.r.t. decreasing values of $\sqrt{J_n}$
\end{algorithmic}
\label{algo2b}
\end{algorithm}

As in the external case, we represent in Figures \ref{figJnint1} and \ref{figJnint2} respectively the values of $\sqrt{J_{20}(p,q,r)}$ with respect to the unfolding of the triple loop given in Algorithms \ref{algo2a} and \ref{algo2b} and the values of $p+q$ (which indicate whether $\mathcal{D}_{i,j}$ and $\mathcal{D}_{k,\ell}$ look like diameters of the unit circle) with respect to the values of $\sqrt{J_{20}(p,q,r)}$.\\
\begin{figure}[!ht]
	\begin{minipage}[c]{.46\linewidth}
		\centering\includegraphics[width=5.5cm,height=4cm]{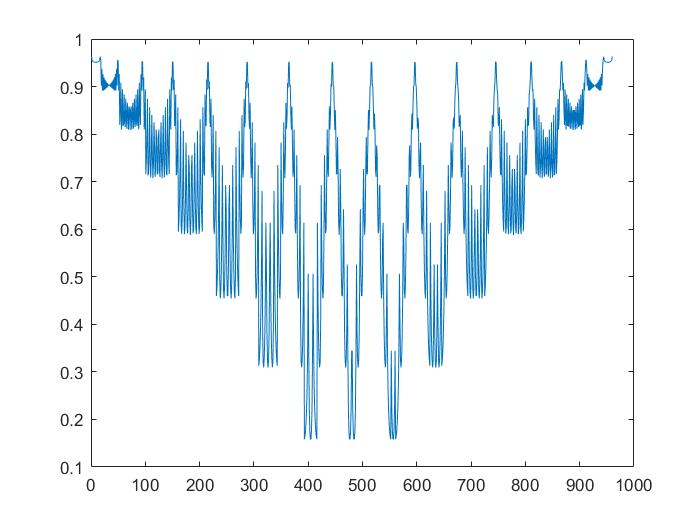}
		\caption{\centering Values of $\sqrt{J_{20}(p,q,r)}$ w.r.t. the unfolding of the triple loop (Algorithm \ref{algo2a} or \ref{algo2b})}
		\label{figJnint1}
	\end{minipage} \hfill
	\begin{minipage}[c]{.46\linewidth}
		\centering\includegraphics[width=5.5cm,height=4cm]{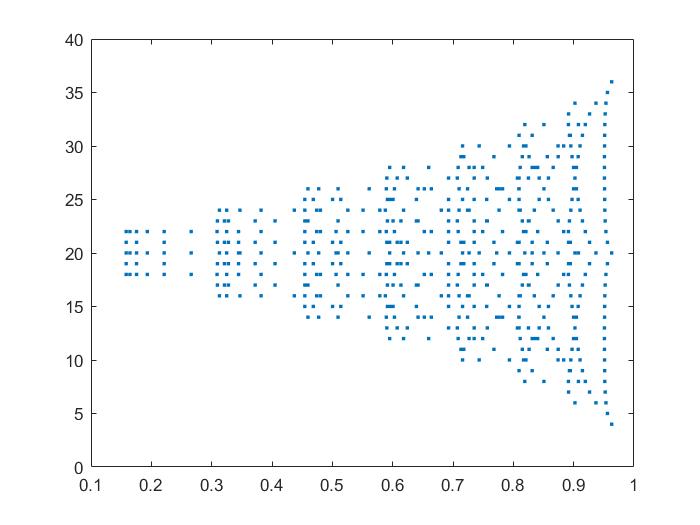}
		\caption{\centering Values of $p+q$ w.r.t. $\sqrt{J_{20}(p,q,r)}$}		
		\label{figJnint2}		
	\end{minipage}
\end{figure}
%
%
%

The values of $\sqrt{J_n}$ close but lower to $1$ are obtained when $\mathcal{D}_{i,j}$ and $\mathcal{D}_{k,\ell}$ are almost identical which occurs in many different situations. For example, when $n=20$, $\max_{\sqrt{J_n}<1}(\sqrt{J_n(p,q,r)})\simeq 0,9269$ is obtained through the triplets $(2,2,-1)\equiv (18,18,-17)\equiv (2,18,-1)\equiv (18,2,-1)$. Next, the smallest values of $\sqrt{J_n}$ are obtained when $\mathcal{D}_{i,j}$ and $\mathcal{D}_{k,\ell}$ are (almost) diameters of the unit circle, i.e. when $p$ and $q$ are close to $\frac{n}{2}$. For example, when $n=20$, $\min(\sqrt{J_n})\simeq 0,1584$ is reached through the triplets $(9,9,-8)\equiv(9,10,-4)\equiv (10,9,-4)\equiv (9,10,-5)\equiv(10,9,-5)\equiv (9,11,-1)\equiv (11,9,-1)\equiv (10,11,-6,)\equiv (11,10,-6)\equiv (11,11,-10)\equiv(10,11,-5)\equiv (11,10,-5)$.\\

We may easily prove the different results obtained in our previous examples when $n=20$. Let us assume temporarily in the following proposition that $p,q,r$ are continuous variables lying in the interval $]-n,n[$.
\begin{proposition}
	The mapping $J_n$ is not coercive and its critical points $(p,q,r)$ of $J_n$ are of the shape $(\frac{n}{2}+k_1n,\frac{n}{2}+k_2n,r)$ with $k_1,k_2\in\mathbb{Z}$ and $r$ an arbitrary real number such that $\frac{2r}{n}\notin\mathbb{Z}$.
	\label{prop5.1}
\end{proposition}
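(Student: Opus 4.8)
The plan is to regard $J_n$, via its closed form (\ref{Jnmod}) from Theorem \ref{thmJn}, as a smooth function of the real variables $(p,q,r)$ on the open set $\Omega\subset\,]-n,n[^3$ where $\sin((p+q+2r)\frac{\pi}{n})\neq 0$, to compute $\nabla J_n$ explicitly, and to solve $\nabla J_n=0$. Throughout I write $a=\cos(p\frac{\pi}{n})$, $b=\cos(q\frac{\pi}{n})$, $t=(p+q+2r)\frac{\pi}{n}$, $c=\cos t$, $N=a^2+b^2-2abc$ and $D=\sin^2 t=1-c^2$, so that $J_n=N/D$. Non-coercivity is the easy half: the natural extension of $J_n$ to $\mathbb{R}^3$ minus its polar locus is $2n$-periodic in $p$ and in $q$ and $n$-periodic in $r$, hence stays bounded away from the poles, so for generic $(q_0,r_0)$ the slice $p\mapsto J_n(p,q_0,r_0)$ does not tend to $+\infty$ as $p\to n^{-}$, and $J_n$ is not coercive. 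For the gradient, differentiating $J_n=N/D$ and simplifying with the identities $Nc-abD=-(b-ac)(a-bc)$ (expand and regroup) and
\[
\sin\!\big(p\tfrac{\pi}{n}\big)\sin t-(b-ac)=\cos\!\big((q+2r)\tfrac{\pi}{n}\big)-\cos\!\big(q\tfrac{\pi}{n}\big)=-2\sin\!\big((q+r)\tfrac{\pi}{n}\big)\sin\!\big(r\tfrac{\pi}{n}\big)
\]
(rewrite $t-p\frac{\pi}{n}=(q+2r)\frac{\pi}{n}$, then apply the $\cos-\cos$ product formula) yields, using $D^2=\sin^4 t$,
\[
\partial_p J_n=\frac{4\pi}{n\sin^3 t}\,(a-bc)\,\sin\!\big((q+r)\tfrac{\pi}{n}\big)\sin\!\big(r\tfrac{\pi}{n}\big),\qquad \partial_r J_n=\frac{4\pi}{n\sin^3 t}\,(a-bc)(b-ac),
\]
together with the expression for $\partial_q J_n$ obtained from that for $\partial_p J_n$ by the symmetry $p\leftrightarrow q$, $a\leftrightarrow b$ (the matrix form $J_n=\det(A)^{-1}v^TAv$ of Theorem \ref{thmJn} makes these factorisations transparent).

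Next I would solve $\nabla J_n=0$ on $\Omega$ by a short case analysis. As $\sin t\neq 0$, $\partial_r J_n=0$ forces $a=bc$ or $b=ac$; by the $p\leftrightarrow q$ symmetry assume $a=bc$. Then $b-ac=b(1-c^2)=b\sin^2 t$, so $\partial_q J_n=0$ forces $b\,\sin((p+r)\frac{\pi}{n})\sin(r\frac{\pi}{n})=0$. If $b=0$ then $a=bc=0$, i.e. $p\equiv q\equiv\frac{n}{2}$ modulo $n$; conversely such a point lies in $\Omega$ precisely when $\sin t=\pm\sin(2r\frac{\pi}{n})\neq 0$, that is $\frac{2r}{n}\notin\mathbb{Z}$, and there $a-bc=b-ac=0$, so all three partials vanish — this is exactly the announced family. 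It remains to exclude $b\neq 0$: then $\sin((p+r)\frac{\pi}{n})\sin(r\frac{\pi}{n})=0$, and using the standing non-degeneracy of the quadrangle — $p,q,r,s\notin n\mathbb{Z}$, which expresses $\sharp\{i,j,k,\ell\}=4$ together with the Remarks after Definition \ref{ddefpqrs} — one has $\sin(r\frac{\pi}{n})\neq 0$, hence $p+r\equiv 0$ modulo $n$; substituting into $a=bc$ and simplifying with product-to-sum identities gives $\cos(p\frac{\pi}{n})=\cos((2q-p)\frac{\pi}{n})$, whence $p\equiv q$ or $q\equiv 0$ modulo $n$, each of which, inside $]-n,n[^3$, forces $q=0$ or $\sin t=0$ — a contradiction.

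The main obstacle is this last case distinction. The gradient computation, though a bit lengthy, is mechanical once the two factorisation identities are in hand; but to pin down the critical locus one has to track carefully which solutions of $\nabla J_n=0$ genuinely lie in the interior of $\Omega$ and to invoke the non-degeneracy hypotheses consistently — otherwise spurious one-parameter families of critical points (such as the line $q=r=0$, along which $J_n\equiv 1$) creep in and have to be discarded as degenerate.
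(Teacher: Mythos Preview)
Your proposal is correct and follows essentially the same route as the paper: non-coercivity from periodicity, explicit computation and factorisation of the three partial derivatives of $J_n=N/D$, and then the observation that the two factors $a-bc$ and $b-ac$ must vanish simultaneously, forcing $a=b=0$. Your factorisations are the same as the paper's up to the product-to-sum identity $\cos(q\tfrac{\pi}{n})-\cos((q+2r)\tfrac{\pi}{n})=2\sin((q+r)\tfrac{\pi}{n})\sin(r\tfrac{\pi}{n})$, and your final case analysis (ruling out $b\neq 0$ via $p+r\equiv 0$) is more explicit than the paper's one-line dismissal of those branches as ``impossible according to the conditions over $p,q$ and $r$''.
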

\begin{proof}
	By using Definition \ref{Jnmod}, we note that $J_n$ is positive.  
	Since we have 
	\begin{center}
		$J_n(p,q,r)=J_n(p+2kn,q,r)=J_n(p,q+2kn,r)=J_n(p,q,r+kn)$ for all integer $k\in\mathbb{Z}$,
	\end{center} 
we see that $J_n$ is periodic of group of period $(2n\mathbb{Z})^2 \times n\mathbb{Z}$ and thus, this entails coercivity of $J_n$. We derive next the necessary first order conditions for $J_n$. We have
	\begin{center}
		$\small \begin{array}{rcl}
			\displaystyle \frac{\partial J_n}{\partial p}(p,q,r) & = & \frac{-2\pi}{n\sin^3\left((p+q+2r)\frac{\pi}{n}\right)}\left[\sin\left(p\frac{\pi}{n}\right)\cos\left(p\frac{\pi}{n}\right)\sin\left((p+q+2r)\frac{\pi}{n}\right)-\right.\\
			&&\cos\left(q\frac{\pi}{n}\right)\sin\left(p\frac{\pi}{n}\right)\cos\left((p+q+2r)\frac{\pi}{n}\right)\sin\left((p+q+2r)\frac{\pi}{n}\right)-\cos\left(q\frac{\pi}{n}\right)\cos\left(p\frac{\pi}{n}\right)\sin^2\left((p+q+2r)\frac{\pi}{n}\right)+\\
			&&\left.\cos^2\left(p\frac{\pi}{n}\right)\cos\left((p+q+2r)\frac{\pi}{n}\right)+\cos^2\left(q\frac{\pi}{n}\right)\cos\left((p+q+2r)\frac{\pi}{n}\right)-2\cos\left(p\frac{\pi}{n}\right)\cos\left(q\frac{\pi}{n}\right)\cos^2\left((p+q+2r)\frac{\pi}{n}\right)\right]\\
			& = & \frac{-2\pi}{n\sin^3\left((p+q+2r)\frac{\pi}{n}\right)}\left[\cos\left(p\frac{\pi}{n}\right)\cos\left((q+2r)\frac{\pi}{n}\right)-\cos\left(p\frac{\pi}{n}\right)\cos\left(q\frac{\pi}{n}\right)-\cos\left(p\frac{\pi}{n}\right)\cos\left(q\frac{\pi}{n}\right)\cos^2\left((p+q+2r)\frac{\pi}{n}\right)-\right.\\
			&&\left.\cos\left(q\frac{\pi}{n}\right)\sin\left(p\frac{\pi}{n}\right)\cos\left((p+q+2r)\frac{\pi}{n}\right)\sin\left((p+q+2r)\frac{\pi}{n}\right)+\cos^2\left(q\frac{\pi}{n}\right)\cos\left((p+q+2r)\frac{\pi}{n}\right)\right]\\
			& = &\frac{-2\pi}{n\sin^3\left((p+q+2r)\frac{\pi}{n}\right)}\left[\cos\left(p\frac{\pi}{n}\right)\cos\left((q+2r)\frac{\pi}{n}\right)-\cos\left(p\frac{\pi}{n}\right)\cos\left(q\frac{\pi}{n}\right)-\right.\\
			&&\left.\cos\left(q\frac{\pi}{n}\right)\cos\left((p+q+2r)\frac{\pi}{n}\right)\cos\left((q+2r)\frac{\pi}{n}\right)+\cos^2\left(q\frac{\pi}{n}\right)\cos\left((p+q+2r)\frac{\pi}{n}\right)\right]\\
			& = & \frac{2\pi}{n\sin^3\left((p+q+2r)\frac{\pi}{n}\right)}\left(\cos\left(p\frac{\pi}{n}\right)-\cos\left(q\frac{\pi}{n}\right)\cos\left((p+q+2r)\frac{\pi}{n}\right)\right)\left(\cos\left(q\frac{\pi}{n}\right)-\cos\left((q+2r)\frac{\pi}{n}\right)\right).
		\end{array}$
	\end{center}
	Because $p$ and $q$ play a symmetric role in $J_n$, we have
	\begin{center}
		$\small \frac{\partial J_n}{\partial q}(p,q,r)=\frac{2\pi}{n\sin^3\left((p+q+2r)\frac{\pi}{n}\right)}\left(\cos\left(q\frac{\pi}{n}\right)-\cos\left(p\frac{\pi}{n}\right)\cos\left((p+q+2r)\frac{\pi}{n}\right)\right)\left(\cos\left(p\frac{\pi}{n}\right)-\cos\left((p+2r)\frac{\pi}{n}\right)\right).$
	\end{center}
	Next,
	\begin{center}
		$\small \begin{array}{rcl}
			\displaystyle \frac{\partial J_n}{\partial r}(p,q,r) & = & \frac{4\pi}{n\sin^3\left((p+q+2r)\frac{\pi}{n}\right)}\left[\cos\left(p\frac{\pi}{n}\right)\cos\left(q\frac{\pi}{n}\right)\sin^2\left((p+q+2r)\frac{\pi}{n}\right)-\cos^2\left(p\frac{\pi}{n}\right)\cos\left((p+q+2r)\frac{\pi}{n}\right)-\right.\\
			&&\left.\cos^2\left(q\frac{\pi}{n}\right)\cos\left((p+q+2r)\frac{\pi}{n}\right)+2\cos\left(p\frac{\pi}{n}\right)\cos\left(q\frac{\pi}{n}\right)\cos^2\left((p+q+2r)\frac{\pi}{n}\right)\right]\\
			& = & \frac{4\pi}{n\sin^3\left((p+q+2r)\frac{\pi}{n}\right)}\left[\cos\left(p\frac{\pi}{n}\right)\cos\left(q\frac{\pi}{n}\right)+\cos\left(p\frac{\pi}{n}\right)\cos\left(q\frac{\pi}{n}\right)\cos^2\left((p+q+2r)\frac{\pi}{n}\right)-\right.\\
			&&\left.\cos^2\left(p\frac{\pi}{n}\right)\cos\left((p+q+2r)\frac{\pi}{n}\right)-\cos^2\left(q\frac{\pi}{n}\right)\cos\left((p+q+2r)\frac{\pi}{n}\right)\right]\\
			& = & \frac{4\pi}{n\sin^3\left((p+q+2r)\frac{\pi}{n}\right)}\left(\cos\left(q\frac{\pi}{n}\right)-\cos\left(p\frac{\pi}{n}\right)\cos\left((p+q+2r)\frac{\pi}{n}\right)\right)\left(\cos\left(p\frac{\pi}{n}\right)-\cos\left(q\frac{\pi}{n}\right)\cos\left((p+q+2r)\frac{\pi}{n}\right)\right).
		\end{array}$
	\end{center}
	The three partial derivatives must cancel simultaneously so we have
	\[\left\{\begin{array}{rcl}
		\cos\left(p\frac{\pi}{n}\right) & = & \cos\left(q\frac{\pi}{n}\right)\cos\left((p+q+2r)\frac{\pi}{n}\right)\mbox{ and/or }\cos\left(q\frac{\pi}{n}\right)=\cos\left((q+2r)\frac{\pi}{n}\right)\\
		\cos\left(q\frac{\pi}{n}\right) & = & \cos\left(p\frac{\pi}{n}\right)\cos\left((p+q+2r)\frac{\pi}{n}\right)\mbox{ and/or }\cos\left(p\frac{\pi}{n}\right)=\cos\left((p+2r)\frac{\pi}{n}\right)\\
		\cos\left(q\frac{\pi}{n}\right) & = & \cos\left(p\frac{\pi}{n}\right)\cos\left((p+q+2r)\frac{\pi}{n}\right)\mbox{ and/or }\cos\left(p\frac{\pi}{n}\right)= \cos\left(q\frac{\pi}{n}\right)\cos\left((p+q+2r)\frac{\pi}{n}\right)
	\end{array}\right..\]
	Nevertheless, the conditions $\cos\left(q\frac{\pi}{n}\right)=\cos\left((q+2r)\frac{\pi}{n}\right)$ and $\cos\left(p\frac{\pi}{n}\right)=\cos\left((p+2r)\frac{\pi}{n}\right)$ are impossible according to the conditions over $p,q$ and $r$ so, in order to state the first-order optimality condition, we enforce 
	\[\cos\left(q\frac{\pi}{n}\right)=\cos\left(p\frac{\pi}{n}\right)\cos\left((p+q+2r)\frac{\pi}{n}\right)\mbox{ and }\cos\left(p\frac{\pi}{n}\right)= \cos\left(q\frac{\pi}{n}\right)\cos\left((p+q+2r)\frac{\pi}{n}\right).\]
	Since $J_n(p,q,r)$ is well-defined, we do not have $\cos((p+q+2r)\frac{\pi}{n})=\pm 1$ which implies that the critical points $(p,q,r)$ have the shape indicated.
\end{proof}
Coming back to our context, because $(p,q,r)$ is a lattice vector, $J_n$ is minimal if and only if $p=q=\frac{n}{2}$ if $n$ is even, and $p=\frac{n\pm 1}{2}$ and $q=\frac{n\mp 1}{2}$ if $n$ is odd. Note also that when the straight lines $(z_i,z_j)$ and $(z_k,z_\ell)$ are concurrent at the center of $\mathcal{C}_0$ we obtain a global strict minimum equal to 0 for $J_n$.\\

Once the triplets have been generated, we gather and we sort them with respect to $\sqrt{J_n}$ and we get interested to each sub-family of triplets associated to a particular value of $\sqrt{J_n}$. Help to the circular arc length $d_{P_1,P_2}$ we determine if the triplets are equivalent or not (according to the first point of Proposition \ref{prop4.1}) and we gather and we label them according to this condition. Then we deduce from these sub-collections the number of points on the orbit (according to the second point in Proposition \ref{prop4.1}) and their multiplicities. We develop this procedure in the following pseudocode.
\begin{algorithm}[!ht]
	\caption{Determination of the number of points of the clique-arrangement and their multiplicities}
	\begin{algorithmic}[1]
		\Inputs{$n\geq 5$, $orb_{-}ext$, $orb_{-}int$}
		\State Compute $nborb_{-}ext=\#(orb_{-}ext(4,:))$ and $nborb_{-}int=\#(orb_{-}int(4,:))$
		\For{$i = 1$ to $nborb_{-}ext$}
		\State Gather the quadruplets associated to $\sqrt{J_n}(i)$ in the matrix $miniorb_{-}ext(i)$
		\State Filter the redundant triplets according to equations (\ref{eqpqr0}), (\ref{eqppr1a}), (\ref{eqppr2a}) and (\ref{eqppr3a}).
		\State Compute the circular arc length (and the associated shift $\rho$) between all the vectors in $miniorb_{-}ext(i)$, label them if they are equivalent, and sort them w.r.t $label$ in the submatrices $subminiorb_{-}ext(i,j)$
		\State For each vector of $subminiorb_{-}ext(i,j)$ containing $p,q,r,\sqrt{J_n},\rho,label$, associate the vector $(\rho,\rho+p,\rho+p+r,\rho+p+q+r,\sqrt{J_n})$ and form the matrix $subminiorbind_{-}ext(i,j)$
		\State Compute the numbers $mult_{-}ext(i,j)$ of distinct pairs occuring in the first two couples of components in $subminiorbind_{-}ext(i,j)$
		\State Compute the maximal multiplicity of each orbit $mult_{-}ext(i)=max_j(mult_{-}ext(i,j))$
		\State Compute the number of nonzero (nnz) matrix elements in $mult_{-}ext(i)$ and the number of points on each orbit $nbpts_{-}ext(i)=nnz(mult_{-}ext(i)\times n)$		
		\EndFor	
		\For{$i = 1$ to $nborb_{-}int$}
		\State Gather the quadruplets associated to $\sqrt{J_n}(i)$ in the matrix $miniorb_{-}int(i)$
		\State Filter the redundant triplets according to equations (\ref{eqpqr0}) to (\ref{eqppr3a}).		
		\State Compute the circular arc length (and the associated shift $\rho$) between all the vectors in $miniorb_{-}int(i)$, label them if they are equivalent, and sort them w.r.t $label$ in the submatrices $subminiorb_{-}int(i,j)$
		\State For each vector of $subminiorb_{-}int(i,j)$ containing $p,q,r,\sqrt{J_n},\rho,label$, associate the vector $(\rho,\rho+p,\rho+p+r,\rho+p+q+r,\sqrt{J_n})$ and form the matrix $subminiorbind_{-}int(i,j)$
		\State Compute the numbers $mult_{-}int(i,j)$ of distinct pairs occuring in the first two couples of components in $subminiorbind_{-}int(i,j)$
		\State Compute the maximal multiplicity of each orbit $mult_{-}int(i)=max_j(mult_{-}int(i,j))$
		\State Compute the number of nonzero ($nnz$) matrix elements in $mult_{-}int(i)$ and the number of points on each orbit $nbpts_{-}int(i)=nnz(mult_{-}int(i)\times n)$		
		\EndFor			
		\State Compute the total number of external points $nbpts_{-}ext=\sum_i nbpts_{-}ext(i)$, the total number of internal points $nbpts_{-}int=\sum_i nbpts_{-}int(i)$ and the total number of intersection points $nbpts$.
	\end{algorithmic}		
	\label{algo3}
\end{algorithm}   

\section{Numerical considerations}

This last section is devoted to some numerical experiments to prove the efficiency of the procedure \textit{Simuorb} which consists of the use of Algorithms \ref{algo1} (or \ref{algo1a} or \ref{algo1b}), \ref{algo2a} (or \ref{algo2b}) and \ref{algo3}. All computations are carried out using MATLAB version 25.1.0.2973910 (R2025a), in double precision arithmetic, on a Dell Latitude 5310 laptop, with an Intel Core i5-10210U 1.6GHz processor, running under the Professional Windows 10 operating system.\\

As mentioned previously, we may complete the table 7 of \cite{PR} with the results related to the external orbits. As in \cite{PR}, $a_k(n)$ denotes the number of points inside the regular $n$-gon other than the center where exactly $k$ lines meet. Similarly we define $\tilde{a}_k(n)$ as the number of points outside the regular $n$-gon where exactly $k$ lines meet. We obtain the following datas
\begin{table}[!ht]
	\centering
	\scalebox{0.7}{	
	\begin{tabular}{|c||*{10}{c|}}
		\hline
		$n$ & $a_2/\tilde{a}_{2}$ & $a_3/\tilde{a}_3$ & $a_4/\tilde{a}_4$ & $a_5/\tilde{a}_5$ & $a_6/\tilde{a}_6$ & $a_7/\tilde{a}_7$ & $nbpts_{-}int/nbpts_{-}{ext}$ & $nbpts$ & $nborb_{-}{int}/nborb_{-}{ext}$ & $nborb$ \\
		\hline 
		4 &                       &                   &                   &                  &                  &                   &     1/0                   &   5     &       1/0                & 2 \\
		\hline
		5 &          5/5          &                   &                   &                  &                  &                   &     5/5                   &   15    &       1/1                & 3\\
		\hline
		6 &         12/18         &                   &                   &                  &                  &                   &     13/18                 &   37    &       3/2                & 6\\
		\hline
		7 &         35/49         &                   &                   &                  &                  &                   &     35/49                 &   91    &       4/5                & 10\\
		\hline
		8 &         40/80         &        8/8        &                   &                  &                  &                   &     49/88                 &   145   &       6/7                & 14\\ 
		\hline
		9 &         126/198       &                   &                   &                  &                  &                   &     126/198               &   333   &      10/14               & 25\\
		\hline
		10 &        140/280       &        20/20      &                   &                  &                  &                   &     161/300               &   471   &      13/18               & 32\\   
		\hline
		11 &        330/550       &                   &                   &                  &                  &                   &     330/550               &   891   &      20/30               & 51\\
		\hline
		12 &        228/480       &      60/96       &       12/12        &                  &                  &                   &     301/588               &   901   &      19/29               & 49\\
		\hline
		13 &       715/1235       &                  &                    &                  &                  &                   &     715/1235              &   1963  &      35/55               & 91\\
		\hline
		14 &       644/1246       &     112/168      &                    &                   &                  &                   &    757/1414             &   2185  &      37/58               & 96\\
		\hline
		15 &       1365/2415      &                  &                    &                  &                  &                    &    1365/2415            &   3795  &      53/87               & 141\\
		\hline
		16 &       1168/2240      &    208/336       &                    &                  &                  &                    &    1377/2576            &   3969  &      56/91               & 148\\
		\hline
		17 &       2380/4284      &                  &                    &                  &                  &                    &    2380/4284             &   6681 &      84/140              & 225\\
		\hline
		18 &       1512/3060      &    216/540       &       54/54        &     54/54        &                  &                    &    1837/3708             &   5563 &      65/114              & 179\\
		\hline
		19 &       3876/7068      &                  &                    &                  &                  &                    &    3876/7068             & 10963  &     120/204              & 325\\
		\hline
		20 &       3360/6480      &   480/800        &                    &                  &                  &                    &    3841/7280             & 11141  &      115/194             & 310\\
		\hline
		21 &     5985/11025      &                   &                    &                 &                   &                    &    5985/11025            & 17031  &     165/285              & 451\\
		\hline
		22 &       5280/10230    &   660/1100        &                    &                  &                  &                    &    5941/11330            & 17293  &    160/276               & 437\\
		\hline
		23 &     8855/16445      &                  &                     &                  &                  &                    &    8855/16445            & 25323  &     219/385              & 605\\
		\hline
		24 &     6144/12576     &   864/1656        &       264/336       &     24/24        &                  &                    &    7297/14592            &  21913 &    175/320               & 496\\
		\hline
		25 &    12650/23650     &                   &                     &                  &                  &                    &    12650/23650           & 36325  &    286/505               & 792\\
		\hline
		26 &    11284/21944      &   1196/2028       &                     &                  &                  &                    &    12481/23972           & 36479  &    276/489              &  766\\
		\hline
		27 &     17550/32994    &                   &                     &                   &                  &                    &   17550/32994            & 50571  &    364/650              & 1015\\
		\hline
		28 &    15680/30520     &    1568/2688      &                     &                   &                  &                    &   17249/33208            & 50485  &    346/625             & 972\\
		\hline
		29 &   23751/44863      &                   &                     &                   &                  &                    &   23751/44863           & 68643   &    455/819             & 1275\\
		\hline
		30 &    13800/29310     &    2250/4350     &     420/780         &      180/240      &    120/120       &    30/30           &   16801/34830            & 51661   &   319/611             & 931\\
		\hline   
	\end{tabular}}
\caption{A listing of $a_k(n)$, $\tilde{a}_k(n)$, $nbpts_{-}int(n)$, $nbpts_{-}{ext}(n)$, $nbpts(n)$, $nborb_{-}{int}(n)$, $nborb_{-}{ext}(n)$ and $nborb(n)$}
\label{table3}
\end{table}

We compute next the cpu-time ``$Time~[s]$'' needed to exceed $10^k$ intersection points, $k\in\{4,\ldots,8\}$ and to reach our maximum number of intersection points (approximatively equal to 280 millions which corresponds to a value $k_{max}\simeq 8,45$), the time spent to compute the external orbits "$Time_{-}ext$" and the internal orbits "$Time_{-}int$" as well as the value of $n$ needed to reach this amount of points.
\begin{table}[!ht]
	\centering
	\begin{tabular}{|c||*{6}{c|}}
		\hline
		$k$ & 4 & 5 & 6 & 7 & 8 & $k_{max}$\\
		\hline
		"$Time_{-}ext~[s]$" & 0.0926 & 0.4241 & 1.2276 & 6.8648 & 58.8097  & 222.6499\\
		\hline
		"$Time_{-}int~[s]$" & 0.0809 & 0.2462 & 0.7826 & 4.4920 & 27.5226 & 67.6834\\		
		\hline
		"$Time~[s]$"        & 0.1735 & 0.6703 & 2.0102 & 11.3568 & 86.3323 & 290.3334\\
		\hline
		$n$ & 19 & 33 & 55 & 97 & 171 & 220\\
		\hline
	\end{tabular}
\label{table4}
\caption{Time performances of \textit{Simuorb}}
\end{table}
Of course, the maximal number of intersection points mentioned in the table is reached only because of the limited capacities of our computer. We note that the time spent to treat the external orbits is shorter than the one to treat the internal orbits. We remark also that the time spent to generate the orbits through the different triple loops represents barely $1\%$ of the total time.


\end{document}